\newcommand\Autoref[1]{\@first@ref#1,@}
\def\@throw@dot#1.#2@{#1}
\def\@set@refname#1{
	\edef\@tmp{\getrefbykeydefault{#1}{anchor}{}}%
	\def\@refname{\@nameuse{\expandafter\@throw@dot\@tmp.@autorefname}s}%
}
\def\@first@ref#1,#2{%
	\ifx#2@\autoref{#1}\let\@nextref\@gobble
	\else%
	\@set@refname{#1}
	\@refname~\ref{#1}
	\let\@nextref\@next@ref
	\fi%
	\@nextref#2%
}
\def\@next@ref#1,#2{%
	\ifx#2@ and~\ref{#1}\let\@nextref\@gobble
	\else, \ref{#1}
	\fi%
	\@nextref#2%
}
\newcommand{\mynewtheorem}[2]{
  \newaliascnt{#1}{dummy}
  \newtheorem{#1}[#1]{#2}
  \aliascntresetthe{#1}
  \expandafter\def\csname #1autorefname\endcsname{#2}
}
\theoremstyle{plain}
\theoremstyle{definition}
\theoremstyle{remark}
\newcommand{\appendixref}[1]{\hyperref[#1]{Appendix}}
\tikzset{
    right angle quadrant/.code={
        \pgfmathsetmacro\quadranta{{1,1,-1,-1}[#1-1]}     
        \pgfmathsetmacro\quadrantb{{1,-1,-1,1}[#1-1]}},
    right angle quadrant=1, 
    right angle length/.code={\def\rightanglelength{#1}},   
    right angle length=2ex, 
    right angle symbol/.style n args={3}{
        insert path={
            let \p0 = ($(#1)!(#3)!(#2)$) in     
                let \p1 = ($(\p0)!\quadranta*\rightanglelength!(#3)$), 
                \p2 = ($(\p0)!\quadrantb*\rightanglelength!(#2)$) in 
                let \p3 = ($(\p1)+(\p2)-(\p0)$) in  
            (\p1) -- (\p3) -- (\p2)
        }
    }
}
  \DeclareMathOperator*{\relint}{relint}
  \DeclareMathOperator*{\Conv}{Conv}
  \DeclareMathOperator*{\affinespan}{aff}
\newcommand{\icl}[1]{{\color{black}{#1}}}
\begin{document}
\title{Stability and instability in saddle point dynamics \\Part II: The subgradient method}
\author{Thomas Holding\thanks{\color{black}Thomas Holding is with the \icl{Department of Mathematics, Imperial College London, United Kingdom; \tt\small t.holding@imperial.ac.uk}} and Ioannis~Lestas\thanks{Ioannis Lestas is with the Department of Engineering, University of Cambridge, Trumpington Street, Cambridge, CB2 1PZ, United Kingdom.
 \tt\small icl20@cam.ac.uk}}

\markboth{}%
{}

\maketitle
\begin{abstract}\color{black}
In part I we considered the problem of convergence to a saddle point of a concave-convex function via gradient dynamics and an exact characterization was given to their asymptotic behaviour.
In part II we consider a general class of subgradient dynamics that provide a restriction in an arbitrary convex domain.  
We show that despite the nonlinear and non-smooth character of these dynamics their $\omega$-limit set is comprised of solutions to only linear ODEs. In particular, we show that the latter are solutions to subgradient dynamics on affine subspaces which is a smooth class of dynamics the asymptotic properties of which have been exactly characterized in part I. Various convergence criteria are formulated using these results and several examples and applications are also discussed throughout the manuscript.
%
\end{abstract}

\begin{IEEEkeywords}
Nonlinear systems, subgradient dynamics, saddle points, non-smooth systems, networks, large-scale systems.
\end{IEEEkeywords}

\IEEEpeerreviewmaketitle

%

\section{Introduction}
\IEEEPARstart{I}{n} \cite{Holding-Lestas-gradient-method-Part-I} we studied the asymptotic behaviour of the gradient method when this is applied on a general concave-convex function in an unconstrained domain, and provided an exact characterization to its limiting solutions. Nevertheless,
in many applications, such as primal/dual algorithms in optimization problems,
it becomes necessary to constrain the system states in a prescribed convex set, e.g. positivity constraints on Lagrange multipliers or constraints on physical quantities like data flow, and prices/commodities in economics \cite{Hurwicz}, \cite{KMT}, \cite{SrikantB}, \cite{Paganini}.
The subgradient method is used in such cases, which is a version of
the gradient method 
with a projection term in the vector field additionally included,
so as to ensure that the trajectories do not leave the desired set.

In discrete time, there is an extensive literature on the subgradient method, via its application in optimization problems (see e.g. \cite{Nedic-Osdaglar-subgradient-method-discrete-time}). However, in many applications, for example power networks 
\cite{Power1, Power2, Power3, Power3b, devane2016distributed, kasis2017stability, stegink2017unifying, li2016connecting, mallada2017optimal}
  and classes of data network problems \cite{KMT},  \cite{SrikantB}, \cite{Paganini}, \cite{low1999optimization} continuous time models are considered. It is thus important to have a good understanding of the subgradient dynamics in a continuous time setting, which could also facilitate analysis and design by establishing links with other more abstract results in dynamical systems theory.

A main complication in the study of the subgradient method arises from the fact the this is a {\em non-smooth} {\color{black}system, i.e. a nonlinear ODE with} a discontinuous vector field due to the projections involved.
This prohibits the direct application of classical Lyapunov or LaSalle theorems (e.g. \cite{khalil}), which is reflected in the direct approach used by Arrow, Hurwicz and Uzawa in \cite{arrow} that avoids the use of such tools.
\icl{It has been identified from an early stage that the right-hand side of subgradient dynamics is monotone \cite{rockafellar1971saddle}, which is a property that can facilitate their analysis \cite{goebel2017stability}. This has been exploited to derive convergence results
that 
have primarily relied on appropriate strictness in the concave-convex property of the saddle function. The work in \cite{venets1985continuous} showed that such strictness is sufficient at the saddle points and for one of the two sets of variables of the concave-convex function,
 with an extension to non-smooth functions given in \cite{goebel2017stability}.

Various recent studies have also used tools from the analysis of hybrid and discontinuous systems to deduce related convergence results.}
%
The work of Feijer and Paganini \cite{Paganini}
proposed that the switching in the dynamics be interpreted in the framework of hybrid automata, using the \icl{invariance principle derived in \cite{Hybrid}.} However, as \icl{pointed} out in \cite{Cherukuri-primal-dual}, there are cases where the assumptions required in \cite{Hybrid} do not hold. In \cite{Cherukuri-primal-dual}, the \icl{invariance} principle for discontinuous Carath\'eodory systems is applied to prove convergence of the subgradient method under positivity constraints and the assumption of strict concavity. {\color{black}Further results on the asymptotic properties of the subgradient method under positivity constraints where derived in \cite{cherukuri2016role} where global convergence was also shown under a condition of local strict concavity-convexity.} In \cite{Richert-Cortes-Robust-distributed-linear-programming} the subgradient method is used to solve linear programs with inequality constraints. In general, proving convergence for the subgradient method even in simple cases, is a non-trivial problem that requires \color{black}{the non-smooth character of the system to be explicitly addressed.} 

Our aim in this paper is to provide a framework of results that allow one to study the asymptotic behaviour of the subgradient method 
{in a general setting, where the trajectories are constrained to an arbitrary convex domain, and the concave-convex function considered is not necessarily strictly concave-convex. One of our main results is to show that despite the nonlinear and non smooth character of the subgradient dynamics, their limiting behaviour \icl{when an equilibrium point exists}, are solutions to explicit {\em linear} differential equations.


{\color{black}{\color{black}In particular, we show} that these {\color{black}linear} ODEs
are {\color{black}limiting solutions of} subgradient dyanmics on an {\em affine subspace}, which is a class of dynamics that fit within the framework studied in Part~I \cite{Holding-Lestas-gradient-method-Part-I}. 
{\color{black}These} dynamics can therefore be exactly characterized, thus allowing to prove convergence to a saddle point for broad classes of problems.

The results in this paper are illustrated by means of examples that demonstrate also the complications in the dynamic behaviour of the subgradient method relative to the unconstrained gradient method. We also apply our results to modification schemes in network optimization, that provide convergence guarantees while maintaining a decentralized structure in the {\color{black}dynamics.}}


{\color{black}The methodology used for the derivations in the paper is also of independent technical interest. In particular, the notion of a face of a convex set is used to characterize the ODEs associated with the limiting behaviour of the subgradient dynamics. Furthermore, some more abstract results on corresponding semi-flows have been used to address the complications associated with the non-smooth character of subgradient dynamics.}

The paper is structured as follows. {\color{black}Section \ref{sec:Preliminaries} provides preliminaries from convex analysis and dynamical systems {\color{black}theory} that will be used within the paper.} The problem formulation is given in section \ref{sec:formulation} and the main results are presented in section \ref{sec:Main}, where various examples that illustrate those are also discussed. {\color{black}Applications to modification methods in network optimization are given in section \ref{sec:modification-methods}.
 The proofs of the results are given in appendices \ref{sec:proofs} and \ref{sec:proofs-of-examples} and an application to the problem of multipath routing is discussed in section~\ref{subsec:examples:multi-path-routing}.}
\section{Preliminaries}
\label{sec:Preliminaries}
We use the same notation and definitions {\color{black}as in} part I of this work \cite{Holding-Lestas-gradient-method-Part-I} and we refer the reader to the preliminaries section therein. {\color{black}The notions below from convex analysis and analysis of dynamical systems will additionally be used throughout the paper.}

\subsection{Convex analysis}
{\color{black}We recall first for convenience the following notions defined in part I \cite{Holding-Lestas-gradient-method-Part-I} that will be frequently used in this manuscript. For a closed convex set $K\subseteq\mathbb{R}^n$ and $\mathbf{z}\in\mathbb{R}^n$, we denote the normal cone to $K$ through $\mathbf{z}$ as $N_K(\mathbf{z})$. When $K$ is an affine space $N_K(\mathbf{z})$ is independent of $\mathbf{z}\in K$ and is denoted $N_K$. If $K$ is in addition non-empty, then we denote the projection of $\mathbf{z}$ onto $K$ as $\mathbf{P}_{K}(\mathbf{z})$. {\color{black}Also for vectors $x,y\in\mathbb{R}^n$, $d(x,y)$ denotes the Euclidean metric and $|x|$ the Euclidean norm.
}
}

\subsubsection{Concave-convex functions and saddle points}
For a function $\varphi$ that is concave-convex on $\mathbb{R}^{n+m}$ the (standard) notion of a saddle point was given in part I \cite{Holding-Lestas-gradient-method-Part-I}. We now consider $\varphi$ restricted to a non-empty closed convex set $K\subseteq\mathbb{R}^{n+m}$, in which case the notion of saddle point needs to be modified to incorporate the constraints.
\begin{definition}[Restricted saddle point]
Let $K\subseteq\mathbb{R}^{n+m}$ be non-empty closed and convex. For a concave-convex function $\varphi:K\to\mathbb{R}$, we say that $(\bar{x},\bar{y})\in K$ is a \textit{$K$-restricted saddle point} of $\varphi$ if for all $x\in\mathbb{R}^n$ and $y\in\mathbb{R}^m$ with $(x,\bar{y}),(\bar{x},y)\in K$ we have the inequality $\varphi(x,\bar{y})\le\varphi(\bar{x},\bar{y})\le\varphi(\bar{x},y)$.
\end{definition}
If in addition $\varphi\in C^1$ then $\mathbf{\bar{z}}=(\bar{x},\bar{y})\in K$ is a $K$-restricted saddle point \icl{if} the vector of partial derivatives 
 $(\varphi_x(\mathbf{\bar{z}}), \varphi_y(\mathbf{\bar{z}}))$
lies in the normal cone $N_K(\mathbf{\bar{z}})$.

Any $K$-restricted saddle point in the interior of $K$ is also a saddle point. If $C\subseteq K$ is closed and convex and $\mathbf{\bar{z}}\in C$ is a $K$-restricted saddle point, then $\mathbf{\bar{z}}$ is also a $C$-restricted saddle point.

\icl{It is common in the literature (e.g. \cite{rockafellar1970convex}) for the set $K$ to be the cartesian product of two convex sets, with $x,y$ taking values in each of these two sets respectively, i.e.
\begin{subequations}\label{eq:K_phi}
\begin{align}\label{set:K}
K&=K_x\times K_y, \ K_x\subset\mathbb{R}^n, \  K_y\subset\mathbb{R}^m, \\
&\qquad \qquad \qquad K_x,K_y \ \text{convex closed sets} \nonumber\\  \varphi&:K_x\times K_y\rightarrow\mathbb{R}, \ \varphi \text{ is concave-convex}
\end{align}
\end{subequations}

It should be noted that in this case if $\varphi\in C^1$, then $\mathbf{\bar{z}}=(\bar{x},\bar{y})\in K$ is a $K$-restricted saddle point {if and only if} the vector of partial derivatives $(\varphi_x(\mathbf{\bar{z}}), \varphi_y(\mathbf{\bar{z}}))$ lies in the normal cone $N_K(\mathbf{\bar{z}})$.

}

\icl{In general it} does not hold that if $\varphi:\mathbb{R}^{n+m}\to\mathbb{R}$ has a saddle point, and $K$ is closed convex and non-empty, then $\varphi$ has a $K$-restricted saddle {\color{black}point (an explicit} example {\color{black}illustrating} this is given later in \autoref{example:disappearance-of-saddle-points}(ii)). In this manuscript we will only consider cases where at least one $K$-restricted saddle point exists, leaving the problem of showing existence to the specific application.

\subsubsection{Concave programming}\label{sec:application-to-concave-optimization}
Concave programming (see e.g. \cite{Boyd}) is concerned with the study of optimization problems of the form
\begin{equation}\label{primal}
\max_{x\in C,g(x)\ge0} U(x)
\end{equation}
where $U:\mathbb{R}^n\to\mathbb{R}$, $g:\mathbb{R}^n\to\mathbb{R}^m$
are concave functions and $C\subseteq \mathbb{R}^n$ is non-empty closed and convex.
{\color{black}Under some mild assumptions, the solutions to such problems are saddle points of the Lagrangian}
\begin{equation}\label{lagrangian-form}
\varphi(x,y)=U(x)+y^T g(x)
\end{equation}
where $y\in\mathbb{R}^m_+$ are the Lagrange multipliers. {\color{black}This is stated in the Theorem below}.
\begin{theorem}
Let $g$ be concave and Slater's condition hold, i.e.
\begin{equation}\label{slater}
 \exists x'\in\relint C\text{ with }g(x')>0.
\end{equation}
Then $\bar{x}$ is an optimum of \eqref{primal} if and only if $\exists\bar{y}$ with $(\bar{x},\bar{y})$ a $C\times\mathbb{R}^{m}_+$-restricted saddle point of \eqref{lagrangian-form}.
\end{theorem}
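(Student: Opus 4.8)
The plan is to prove the two implications separately, noting that the backward direction is elementary while the forward direction is where Slater's condition does the real work. First I would unpack the restricted saddle point condition for the particular set $K=C\times\mathbb{R}^m_+$. The inequality $\varphi(\bar{x},\bar{y})\le\varphi(\bar{x},y)$ for all $y\in\mathbb{R}^m_+$ concerns the map $y\mapsto U(\bar{x})+y^Tg(\bar{x})$, which is affine; requiring $\bar{y}$ to minimise it over the nonnegative orthant forces $g(\bar{x})\ge0$ (primal feasibility, since otherwise the infimum is $-\infty$) together with the complementary slackness relation $\bar{y}^Tg(\bar{x})=0$. The remaining inequality $\varphi(x,\bar{y})\le\varphi(\bar{x},\bar{y})$ for all $x\in C$ says exactly that $\bar{x}$ maximises the Lagrangian $\varphi(\cdot,\bar{y})$ over $C$.

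For the implication that a restricted saddle point yields an optimum, I would argue by weak duality. Given any primal-feasible $x$ (so $x\in C$ and $g(x)\ge0$), nonnegativity of $\bar{y}$ gives $U(x)\le U(x)+\bar{y}^Tg(x)=\varphi(x,\bar{y})$, which by the first saddle inequality is at most $\varphi(\bar{x},\bar{y})$, and complementary slackness reduces the latter to $U(\bar{x})$. Hence $U(x)\le U(\bar{x})$ for every feasible $x$, so $\bar{x}$ is optimal. This direction uses neither concavity nor Slater's condition.

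The forward direction is the substantive one, and I would establish it by a separating hyperplane construction. The idea is to consider the set $\mathcal{A}=\{(u,v)\in\mathbb{R}\times\mathbb{R}^m:\exists\,x\in C,\ U(x)\ge u,\ g(x)\ge v\}$, whose convexity follows from concavity of $U$ and $g$ together with convexity of $C$. Optimality of $\bar{x}$ with value $p^*=U(\bar{x})$ means that $(p^*+\epsilon,0)\notin\mathcal{A}$ for every $\epsilon>0$, so $(p^*,0)$ lies on the boundary of $\mathcal{A}$; separating it from $\mathcal{A}$ produces a nonzero functional $(\mu,\lambda)$ with $\mu U(x)+\lambda^Tg(x)\le\mu p^*$ for all $x\in C$. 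Since $\mathcal{A}$ is unbounded below in each coordinate, finiteness of this supremum first forces $\mu\ge0$ and $\lambda\ge0$. The key step, and the main obstacle, is to rule out the degenerate case $\mu=0$: a vanishing multiplier on the objective would give $\lambda^Tg(x)\le0$ for all $x\in C$, contradicting the Slater point $x'\in\relint C$ with $g(x')>0$, since then $\lambda^Tg(x')>0$ whenever $\lambda\ge0$ is nonzero. With $\mu>0$ secured I would normalise and set $\bar{y}=\lambda/\mu\ge0$; evaluating the inequality $U(x)+\bar{y}^Tg(x)\le U(\bar{x})$ at $x=\bar{x}$ forces $\bar{y}^Tg(\bar{x})=0$, recovering complementary slackness, after which both saddle inequalities follow immediately. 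I expect the delicate points to be the careful application of the separation theorem at a boundary point and the verification that strict feasibility is precisely what excludes the degenerate multiplier $\mu=0$.
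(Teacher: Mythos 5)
Your proof is correct, but note that the paper itself offers no proof of this theorem: it is quoted in the preliminaries as a standard result of concave programming (with a citation to Boyd), so there is no in-paper argument to compare against. What you have written is the classical textbook derivation — weak duality plus complementary slackness for the easy direction, and the supporting-hyperplane argument applied to the achievable set $\mathcal{A}=\{(u,v):\exists\,x\in C,\ U(x)\ge u,\ g(x)\ge v\}$ for the converse, with Slater's condition excluding the degenerate multiplier $\mu=0$ — and every step checks out. Two small remarks: when you separate, you should invoke the supporting hyperplane theorem at the boundary point $(p^*,0)$ (or separate $(p^*,0)$ from $\interior\mathcal{A}$), since $(p^*,0)$ itself belongs to $\mathcal{A}$; and your argument only uses $x'\in C$ with $g(x')>0$, not the relative-interior refinement in \eqref{slater}, which is harmless here because all constraints are treated as strict.
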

The min-max optimization problem associated finding a {\color{black}$C\times\mathbb{R}^{m}_+$-restricted} saddle point of \eqref{lagrangian-form} is the dual problem of \eqref{primal}.

\subsubsection{Faces of convex sets}
Some of the main results of this manuscript refer to faces of a convex set. We refer the reader to \cite[Chap. 1.8.]{Handbook-convex-geometry} for further discussion of such topics.
\begin{definition}[Face of a convex set]\label{def:face}
Given a non-empty closed convex set $K$, a face $F$ of $K$ is a subset of $K$ that has both the following properties:
\begin{enumerate}[(i)]
\item $F$ is convex.
\item For any line segment $L\subseteq K$, if $(\relint L)\cap F\ne\emptyset$ then $L\subseteq F$.
\end{enumerate}
\end{definition}
For the readers convenience we recall some standard properties of faces:
\begin{enumerate}[(a)]
\item The intersection of two faces of $K$ is a face of $K$.
\item The empty set and $K$ itself are both faces of $K$. If a face $F$ is neither $\emptyset$ or $K$ it is called a proper face.
\item If $F$ is a face of $K$ and $F'$ is a face of $F$, then $F'$ is a face of $K$.
\item For a face $F$ of $K$, the normal cone $N_K(\mathbf{z})$ is independent of the choice of $\mathbf{z}\in\relint(F)$. In these cases we drop the $\mathbf{z}$ dependence and write it as $N_F$.
\item $K$ may be written as the disjoint union:
\begin{equation}
K=\bigcup\{\relint F:F\text{ is a face of }K\}.
\end{equation}
\end{enumerate}
Property (a) above leads to the following definition.
\begin{definition}[Minimal face containing a set]\label{def:minimal-face}
For a convex set $K$ and a subset $A\subseteq K$ we define the \emph{minimal face containing $A$} as
\begin{equation*}
\bigcap\{F:F \text{ is a face of }K\text{ and }A\subseteq F\}
\end{equation*}
which is a face by property (a) above.
\end{definition}

\subsection{Dynamical systems}
\begin{definition}[Flows and semi-flows]\label{def:semiflow}
A triple $(\phi,X,\rho)$ is a flow (resp. semi-flow) if $(X,\rho)$ is a metric space, $\phi$ is a continuous map from $\mathbb{R}\times X$ (resp. $\mathbb{R}_+\times X$) to $X$ which satisfies the two properties
\begin{enumerate}[(i)]
\item For all $x\in X$, $\phi(0,x)=x$.
\item For all $x\in X$, $t,s\in\mathbb{R}$ (resp. $\mathbb{R}_+$),
\begin{equation}
\phi(t+s,x)=\phi(t,\phi(s,x)).
\end{equation}
\end{enumerate}
When there is no confusion over which (semi)-flow is meant, we shall denote $\phi(t,x(0))$ as $x(t)$. For sets $A\subseteq\mathbb{R}$ (resp. $\mathbb{R}_+$) and $B\subseteq X$ we define $\phi(A,B)=\{\phi(t,x):t\in A,x\in B\}$.
\end{definition}
\icl{We say that a trajectory $x(t)$ of a semi-flow converges to a trajectory $y(t)$ of the semi-flow if $\rho(x(t)-y(t))\to0$ as $t\to\infty$.}

\begin{definition}[$\omega$-limit set]
Given a semi-flow $(\phi,X,\rho)$ its $\omega$-limit set is the set 
\begin{equation}
\Omega(\phi,X,\rho)=\bigcup_{x\in X}\bigcap_{t\ge0}\overline{\phi([t,\infty),x)}.
\end{equation}
where $\overline{A}$ denotes the closure of $A\subseteq X$ in $(X,\rho)$.
\end{definition}
\icl{
\begin{remark}
Note that a point $y$ is in the $\omega$-limit set of semi-flow $(\phi,X,\rho)$ if there exists a sequence $(t_n)_{n=\mathbb{N}}$  in $\mathbb{R}$ such that $\lim_{n\rightarrow\infty} t_n = \infty$ and $\lim_{n\rightarrow\infty} \phi(t_n,x)= y$ for some $x\in X$. Point $y$ is called an $\omega$-limit point of the solution $\phi(t,x)$.
\end{remark}
}
\begin{definition}[Invariant sets]
For a semi-flow $(\phi,X,\rho)$ we say that a set $A\subseteq X$ is positively invariant if $\phi(\mathbb{R}_+,A)\subseteq A$. If $\phi$ is also a flow we say that $A$ is negatively invariant if $\phi((-\infty,0],A)\subseteq A$. If $\phi(t,A)=A$ for all $t\in\mathbb{R}$ then we say $A$ is invariant.
\end{definition}
\begin{definition}[Sub-(semi)-flow]\label{def:sub-flow}
For a flow (resp. semi-flow) $(\phi,X,\rho)$ and an invariant (resp. positively invariant) set $A\subseteq X$ we obtain the sub-flow (resp. sub-semi-flow) by restricting $\phi(t,x)$ to act on $x\in A$ and denote it as $(\phi,A,\rho)$.
\end{definition}

\begin{definition}[Global convergence]\label{def:GlobConv}
We say that a (semi)-flow $(\phi,X,\rho)$ is \textit{globally convergent}, if for all initial conditions $x\in X$, the trajectory $\phi(t,x)$ converges to 
\icl{an equilibrium point} of $(\phi,X,\rho)$ as $t\to\infty$.
\end{definition}

In part I of this work much of the analysis relied on a specific {\color{black}form of stability, linked to incremental stability,} which we reproduce below for the convenience of the reader.
\begin{definition}[Pathwise stability]\label{def:pathwise-stability}
We say that a semi-flow $(\phi,X,\rho)$ is pathwise {\color{black}stable}
if for any two trajectories $x(t),x'(t)$ the distance $\rho(x(t),x'(t))$ is non-increasing in time.
\end{definition}
{\color{black}As it will be discussed in the paper, the  $\omega$-limit set of pathwise stable semiflows, is comprised of semiflows of the class defined below.}
\begin{definition}[(Semi)-Flow of isometries]\label{def:flow-of-isometries}
We say that a (semi)-flow $(\phi,X,\rho)$ is a (semi)-flow of isometries if for every $t\in \mathbb{R}$ (resp. $\mathbb{R}_+$), the function $\phi(t,\cdot):X\to X$ is an isometry, i.e. for all $x,y\in X$ it holds that $\rho(\phi(t,x),\phi(t,y))=\rho(x,y)$.
\end{definition}
{\color{black} Finally, we will need the notion} of Carath\'eodory solutions of differential equations.
\begin{definition}[Carath\'eodory solution]\label{def:Caratheodory-solution}
We say that a trajectory $\mathbf{z}(t)$ is a \textit{Carath\'eodory solution} to a differential equation $\dot{\mathbf{z}}=\mathbf{f}(\mathbf{z})$, if $\mathbf{z}$ is an absolutely continuous function of $t$, and for almost all times $t$, the derivative $\dot{\mathbf{z}}(t)$ exists and is equal to $\mathbf{f}(\mathbf{z}(t))$.
\end{definition}
\section{Problem formulation}
\label{sec:formulation}
The main object of study in this work is the \emph{subgradient method} on an arbitrary concave-convex function in $C^2$ and an arbitrary convex domain $K$. We first recall the definition of the \emph{gradient method}, which is studied in part I of this work \cite{Holding-Lestas-gradient-method-Part-I}.

\begin{definition}[Gradient method]
\label{gradmethod-definition}
Given $\varphi$ a $C^2$ concave-convex function on $\mathbb{R}^{n+m}$, we define the \textit{gradient method} as the flow on $(\mathbb{R}^{n+m},d)$ generated by the differential equation
\begin{equation}
\label{gradmethod-fullspace}
\begin{aligned}
\dot{x}&=\varphi_x\\
\dot{y}&=-\varphi_y.
\end{aligned}
\end{equation}
\end{definition}

The \emph{subgradient method} is obtained by restricting the gradient method to a convex set $K$ by the addition of a projection term to the differential equation \eqref{gradmethod-fullspace}.
\begin{definition}[Subgradient method]\label{def:subgradient-method}
Given a non-empty closed convex set $K\subseteq\mathbb{R}^{n+m}$ and a $C^2$ function $\varphi$ that is concave-convex on $K$, we define the \textit{subgradient method on $K$} as a semi-flow on $(K,d)$ consisting of Carath\'eodory {\color{black}solutions of}
\begin{equation}\label{gradmethod-convex-domain}
\begin{aligned}
\dot{\mathbf{z}}&=\mathbf{f}(\mathbf{z})-\mathbf{P}_{N_K(\mathbf{z})}(\mathbf{f}(\mathbf{z}))\\
\mathbf{f}(\mathbf{z})&=\icl{\begin{bmatrix}
                        \varphi_x\\ 
			-\varphi_y
                       \end{bmatrix}}.
\end{aligned}
\end{equation}
\end{definition}
by a transformation of coordinates. 

\icl{
The equilibrium points of the subgradient method on $K$ are $K$-restricted saddle points. If in addition the set $K$ is the cartesian product of two convex sets
as in \eqref{eq:K_phi} then the set of equilibrium points of the subgradient method on $K$ is equal to the set of $K$-restricted saddle points.}

\begin{remark}\label{rem:non-smoothness-of-subgradient-method}
For (non-affine) convex sets $K$ the subgradient method \eqref{gradmethod-convex-domain} is a \emph{non-smooth} system. The vector field is discontinuous due to the convex projection term, independently of the regularity of the function $\varphi$ or of the boundary of $K$. This is in contrast to the gradient method \eqref{gradmethod-fullspace}, which is a \emph{smooth} system, as it inherits the regularity of the function $\varphi$.
\end{remark}

We briefly summarise the contributions of this work in the bullet points below.
\begin{itemize}
\item 
    {\color{black}We show that the subgradient dynamics, despite being nonlinear and non-smooth, have an $\omega$-limit set that is comprised of solutions to only {\em linear} ODEs.}

\item
{\color{black}These solutions are
shown to belong to the $\omega-$limit set of the} subgradient method on \emph{affine subspaces}. {\color{black}This} links with part I \cite{Holding-Lestas-gradient-method-Part-I} of this two part work, where {\color{black}the limiting solutions of such systems have been exactly characterized.}
{\color{black}Based on this characterization of the limiting solutions, a convergence result for subgradient dynamics  is also presented.}
\item \icl{Various examples that illustrate the results in the paper are presented.}
    \icl{Applications are also provided to} 
    {modification methods in network optimization that provide convergence guarantees while maintaining a decentralized structure in the dynamics.
    An application to the problem of {\color{black}multi-path routing} is also discussed.}
\end{itemize}

\section{Main Results}
\label{sec:Main}
This section states the main results of the paper.
{\color{black}The results are divided into three subsections. To facilitate the readability of section \ref{sec:Main} we outline below the main Theorems that will be presented and the way these are related.

In \autoref{subsec:Main:faces} we consider pathwise stable semiflows,
an abstraction we use for the subgradient dynamcis in order to develop tools for their analysis that are valid despite their non-smooth character.
In particular,
\autoref{cor:pathwise-stable-plus-equilibrium-point-implies-isometries}
gives an invariance principle for such semi-flows, which applies without any smoothness assumption on the dynamics. We then additionally incorporate projections that constrain the trajectories within a closed convex set. Our key result, \autoref{prop:omega-limit-set-in-face}, says that for these semi-flows the {\em dynamics on the $\omega$-limit set are smooth}.


In \autoref{subsec:Main:subgradient-method} we apply these tools to the subgradient method \eqref{gradmethod-convex-domain}. In \autoref{thm:subgradient-method-faces} we show that the limiting solutions of the (non-smooth) subgradient method on a convex set are given by the dynamics of the (smooth) subgradient method on an {\em affine subspace}.
This allows us to obtain \autoref{cor:subgradient-method-face-criterion}, a criterion for global asymptotic stability of the subgradient method.

In \autoref{subsec:Main:gradmethod} we combine \autoref{thm:subgradient-method-faces} with the results of Part I of this work \cite{Holding-Lestas-gradient-method-Part-I} (for convenience of the reader reproduced in {\autoref{app:projected-gradient-method}}) to obtain a general convergence criterion (\autoref{thm:subgrad-method-final-convergence-criterion}) for the subgradient method.

These results are illustrated with examples throughout. The proofs of the results are given in appendix \ref{sec:proofs}.
}

\icl{
\subsection{Subgradient method on affine subspaces}\label{app:projected-gradient-method}
{In this section we recall} a result proved in part I of this work \cite{Holding-Lestas-gradient-method-Part-I} on the limiting solutions of the subgradient method on affine subspaces. 
To state this result we recall from \cite{Holding-Lestas-gradient-method-Part-I} the definition of the following matrices of partial derivatives of a concave-convex function $\varphi\in C^2$

\begin{equation}\label{def-of-A-and-B}
\begin{aligned}
\mathbf{A}(\mathbf{z})&=\begin{bmatrix}
0&\varphi_{xy}(\mathbf{z})\\
-\varphi_{yx}(\mathbf{z})&0
\end{bmatrix}\\
\mathbf{B}(\mathbf{z})&=\begin{bmatrix}
\varphi_{xx}(\mathbf{z})&0\\
0&-\varphi_{yy}(\mathbf{z})
\end{bmatrix}.
\end{aligned}
\end{equation}
{Consider the subgradient method \eqref{gradmethod-convex-domain} on an affine subspace $V$ with normal cone $N_V$
\begin{align}\label{eq:affine_proj}
\dot{\mathbf{z}}&=\mathbf{f}(\mathbf{z})-\mathbf{P}_{N_V}(\mathbf{f}(\mathbf{z}))\\
\mathbf{f}(\mathbf{z})&=\icl{\begin{bmatrix}
                        \varphi_x\\ 
			-\varphi_y
                       \end{bmatrix}}.\nonumber
\end{align}
Also let} $\mathbf{\Pi}\in\mathbb{R}^{(n+m)^2}$ be the orthogonal projection matrix onto the orthogonal complement of $N_V$. Then the ODE \eqref{eq:affine_proj} can be written as
\begin{equation}\label{eq:projected-gradient-method}
\dot{\mathbf{z}}
=\mathbf{\Pi}\mathbf{f}(\mathbf{z})
\end{equation}
The result is stated for $\mathbf{0}$ being an equilibrium point; the general case may be obtained by a translation of coordinates.
\begin{theorem}\label{thm:projected-gradient-method-result}
\cite[Theorem 25]{Holding-Lestas-gradient-method-Part-I}
Let $\mathbf{\Pi}\in\mathbb{R}^{(n+m)^2}$ be an orthogonal projection matrix, $\varphi$ be $C^2$ and concave-convex on $\mathbb{R}^{n+m}$, and $\mathbf{0}$ be an equilibrium point of \eqref{eq:projected-gradient-method}. Then the trajectories $\mathbf{z}(t)$ of \eqref{eq:projected-gradient-method} that lie a constant distance from any equilibrium point of \eqref{eq:projected-gradient-method} are exactly the solutions to the linear ODE:
\begin{equation}\label{eq:app1}
\dot{\mathbf{z}}(t)=\mathbf{\Pi}\mathbf{A}(\mathbf{0})\mathbf{\Pi}\mathbf{z}(t)
\end{equation}
that satisfy, for all $t\in\mathbb{R}$ and $r\in[0,1]$, the condition
\begin{equation}\label{eq:app2}
\!\!\mathbf{z}(t)\in\ker(\mathbf{\Pi}\mathbf{B}(r\mathbf{z}(t))\mathbf{\Pi})\cap \ker(\mathbf{\Pi}(\mathbf{A}(r\mathbf{z}(t))-\mathbf{A}(\mathbf{0}))\mathbf{\Pi})
\end{equation}
where $\mathbf{A}(\mathbf{z})$ and $\mathbf{B}(\mathbf{z})$ are defined by \eqref{def-of-A-and-B}.
\end{theorem}
\begin{remark}
In the remainder of this paper we show that subgradient dynamics on a general convex domain that have an equilibrium point, have an $\omega$-limit set that is comprised of solutions of subgradient dynamics on an only an affine subspace and form a flow of isometries. In particular, the $\omega$-limit set is comprised of solutions to explicit linear ODEs, of the form described in \autoref{thm:projected-gradient-method-result}, despite the subgradient dynamics being nonlinear and non-smooth.
\end{remark}
}
\subsection{Pathwise stability and convex projections}\label{subsec:Main:faces}
If one wishes to extend the results of Part I of this work \cite{Holding-Lestas-gradient-method-Part-I} to the subgradient method on a non-empty closed convex set $K\subseteq\mathbb{R}^{n+m}$, then one runs into two problems, both coming from the discontinuity of the vector field in \eqref{gradmethod-convex-domain}. The first is that the previously simple application of LaSalle's theorem would become much more technical - needing tools from non-smooth analysis. The second, more fundamental, problem is that LaSalle's theorem only gives convergence to a set of trajectories, and it remains to characterise this set. The trajectories in this set still satisfy an ODE with a discontinuous vector field, and we do not have uniqueness of the solution backwards in time - we {\color{black}still, though, have} a semi-flow.

To solve these issues we reinterpret the prior results in terms of a simple property which is still present in the subgradient method.

The main tool used to prove the results in \cite{Holding-Lestas-gradient-method-Part-I} was pathwise stability, (\autoref{def:pathwise-stability}), which says that the Euclidean distance between any two solutions is non-increasing with {\color{black}time (we will} later prove such a result for the subgradient method). {\color{black}Intuitively, one would expect that the distance between any two of the limiting solutions would be constant.
A more abstract way of saying this is that the sub-flow obtained by considering the gradient method with initial conditions in the $\omega$-limit set
is a \emph{flow of isometries}. In fact, this can be proved 
for any pathwise stable semi-flow, as stated in Proposition \ref{cor:pathwise-stable-plus-equilibrium-point-implies-isometries} below \icl{(proved in Appendix \ref{sec:isometry}).}}
\begin{proposition}\label{cor:pathwise-stable-plus-equilibrium-point-implies-isometries}
Let $(\phi,X,d)$ be a pathwise stable semi-flow (see \autoref{def:pathwise-stability}) with $X\subseteq\mathbb{R}^{n+m}$ which has an equilibrium point $\mathbf{\bar{z}}$. Let $\Omega$ be its $\omega$-limit set. Then the sub-semi-flow $(\phi,\Omega,d)$ (see \autoref{def:sub-flow}) defines a flow of isometries (see \autoref{def:flow-of-isometries}). Moreover, $\Omega$ is a convex set.
\end{proposition}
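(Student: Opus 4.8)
The plan is to reduce everything to the behaviour of single trajectories, upgrade ``distances are non-increasing'' to ``distances are constant'' by means of recurrence, and then exploit the Euclidean (Hilbert-space) structure both for the cross-trajectory isometry property and for convexity.

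First I would record the consequences of pathwise stability. Since $\bar{\mathbf z}$ is an equilibrium, applying pathwise stability to the pair $\phi(t,x),\bar{\mathbf z}$ shows that $t\mapsto d(\phi(t,x),\bar{\mathbf z})$ is non-increasing; hence every forward orbit is bounded, its closure is compact, and the individual limit set $\omega(x):=\bigcap_{t\ge0}\overline{\phi([t,\infty),x)}$ is non-empty, compact and invariant, with $d(\phi(t,x),\bar{\mathbf z})\to r(x)$ so that $\omega(x)$ lies on the sphere of radius $r(x)$ about $\bar{\mathbf z}$. I would then show that, for each fixed $x$, the sub-semi-flow on $\omega(x)$ is already a flow of isometries: each map $\phi(s,\cdot)$ sends $\omega(x)$ onto itself (writing $p=\lim\phi(t_n,x)$ with $t_n\to\infty$ and $s\ge0$, a convergent subsequence of the bounded sequence $\phi(t_n-s,x)$ has a limit $p'$ with $\phi(s,p')=p$), and a surjective non-expansive self-map of a compact metric space is an isometry; inverting these bijections extends the semi-flow on $\omega(x)$ to a genuine flow of isometries.

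The key device, and the main obstacle, is obtaining \emph{joint} recurrence for points coming from \emph{different} trajectories, since pathwise stability alone yields only convergent (not constant) distances. A flow of isometries on a compact metric space has relatively compact time-set inside the compact isometry group, so every point is recurrent; for $p\in\omega(a)$ and $q\in\omega(b)$ I would form the product flow on $\omega(a)\times\omega(b)$, which is again a flow of isometries on a compact space, whence $(p,q)$ is recurrent and there is a single sequence $\tau_n\to\infty$ with $\phi(\tau_n,p)\to p$ and $\phi(\tau_n,q)\to q$ simultaneously. With these common return times I would invoke the inner-product rigidity special to Euclidean space: the norms $|\phi(t,p)-\bar{\mathbf z}|=r(a)$ and $|\phi(t,q)-\bar{\mathbf z}|=r(b)$ are constant, so expanding $d(\phi(t,p),\phi(t,q))^2$ shows that $t\mapsto\langle\phi(t,p)-\bar{\mathbf z},\phi(t,q)-\bar{\mathbf z}\rangle$ is non-decreasing and bounded above by $r(a)r(b)$; evaluating along $\tau_n$ returns it to its value at $t=0$, forcing it to be constant, hence $d(\phi(t,p),\phi(t,q))=d(p,q)$ for all $p,q\in\Omega$. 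As these inner products relative to $\bar{\mathbf z}$ are preserved, $\phi(t,\cdot)$ extends to a one-parameter orthogonal group about $\bar{\mathbf z}$, and since each $\omega(x)$ (hence $\Omega$) is invariant, $(\phi,\Omega,d)$ is a flow of isometries.

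For convexity I would fix $p,q\in\Omega$, $\lambda\in[0,1]$, and test $c_\lambda=\lambda p+(1-\lambda)q$, which lies in $X$ (here using that the domain is convex). Non-expansiveness from time $0$ gives $|\phi(t,c_\lambda)-\phi(t,p)|\le(1-\lambda)\ell$ and $|\phi(t,c_\lambda)-\phi(t,q)|\le\lambda\ell$ with $\ell=|p-q|$, while the isometry property gives $|\phi(t,p)-\phi(t,q)|=\ell$; the triangle inequality then forces both bounds to be equalities, and strict convexity of the Euclidean norm forces $\phi(t,c_\lambda)=\lambda\phi(t,p)+(1-\lambda)\phi(t,q)$ exactly. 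Evaluating along the common return times yields $\phi(\tau_n,c_\lambda)\to c_\lambda$, so $c_\lambda\in\omega(c_\lambda)\subseteq\Omega$, and as $\lambda$ was arbitrary $\Omega$ is convex. I would flag that this final step genuinely uses convexity of the underlying domain $X$ (a union of concentric invariant circles through an equilibrium, under the rotation flow, is pathwise stable with non-convex $\omega$-limit set otherwise), which is harmless in the applications since there $X=K$ is closed and convex.
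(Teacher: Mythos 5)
Your proof is correct, and it reaches the conclusion by a genuinely more self-contained route than the paper. The paper obtains the flow structure and the recurrence on $\Omega$ by citing two external results --- Della Riccia's theorem that an equicontinuous semi-flow restricts to an equicontinuous flow of homeomorphisms on its $\omega$-limit set, and Ellis's equivalence of equicontinuity with uniform almost periodicity on the compact pieces $Y_R=\{\mathbf{z}(0)\in\Omega:\sup_{t}d(\mathbf{z}(t),\mathbf{\bar{z}})\le R\}$ --- and then, exactly as you do, upgrades the non-increasing distance to a constant one by letting it return to its initial value along recurrence times at $\pm\infty$. You instead rebuild these ingredients from scratch: the isometry property on each individual $\omega(x)$ from the classical fact that a surjective non-expansive self-map of a compact metric space is an isometry, and joint recurrence of pairs from the product flow on $\omega(a)\times\omega(b)$; your detour through the inner product $\langle\phi(t,p)-\mathbf{\bar{z}},\phi(t,q)-\mathbf{\bar{z}}\rangle$ is not actually needed (monotonicity plus recurrence of $d(\phi(t,p),\phi(t,q))$ itself already forces constancy, which is what the paper does), but it is correct and harmless. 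The more substantive divergence is the convexity step: the paper invokes an argument from Part I to assert that $\lambda\mathbf{z}(t)+(1-\lambda)\mathbf{z}'(t)$ is again a trajectory, which leans on the ODE structure of the semi-flow, whereas you extract the same identity purely metrically from equality in the triangle inequality combined with the already-established isometry on $\Omega$; this is cleaner and applies to any pathwise stable semi-flow on a convex domain, not just those generated by monotone vector fields. Finally, your observation that the convexity conclusion implicitly requires $\lambda p+(1-\lambda)q\in X$ (your concentric-circles example shows it can fail otherwise) is a genuine point: the paper's proof carries the same unstated hypothesis inside its appeal to Part I, and it is satisfied in every application since there $X=K$ is closed and convex.
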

Note here that $(\phi,\Omega,d)$ is a \emph{flow} rather than a \emph{semi-flow}. This comes from the simple observation that an isometry is always invertible, so we can define, for $t\ge0$, $\phi(-t,\cdot):\Omega\to\Omega$ as $\phi(t,\cdot)^{-1}$.
\begin{remark}\label{rem:backwards-flow-interpretation}
Care should be taken in interpreting the backwards flow given by \autoref{cor:pathwise-stable-plus-equilibrium-point-implies-isometries}. There could be multiple trajectories in $X$ that meet at a point in $y\in \Omega$ at time $t=0$, but exactly one of these trajectories will lie in $\Omega$ for all times $t\in\mathbb{R}$.
\end{remark}
We would like to note that we are not the first to make this observation. Indeed, we deduce this result from a more general result in \cite{Giacomo-equicontinuity} which was published in 1970.

\icl{It should be noted that if a pathwise stable semiflow 
has an equilibrium point then all its trajectories are bounded. 
This implies that each trajectory converges to its set of $\omega$-limit points \cite[Lemma 4.16]{meiss2007differential}.
\icl{The structure of the $\omega$-limit set can also be used to strengthen the convergence to the $\omega$-limit set to convergence to a solution in the $\omega$-limit set. This is stated as
\autoref{prop:converge_Omega} below (proved in Appendix \ref{sec:isometry}).
\begin{corollary}\label{prop:converge_Omega}
Let $(\phi,X,d)$ be a pathwise stable semi-flow with $X\subseteq\mathbb{R}^{n+m}$, which has an equilibrium point.
Then each trajectory of the semiflow 
converges to a trajectory in its $\omega$-limit set.
\end{corollary}}}

We consider \icl{now} pathwise stable differential equations which are projected onto a convex set, and make the following set of {\color{black}assumptions.}
 \begin{equation}\label{eq:projected-pathwise-stable-ODE}
 \begin{aligned}
 (\phi,K,d)\text{ is }&\text{the semi-flow of Carath\'eodory solutions of}\\
 \dot{\mathbf{z}}&=\mathbf{f}(\mathbf{z})-\mathbf{P}_{N_K(\mathbf{z})}(\mathbf{f}(\mathbf{z}))\text{ where,}\\
 K&\subseteq\mathbb{R}^{\icl{n+m}},\text{ is non-empty, closed and convex}\\
 C^1\ni\mathbf{f}:&K\to\mathbb{R}^{\icl{n+m}}\text{ satisfies, for all $\mathbf{z},\mathbf{w}\in K$,}\\
(\mathbf{f}&(\mathbf{z})-\mathbf{f}(\mathbf{w}))^T(\mathbf{z}-\mathbf{w})\le0.
 \end{aligned}
 \end{equation}
{\icl{
Functions $-\mathbf{f}(\mathbf{z})$  such that $\mathbf{f}(\mathbf{z})$ satisfies the final inequality in \eqref{eq:projected-pathwise-stable-ODE} are referred to as monotone.
A known result in the literature 
is the fact that the semi-flow in \eqref{eq:projected-pathwise-stable-ODE} is pathwise stable\footnote{\icl{In particular note that for any two trajectories $\mathbf{z}, \mathbf{z}'$
we have that $W(t)=\frac12|\mathbf{z}(t)-\mathbf{z}'(t)|^2$, satisfies for almost all times $t\ge0$ ,
$
\dot{W}(t)=(\mathbf{z}(t)-\mathbf{z}'(t))^T(\mathbf{\dot z}(t)-\mathbf{\dot z}'(t))\leq0
$
where the inequality follows from \eqref{eq:projected-pathwise-stable-ODE} and the definition of the normal cone.
}}, which is stated as Lemma \ref{Incremental-stability-is-preserved-by-convex-projection} below
\cite{goebel2017stability}, \cite{rockafellar1970convex}.
The monotonicity property of $-\mathbf{f}(\mathbf{z})$
allows also to deduce existence of unique solutions $\mathbf{z}:[0,\infty)\rightarrow \mathbb{R}^{n+m}$ \cite[Theorem~1]{brogliato2006equivalence} (see also \cite{goebel2017stability}, \cite{venets1985continuous}). }}

 \begin{lemma}\label{Incremental-stability-is-preserved-by-convex-projection}
 Let \eqref{eq:projected-pathwise-stable-ODE} hold. Then $(\phi,K,d)$ is pathwise stable.
 \end{lemma}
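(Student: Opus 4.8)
The plan is to show that $V(t):=\tfrac12\,|\mathbf{z}(t)-\mathbf{w}(t)|^2$ is non-increasing along any pair of trajectories $\mathbf{z}(t),\mathbf{w}(t)$ of $(\phi,K,d)$, which is exactly the claimed monotonicity of $d(\mathbf{z}(t),\mathbf{w}(t))$. Since $\mathbf{z}$ and $\mathbf{w}$ are Carath\'eodory solutions they are absolutely continuous, hence so is $V$, and it therefore suffices to prove $\dot V(t)\le 0$ at almost every $t$ (namely those $t$ at which both $\dot{\mathbf{z}}$ and $\dot{\mathbf{w}}$ exist and satisfy the differential equation in \eqref{eq:projected-pathwise-stable-ODE}); the non-increase of $V$ then follows from the fundamental theorem of calculus for absolutely continuous functions.

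At such a time, write $p_{\mathbf{z}}:=\mathbf{P}_{N_K(\mathbf{z})}(\mathbf{f}(\mathbf{z}))$ and $p_{\mathbf{w}}:=\mathbf{P}_{N_K(\mathbf{w})}(\mathbf{f}(\mathbf{w}))$, so that $\dot{\mathbf{z}}=\mathbf{f}(\mathbf{z})-p_{\mathbf{z}}$ and $\dot{\mathbf{w}}=\mathbf{f}(\mathbf{w})-p_{\mathbf{w}}$. Differentiating and substituting gives $\dot V=(\mathbf{z}-\mathbf{w})^T(\mathbf{f}(\mathbf{z})-\mathbf{f}(\mathbf{w}))-(\mathbf{z}-\mathbf{w})^T(p_{\mathbf{z}}-p_{\mathbf{w}})$. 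The first term is $\le 0$ directly from the monotonicity hypothesis $(\mathbf{f}(\mathbf{z})-\mathbf{f}(\mathbf{w}))^T(\mathbf{z}-\mathbf{w})\le 0$ in \eqref{eq:projected-pathwise-stable-ODE}, so the whole argument reduces to controlling the projection term.

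The projection term is handled purely by the defining property of the normal cone, and this is the conceptual heart of the statement that a convex projection cannot destroy pathwise stability. The key observation is that $p_{\mathbf{z}}$, being the projection of $\mathbf{f}(\mathbf{z})$ onto $N_K(\mathbf{z})$, is itself an element of $N_K(\mathbf{z})$; since $\mathbf{w}\in K$, the definition of the normal cone yields $p_{\mathbf{z}}^T(\mathbf{w}-\mathbf{z})\le 0$, i.e. $p_{\mathbf{z}}^T(\mathbf{z}-\mathbf{w})\ge 0$. By the symmetric argument with the roles of $\mathbf{z}$ and $\mathbf{w}$ exchanged, $p_{\mathbf{w}}\in N_K(\mathbf{w})$ together with $\mathbf{z}\in K$ gives $p_{\mathbf{w}}^T(\mathbf{z}-\mathbf{w})\le 0$. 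Adding these two inequalities yields $(\mathbf{z}-\mathbf{w})^T(p_{\mathbf{z}}-p_{\mathbf{w}})\ge 0$, whence $-(\mathbf{z}-\mathbf{w})^T(p_{\mathbf{z}}-p_{\mathbf{w}})\le 0$ and therefore $\dot V\le 0$ almost everywhere.

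I expect the only genuine subtlety, rather than a hard obstacle, to be the passage from the classical to the Carath\'eodory setting: one must take care that $V$ is differentiated only at the a.e.\ times where both trajectories are differentiable and obey the ODE, that the substitution of the vector field is made only there, and that a non-positive a.e.\ derivative of an absolutely continuous function does force that function to be non-increasing. Every other ingredient is elementary, resting solely on the monotonicity assumption and on the fact that the subtracted term is, by construction, a vector in the normal cone.
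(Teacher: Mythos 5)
Your proof is correct and follows essentially the same route as the paper's: differentiate $\tfrac12|\mathbf{z}-\mathbf{w}|^2$ almost everywhere, kill the first term by the monotonicity hypothesis in \eqref{eq:projected-pathwise-stable-ODE}, and kill the two projection terms by the defining inequality of the normal cone. You merely spell out more explicitly the normal-cone sign argument and the absolute-continuity bookkeeping that the paper leaves implicit.
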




Our main result on projected differential equations \icl{as in \eqref{eq:projected-pathwise-stable-ODE}} is that, even though the projection term gives a discontinuous vector field, when we restrict our attention to the $\omega$-limit set, the vector field is $C^1$. This allows us to replace \emph{non-smooth analysis} with \emph{smooth analysis} when studying the asymptotic behaviour of such systems.
\begin{theorem}\label{prop:omega-limit-set-in-face}
Let \eqref{eq:projected-pathwise-stable-ODE} hold and assume that the semi-flow $(\phi,K,d)$ has an equilibrium point. Let $\Omega$ be its $\omega$-limit set. Then $(\phi,\Omega,d)$ defines a flow of isometries given by solutions to the following differential equation, which has a $C^1$ vector field,
\begin{equation}\label{eq:projected-ODE-on-face}
\mathbf{\dot{z}}=\mathbf{f}(\mathbf{z})-\mathbf{P}_{N_V}(\mathbf{f}(\mathbf{z})).
\end{equation}
Here $V$ is the affine span of the (unique) minimal face (see \autoref{def:minimal-face}) of $K$ that contains the set of equilibrium points of the {\color{black}semi-flow.}
\end{theorem}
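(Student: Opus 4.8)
The plan is to combine the two facts already in force for \eqref{eq:projected-pathwise-stable-ODE} — pathwise stability (\autoref{Incremental-stability-is-preserved-by-convex-projection}) and, via \autoref{cor:pathwise-stable-plus-equilibrium-point-implies-isometries}, that $(\phi,\Omega,d)$ is a flow of isometries on a closed convex set $\Omega$ containing the equilibrium set $E$ — with the convex geometry of faces. Throughout I write $\mathbf P(\mathbf z)=\mathbf P_{N_K(\mathbf z)}(\mathbf f(\mathbf z))\in N_K(\mathbf z)$, let $F$ be the minimal face of $K$ containing $E$ (well defined by property (a), see also \autoref{def:minimal-face}), and $V=\affinespan F$ with direction space $W$, so $N_V=W^{\perp}$. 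A preliminary observation I record is that $N_F\subseteq N_V$: for $\mathbf w_0\in\relint F$ every $\mathbf n\in N_F=N_K(\mathbf w_0)$ satisfies $\mathbf n^{T}(\mathbf w-\mathbf w_0)\le0$ for $\mathbf w\in F$ moving in both directions along $W$, which forces $\mathbf n\perp W$.

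First I would extract the pointwise equalities forced by the isometry property. For a trajectory $\mathbf z(\cdot)$ in $\Omega$ and any $\bar{\mathbf z}\in E$ the distance $|\mathbf z(t)-\bar{\mathbf z}|$ is constant, so $\tfrac{d}{dt}\tfrac12|\mathbf z-\bar{\mathbf z}|^{2}=0$. Writing this out with \eqref{eq:projected-pathwise-stable-ODE} gives a sum of three terms, each $\le0$: $(\mathbf z-\bar{\mathbf z})^{T}(\mathbf f(\mathbf z)-\mathbf f(\bar{\mathbf z}))\le0$ by monotonicity, $(\mathbf z-\bar{\mathbf z})^{T}\mathbf f(\bar{\mathbf z})\le0$ since $\mathbf f(\bar{\mathbf z})\in N_K(\bar{\mathbf z})$, and $-(\mathbf z-\bar{\mathbf z})^{T}\mathbf P(\mathbf z)\le0$ since $\mathbf P(\mathbf z)\in N_K(\mathbf z)$. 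As the sum vanishes each term vanishes; the one I will use is $(\mathbf z-\bar{\mathbf z})^{T}\mathbf P(\mathbf z)=0$ for all $\bar{\mathbf z}\in E$ (for a.e.\ $t$).

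The crux — and what I expect to be the main obstacle — is showing $\Omega\subseteq F$ (hence $\Omega\subseteq V$), and here I would use the structure of the isometry flow. Fix $\mathbf z\in\Omega$; if $\mathbf z\in E$ we are done, so assume its orbit $O(\mathbf z)$ is nontrivial. Since each $\phi(t,\cdot)$ extends to a rigid motion of $\affinespan\Omega$ and the orbit lies on a sphere about any fixed $\bar{\mathbf z}\in E$, the time average $\mathbf c=\lim_{T\to\infty}\tfrac1{2T}\int_{-T}^{T}\phi(s,\mathbf z)\,ds$ exists, is fixed by the flow, and lies in $\Omega$ (closed and convex); hence $\mathbf c\in E$. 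Moreover $\mathbf c$ is the orthogonal projection of $\mathbf z$ onto the fixed-point set, i.e.\ the centre of the orbit, so it lies in the relative interior of $\Conv\overline{O(\mathbf z)}\subseteq\Omega$. Choosing $\mathbf z''\in\Conv\overline{O(\mathbf z)}\subseteq K$ slightly beyond $\mathbf c$ on the ray from $\mathbf z$ places $\mathbf c\in\relint[\mathbf z,\mathbf z'']$ with $[\mathbf z,\mathbf z'']\subseteq K$; since $\mathbf c\in\relint F_{\mathbf c}$ for its minimal face $F_{\mathbf c}$, property (ii) of \autoref{def:face} gives $[\mathbf z,\mathbf z'']\subseteq F_{\mathbf c}\subseteq F$, whence $\mathbf z\in F$. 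The delicate points are the existence and relative-interiority of the orbit average (a compactness and time-averaging argument, using that the orbit affinely spans its own rotation plane) and its reduction to the face property; this is exactly where convexity of $\Omega$ and the isometry property are both indispensable.

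With $\Omega\subseteq F\subseteq V$ in hand the rest is routine. For the normal term, the equality $(\mathbf z-\bar{\mathbf z})^{T}\mathbf P(\mathbf z)=0$ shows every $\bar{\mathbf z}\in E$ lies in the exposed face $H=\{\mathbf w\in K:\mathbf P(\mathbf z)^{T}(\mathbf w-\mathbf z)=0\}$, so $F\subseteq H$ and therefore $\mathbf P(\mathbf z)\in N_K(\mathbf w)=N_F$ for $\mathbf w\in\relint F$; by the preliminary observation $\mathbf P(\mathbf z)\in N_V=W^{\perp}$. On the other hand $\Omega\subseteq V$ forces $\dot{\mathbf z}\in W$. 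Then $\mathbf f(\mathbf z)=\dot{\mathbf z}+\mathbf P(\mathbf z)$ is the orthogonal decomposition of $\mathbf f(\mathbf z)$ along $W\oplus W^{\perp}$, so $\mathbf P(\mathbf z)=\mathbf P_{N_V}(\mathbf f(\mathbf z))$ and the dynamics on $\Omega$ reduce to \eqref{eq:projected-ODE-on-face}. Since $N_V$ is a fixed subspace, $\mathbf P_{N_V}$ is a constant linear map and $\mathbf f\in C^{1}$, so this vector field is $C^{1}$; the flow-of-isometries claim is then immediate from \autoref{cor:pathwise-stable-plus-equilibrium-point-implies-isometries}, and uniqueness of the minimal face is property (a).
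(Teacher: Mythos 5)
Your argument is correct in outline but takes a genuinely different route from the paper's, and the comparison is worth recording. The paper's proof defines $F$ as the minimal face containing $\Omega$, proves orthogonality of the normal term $\mathbf{n}(t)=\mathbf{P}_{N_K(\mathbf{z}(t))}(\mathbf{f}(\mathbf{z}(t)))$ to $F$ by pairing $\mathbf{z}(\cdot)$ with \emph{other trajectories} in $\Omega$ whose convex combination lands in $\relint F$ (this is exactly what \autoref{lem:C-intersects-relint-F} supplies), and only afterwards, in a separate step, shows by contradiction that this $F$ coincides with the minimal face containing the equilibria, again via time-averaging of almost periodic trajectories. You instead pair $\mathbf{z}(\cdot)$ only with \emph{equilibria} $\bar{\mathbf{z}}$, extract $(\mathbf{z}-\bar{\mathbf{z}})^{T}\mathbf{P}(\mathbf{z})=0$, and feed this into an exposed-face argument ($E\subseteq H$, hence $F\subseteq H$, hence $\mathbf{P}(\mathbf{z})\in N_F\subseteq N_V$). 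This buys you something real: because equilibria are constant trajectories, there is only one null set of bad times (that of $\mathbf{z}(\cdot)$ itself), so the paper's Step 1.2 — the countable dense subset $\Omega'$ and the union of measure-zero sets — becomes unnecessary. Your identification of the face is also more direct than the paper's Step 2, since you prove $\Omega\subseteq F$ in one pass rather than reconciling two candidate faces.

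The one place where your proposal is materially thinner than it needs to be is the claim that the orbit average $\mathbf{c}$ lies in $\relint\Conv\overline{O(\mathbf{z})}$, which you flag but do not prove. This is true, but it carries most of the weight of your $\Omega\subseteq F$ step and requires: (a) that the isometries of $\Omega$ extend to a one-parameter group of rigid motions of $\affinespan\Omega$ which, having bounded orbits, is a rotation group about a fixed affine subspace; (b) that the orbit closure is uniquely ergodic (it is a minimal equicontinuous flow, conjugate to a rotation on a compact abelian group), so the time average equals the barycenter of the fully supported invariant measure; and (c) the standard fact that the barycenter of a fully supported probability measure lies in the relative interior of the convex hull of its support. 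Note that the paper's own Step 2 gets away with a strictly weaker conclusion from the same averaging device — it only needs the average of a trajectory spending a positive fraction of time in $F\setminus F'$ to stay a distance $\delta$ from $F'$ — which is why it can avoid (b) and (c). If you supply the barycenter argument explicitly (or replace it by the paper's \autoref{lem:C-intersects-relint-F} applied to $A=\overline{O(\mathbf{z})}\cup E$), your proof is complete; the remaining steps (the sign decomposition of $\tfrac{d}{dt}\tfrac12|\mathbf{z}-\bar{\mathbf{z}}|^{2}$, the inclusion $N_F\subseteq N_V$, and the orthogonal splitting $\mathbf{f}=\dot{\mathbf{z}}+\mathbf{P}(\mathbf{z})$ along $W\oplus W^{\perp}$) are all sound.
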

\icl{The proof of \autoref{prop:omega-limit-set-in-face} is provided in Appendix \ref{sec:isometry}.}
\begin{remark}
The existence of a minimal face of $K$ that contains the set of equilibrium points is a simple consequence of the definition of a face (see \autoref{def:face} and the discussion that follows). \icl{The significance of minimal face flows was also noted in \cite{zhang1995stability}, where these have been used as a tool to deduce local stability properties for projected dynamical systems. In  \autoref{prop:omega-limit-set-in-face} we show that such flows can provide a characterization to the $\omega$-limit set of dynamical systems that are pathwise stable.}
\icl{Noting also that \eqref{eq:projected-ODE-on-face} is a dynamical system on an affine subspace,  \autoref{thm:projected-gradient-method-result} can be used to provide a characterization to the $\omega$-limit set of subgradient dynamics as linear ODEs, as it will be discussed in the next section.}
\end{remark}

}
\subsection{\color{black} The subgradient method}\label{subsec:Main:subgradient-method}
We now apply \icl{\autoref{prop:omega-limit-set-in-face}} to the subgradient method. Our first result reduces the study of the convergence on general convex domains, where the subgradient method is non-smooth, to the study of convergence of the subgradient method on affine spaces, {\color{black}which is a smooth dynamical system studied in \cite{Holding-Lestas-gradient-method-Part-I}. We also show that when an internal saddle point exists then the limiting behaviour of the subgradient method is determined by that of the corresponding unconstrained gradient method.}

As in part I of this work \cite{Holding-Lestas-gradient-method-Part-I}, given a concave-convex function $\varphi$ we define {\color{black} the following
\begin{itemize}
\item $\bar{\mathcal{S}}$ is the set of saddle points of $\varphi$
 \item $\mathcal{S}$ 
is the set of solutions to the {\em gradient method \eqref{gradmethod-fullspace}} (i.e. no projections included) that lie a constant distance from any saddle point.
\end{itemize}
}
\begin{theorem}\label{thm:subgradient-method-faces}
\icl{Let function $\varphi$ be $C^2$, and concave-convex on a set $K\subseteq\mathbb{R}^{n+m}$ as defined in \eqref{eq:K_phi}, and let $\varphi$ have a $K$-restricted saddle point.}
Let $(\phi,K,d)$ denote the subgradient method \eqref{gradmethod-convex-domain} on $K$ and $\Omega$ be its $\omega$-limit set. 
Then $\Omega$ is convex, and $(\phi,\Omega,d)$ defines a flow of isometries. {\color{black}Furthermore, the following hold:}

\begin{enumerate}[(i)]

\item 
The trajectories $\mathbf{z}(t)$ of $(\phi,\Omega,d)$ solve the ODE:
\begin{equation}\label{eq:subgradient-method-limiting-solutions}
\dot{\mathbf{z}}=\mathbf{f}(\mathbf{z})-\mathbf{P}_{N_V}(\mathbf{f}(\mathbf{z})),
\end{equation}
{\color{black} where $V$ is the affine span of $F$, with $F$ being the minimal face containing all $K$-restricted saddle points.}

\item
{\color{black} {\color{black}If there exists a saddle point of $\varphi$ in the interior of $K$, then}}
\begin{equation}\label{eq:omega-is-S-cap-K}
\Omega=\{\mathbf{z}(t)\in\mathcal{S}:\mathbf{z}(\mathbb{R})\subseteq K\}.
\end{equation}
{\color{black}where $\mathcal{S}$ is as defined before the theorem statement.}
\end{enumerate}
\end{theorem}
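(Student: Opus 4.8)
The plan is to realise the subgradient method as an instance of the abstract projected dynamics \eqref{eq:projected-pathwise-stable-ODE} and to read off the preamble and part (i) from \autoref{prop:omega-limit-set-in-face}, doing the genuine work only in part (ii) by pinning down the face. For the first step I would verify the three hypotheses of \eqref{eq:projected-pathwise-stable-ODE}: the field $\mathbf f$ of \eqref{gradmethod-convex-domain} is $C^1$ because $\varphi\in C^2$; $K$ is non-empty, closed and convex by assumption; and the monotonicity $(\mathbf f(\mathbf z)-\mathbf f(\mathbf w))^T(\mathbf z-\mathbf w)\le 0$ is exactly the concavity-convexity inequality already recorded as holding after \eqref{eq:projected-pathwise-stable-ODE}. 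Since $\varphi$ has a $K$-restricted saddle point, and these coincide with the equilibria of the subgradient method, the semi-flow has an equilibrium, so \autoref{prop:omega-limit-set-in-face} applies and yields at once that $\Omega$ is convex, that $(\phi,\Omega,d)$ is a flow of isometries, and that its trajectories solve \eqref{eq:projected-ODE-on-face} with $V=\affinespan F$ for $F$ the minimal face of $K$ containing the equilibria. Identifying the equilibria with the $K$-restricted saddle points turns this into part (i).

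For part (ii) the key observation is that an interior saddle point collapses the face onto all of $K$. A saddle point $\bar{\mathbf z}^\ast\in\interior K$ has vanishing gradient, hence $\mathbf f(\bar{\mathbf z}^\ast)=0\in N_K(\bar{\mathbf z}^\ast)$, so it is a $K$-restricted saddle point and lies in $F$. Since $\bar{\mathbf z}^\ast\in\interior K\subseteq\relint K$, the face-partition property (property (e) above) shows that the unique face whose relative interior contains $\bar{\mathbf z}^\ast$, equivalently the minimal face containing it, is $K$ itself; as $F$ contains $\bar{\mathbf z}^\ast$ this forces $F=K$ and therefore $V=\affinespan K=\mathbb R^{n+m}$. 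Then $N_V=\{0\}$, the projection in \eqref{eq:subgradient-method-limiting-solutions} vanishes, and on $\Omega$ the dynamics reduce to the unconstrained gradient method $\dot{\mathbf z}=\mathbf f(\mathbf z)$.

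With this reduction \eqref{eq:omega-is-S-cap-K} is proved by two inclusions. For $\subseteq$, every orbit of $(\phi,\Omega,d)$ is a gradient-method solution contained in $K$; since $\bar{\mathbf z}^\ast\in\Omega$ is fixed and $(\phi,\Omega,d)$ is a flow of isometries, each orbit stays a constant distance from $\bar{\mathbf z}^\ast$, and combining the monotonicity of the distance to any saddle point under the (pathwise stable) gradient flow with the fact that an isometric flow on the compact $\omega$-limit set $\Omega$ is pointwise recurrent, this distance is in fact constant for every saddle point, so the orbit lies in $\mathcal S$. For $\supseteq$, I would take a gradient trajectory $\mathbf z(\cdot)\in\mathcal S$ with $\mathbf z(\mathbb R)\subseteq K$ and first argue that it is genuinely a subgradient trajectory: remaining in $K$ forces the velocity $\mathbf f(\mathbf z(t))$ into the tangent cone $N_K(\mathbf z(t))^\circ$ at every time, whence $\mathbf P_{N_K(\mathbf z(t))}(\mathbf f(\mathbf z(t)))=0$ and $\mathbf z(\cdot)$ solves \eqref{gradmethod-convex-domain}; then, since $\mathcal S$-trajectories lie in a flow of isometries with bounded orbits and are therefore recurrent, each $\mathbf z(t)$ is an $\omega$-limit point of its own forward subgradient orbit and hence lies in $\Omega$.

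The main obstacle is this reverse inclusion in part (ii), and within it the two non-smooth and dynamical points: showing that an $\mathcal S$-trajectory which merely remains in $K$ is actually a Carath\'eodory solution of the \emph{discontinuous} equation \eqref{gradmethod-convex-domain} — which rests on the viability argument that staying in $K$ annihilates the normal component of $\mathbf f$ — and establishing the recurrence needed to promote such a trajectory into the $\omega$-limit set. By contrast, the preamble and part (i) are essentially bookkeeping on top of \autoref{prop:omega-limit-set-in-face}, and the collapse $F=K$ is a short convex-geometry argument.
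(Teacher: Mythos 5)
Your proposal is correct and follows essentially the same route as the paper: both reduce everything to \autoref{prop:omega-limit-set-in-face} (checking the monotonicity hypothesis in \eqref{eq:projected-pathwise-stable-ODE} via pathwise stability of the gradient field), read off the preamble and part (i) directly, and prove part (ii) by noting that an interior saddle point forces the minimal face to be $K$ itself so that $N_V=\{\mathbf{0}\}$ and the projection term vanishes. The only difference is one of detail: the paper simply asserts that \eqref{eq:omega-is-S-cap-K} then holds, whereas you spell out the two inclusions (constancy of distances to saddle points via recurrence on $\Omega$, and the viability-plus-recurrence argument for the reverse inclusion), which is a faithful expansion of what the paper leaves implicit.
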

\icl{The proof of \autoref{thm:subgradient-method-faces} is provided in Appendix \ref{sec:subgradient-method}.}

{\color{black}
\begin{remark}
The ODE \eqref{eq:subgradient-method-limiting-solutions} is the subgradient method on the affine subspace $V$. {\color{black}A main significance of \autoref{thm:subgradient-method-faces} is the fact that {\color{black}the solutions of \eqref{eq:subgradient-method-limiting-solutions}} in $\Omega$ can be characterized using the results in part I \cite{Holding-Lestas-gradient-method-Part-I}. In particular, it follows
from Theorem \ref{thm:projected-gradient-method-result} in {section \ref{app:projected-gradient-method}} that these
satisfy explicit linear ODEs.
This therefore shows that even though the subgradient dynamics are nonlinear and nonsmooth their $\omega$-limit set is comprised of solutions to only {\em linear} ODEs (stated in  \autoref{cor:subgradient_linear}).}
\end{remark}

\begin{remark}
{\color{black}Later}, in \autoref{subsec:Main:gradmethod} we use
the results in \cite{Holding-Lestas-gradient-method-Part-I} {\color{black}on the subgradient} method on affine subspaces \icl{(section \ref{app:projected-gradient-method})}
together with \autoref{thm:subgradient-method-faces} to obtain a convergence criterion for the subgradient method. This is used subsequently to give proofs for the applications considered in \autoref{sec:modification-methods}.
\end{remark}

\begin{remark}
{\color{black}
  It will be discussed in the proof  of \autoref{thm:subgradient-method-faces} that \autoref{thm:subgradient-method-faces}(ii) is a special case
  of \autoref{thm:subgradient-method-faces}(i) where the projection term in \eqref{eq:subgradient-method-limiting-solutions} equal to zero.}
In \autoref{thm:subgradient-method-faces}(ii) 
there is a simple characterization of the \icl{$\omega$-limit set} of the subgradient method, as just the limiting solutions of the corresponding {\em gradient method} that lie in $K$.  Note  that the set $\mathcal{S}$ in \eqref{eq:omega-is-S-cap-K} was exactly {\color{black}characterized in \cite{Holding-Lestas-gradient-method-Part-I}.}
\end{remark}
\begin{remark}
 A simple consequence of \eqref{eq:omega-is-S-cap-K} is the fact if there exists a saddle point in the interior of $K$ then the subgradient method is globally convergent if the corresponding unconstrained gradient method is globally convergent.
 \end{remark}}
\icl{
\begin{remark}
 \autoref{thm:subgradient-method-faces} follows directly from \autoref{prop:omega-limit-set-in-face}. Hence \autoref{thm:subgradient-method-faces} also holds when $K$ is an arbitrary closed convex set as in  \autoref{prop:omega-limit-set-in-face} (rather than just the cartesian product of two convex sets),
if the subgradient method has an equilibrium point. Note that  a saddle point in the interior of $K$,  
or a $K$-restricted saddle point with $K$ as in \eqref{eq:K_phi}, is always an equilibrium point of the subgradient method on~$K$.
\end{remark}
}

%

We now present several examples to illustrate the application of \autoref{thm:subgradient-method-faces} in some simple cases.
%

The first example corresponds to a case where the unconstrained gradient method \eqref{gradmethod-fullspace} is globally convergent, but the subgradient method is not.
\begin{example}\label{example:projections-break-convergence}
Define the concave-convex function
\begin{equation}
\varphi(x_1,x_2,y)=-\frac12|x_1|^2 + (x_1+x_2)y
\end{equation}
{\color{black}where $x_1,x_2,y\in\mathbb{R}$}.
This has a single saddle point at $(0,0,0)$, and {\color{black}$\varphi$ is the Lagrangian of} the optimisation problem
\begin{equation}
\max_{x_1+x_2=0}-\frac12|x_1|^2
\end{equation}
where {\color{black}variable $y$ in function $\varphi$ is the Lagrange multiplier associated with the constraint.} 
 On this function the gradient method is the linear system
\begin{equation}
\begin{bmatrix}
\dot{x}_1\\\dot{x}_2\\\dot{y}
\end{bmatrix}=\begin{bmatrix}
-1 & 0&1\\
0 & 0&1\\
-1 &-1 &0
\end{bmatrix}
\begin{bmatrix}
x_1\\x_2\\y
\end{bmatrix}.
\end{equation}
It is easily verified that all the eigenvalues of this matrix lie in the left half plane, so that the gradient method is globally convergent. Now consider the family of convex sets defined by
\begin{equation}\label{eq:K_a}
K_a=\{(x_1,x_2,y)\in\mathbb{R}^3:x_1\ge a\}
\end{equation}
for $a\in\mathbb{R}$. The subgradient method on $K_a$ is given by the system
\begin{equation}\label{eq:subgradient-method-on-F_a1}
\begin{aligned}
\dot{x}_1&=[-x_1+y]_{x_1-a}^+\\
\dot{x}_2&=y\\
\dot{y}&=-x_1-x_2.
\end{aligned}
\end{equation}
The convergence of the subgradient method on $K_a$ depends crucially on the value of $a$. There are three cases:
\begin{enumerate}[(i)]
\item $a<0$: In this case the saddle point $(0,0,0)$ lies in the interior of $K_a$ so that \autoref{thm:subgradient-method-faces}(ii) applies, and as the unconstrained gradient method is globally convergent, so is the subgradient method on $K_a$.
\item $a>0$: Here the unconstrained saddle point $(0,0,0)$ lies outside $K_a$. A simple computation shows that the point {\color{black}$(a,-a,0)$} is the only $K_a$-restricted saddle point. {\color{black}\autoref{thm:subgradient-method-faces}(i) can be used here}. The only proper face of $K_a$ is the set
\begin{equation}\label{eq:F_a}
F_a=\{(a,x_2,y):x_2,y\in\mathbb{R}\}.
\end{equation}
The subgradient method on $F_a$ is the system
\begin{equation}\label{eq:KaODE}
\begin{bmatrix}
\dot{x_2}\\\dot{y}
\end{bmatrix}
=
\begin{bmatrix}
0&1\\
-1&0\\
\end{bmatrix}
\begin{bmatrix}
x_2\\y
\end{bmatrix}
+\begin{bmatrix}
0\\-a
\end{bmatrix}
\end{equation}
together with the equality $x_1=a$. This matrix has imaginary eigenvalues $\pm i$, showing that the subgradient method on $F_a$ is not globally convergent.
{\color{black}It is easy to verify that some of these oscillatory solutions are also solutions of the subgradient method on $K_a$, \icl{e.g. $y(t)=a\cos(t)$, $x_2(t)=-a(1-\sin(t))$, $x_1(t)=a$ satisfy 
\eqref{eq:subgradient-method-on-F_a1}}. Therefore the subgradient method on $K_a$ is not globally convergent when $a>0$.}

\item $a=0$: In this case the saddle point $(0,0,0)$ lies on the boundary of $K_0$. {\color{black}\autoref{thm:subgradient-method-faces}(i) applies}, and the analysis of the subgradient method on $F_0$ is the same as in case (ii) above. However, when we check whether any oscillatory solutions of the subgradient method on $F_0$ are also solutions of the subgradient method on $K_0$, we find that there are no such solutions. Indeed, {\color{black}for a trajectory to be} a solution {\color{black}to both the} subgradient method on $F_0$ and the subgradient method on $K_0$ we must have both $x_1=a=0$ and $-x_1+y\le 0$ by \eqref{eq:subgradient-method-on-F_a1}. Then \eqref{eq:subgradient-method-on-F_a1} implies that $y=0$ and then that $x_1=0$. So the only such solution is the saddle point. Therefore the subgradient method {\color{black}on $K_0$ is} globally convergent.
\end{enumerate}
This shows that the subgradient method on $K_a$ undergoes a bifurcation at $a=0$.
\end{example}
The following example illustrates that the subgradient method can be globally convergent when the gradient method is not.
\begin{example}\label{example:disappearance-of-saddle-points}
Define the concave-convex function
\begin{equation}
\varphi(x_1,x_2,y)=-\frac{1}{2}|x_2|^2+x_1y.
\end{equation}
This has a single saddle point at $(0,0,0)$ and corresponds to the optimisation problem
\begin{equation}\label{eq:example-disappearance-optimisation-problem}
\max_{x_1=0}-\frac{1}{2}|x_2|^2
\end{equation}
where the constraint is relaxed via the Lagrange multiplier $y$. The gradient method applied to $\varphi$ is the linear system
\begin{equation}
\begin{bmatrix}
\dot{x}_1\\\dot{x}_2\\\dot{y}
\end{bmatrix}=\begin{bmatrix}
0 & 0&1\\
0 & -1&0\\
-1 &0 &0
\end{bmatrix}
\begin{bmatrix}
x_1\\x_2\\y
\end{bmatrix}
\end{equation}
whose matrix has eigenvalues $-1,\pm i$ so the gradient method is not globally convergent. We again consider the subgradient method on the closed convex set $K_a$ defined by \eqref{eq:K_a} for $a\in\mathbb{R}$ splitting into three cases:
\begin{enumerate}[(i)]
\item $a<0$: As in \autoref{example:projections-break-convergence}(i) the saddle point $(0,0,0)$ lies in the interior of $K_a$. As the unconstrained gradient method is not globally convergent, \autoref{thm:subgradient-method-faces}(ii) implies that the subgradient method on $K_a$ is also not globally convergent.
\item $a>0$: The subgradient method on $K_a$ is given by
\begin{equation}\label{eq:subgradient-method-on-K_a}
\begin{aligned}
\dot{x}_1&=[y]_{x_1-a}^+\\
\dot{x}_2&=-x_2\\
\dot{y}  &=-x_1
\end{aligned}
\end{equation}
The saddle point $(0,0,0)$ lies outside $K_a$. For $(\bar{x}_1,\bar{x}_2,\bar{y})$ to be a $K_a$-restricted saddle point, \eqref{eq:subgradient-method-on-K_a} implies that $\bar{x_1}=\bar{x}_2=0$, but this is impossible in $K_a$, so there are no $K_a$-restricted saddle points. This can also be understood in terms of the optimisation problem \eqref{eq:example-disappearance-optimisation-problem} which has empty feasible set if we impose the further condition that $x_1\ge a>0$. This means that none of our results apply, but a direct analysis of \eqref{eq:subgradient-method-on-K_a} shows that $\dot{y}\le -a<0$ so that $y(t)\to -\infty$ as $t\to\infty$, and the system is not globally convergent.
\item $a=0$: Solving \eqref{eq:subgradient-method-on-K_a} for the $K_0$-restricted saddle points yields the continuum $\{(0,0,y):y\le 0\}$. None of these lie in the interior of $K_0$, {so \color{black}\autoref{thm:subgradient-method-faces}(ii) does not apply and \autoref{thm:subgradient-method-faces}(i) is used to analyze the asymptotic behaviour}. The only proper face of $K_0$ is $F_0$ defined by \eqref{eq:F_a}. On $F_0$, the subgradient method is the system
\begin{equation}
\begin{bmatrix}
\dot{x}_2\\\dot{y}
\end{bmatrix}
=
\begin{bmatrix}
-1&0\\
0&0
\end{bmatrix}
\begin{bmatrix}
x_2\\y
\end{bmatrix}
\end{equation}
together with the equality $x_1=0$. \icl{This} is globally convergent, \icl{since for all initial conditions $x_2$ converges to $0$ and hence we have convergence to a point in the set $\{(0,0,y):y\in\mathbb{R}\}$, which is the set of $F_0$-restricted saddle points} . Therefore the subgradient method on $K_0$ is also globally convergent.
\end{enumerate}
So in this case the subgradient method on $K_a$ starts non-convergent for $a<0$, becomes globally convergent for $a=0$ and finally looses all its equilibrium points when $a>0$.
\end{example}
Although the minimal face $F$ in \autoref{thm:subgradient-method-faces}(i) is given as the intersection of all faces that contain $K$-restricted saddle points, it can be useful to obtain convergence criteria that do not depend upon knowledge of all $K$-restricted saddle points. We note that if the subgradient method is globally convergent on any affine span of a face of $K$, then global convergence is implied.
\begin{corollary}\label{cor:subgradient-method-face-criterion}
\icl{Let function $\varphi$ be $C^2$, and concave-convex on a set $K\subseteq\mathbb{R}^{n+m}$ as defined in \eqref{eq:K_phi}.}
Let $\varphi$ have a $K$-restricted saddle point. Assume that, for any face $F$ of $K$ that contains a $K$-restricted saddle point, the subgradient method on $\affinespan(F)$ is globally convergent. Then the subgradient method on $K$ is globally convergent.
\end{corollary}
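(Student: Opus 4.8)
The plan is to reduce the problem to a single, explicitly identified face of $K$ and then exploit the rigidity of flows of isometries. First I would take $F$ to be the minimal face of $K$ containing the set of $K$-restricted saddle points (this face exists and is unique by \autoref{def:minimal-face}), set $V=\affinespan(F)$, and observe that since $F$ contains a $K$-restricted saddle point (one exists by hypothesis) the assumed global convergence applies to the subgradient method on $V$; this is the only face to which I need the hypothesis. Writing $(\phi,K,d)$ for the subgradient method on $K$ and $\Omega$ for its $\omega$-limit set, \autoref{thm:subgradient-method-faces} supplies three facts: $\Omega$ is convex, $(\phi,\Omega,d)$ is a flow of isometries, and—by part (i)—every trajectory in $\Omega$ solves \eqref{eq:subgradient-method-limiting-solutions}, that is, it is a trajectory of the subgradient method on exactly this $V$. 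Hence on $\Omega$ the (a priori non-smooth) dynamics agree with the globally convergent flow on $V$, and it remains only to show that $\Omega$ consists entirely of equilibria.

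The core step is to prove that every trajectory lying in $\Omega$ is constant. I would carry this out on the $\omega$-limit set $\omega(\mathbf{z}_0)$ of an arbitrary single trajectory, which is compact and positively invariant and is contained in $\Omega$; its compactness comes for free because trajectories are bounded (by pathwise stability, \autoref{Incremental-stability-is-preserved-by-convex-projection}, the distance to a fixed $K$-restricted saddle point is non-increasing). Fix $q\in\omega(\mathbf{z}_0)$ and write $\mathbf{z}(t)=\phi(t,q)$, a trajectory of the globally convergent subgradient method on $V$. Global convergence gives $\dist(\mathbf{z}(t),E)\to0$, where $E$ is the (closed) set of equilibria of the subgradient method on $V$; choosing $t_n\to\infty$ and using boundedness I may pass to a subsequence with $\mathbf{z}(t_n)\to p$, so that $p\in E$ and, since $\omega(\mathbf{z}_0)$ is closed and positively invariant, $p\in\omega(\mathbf{z}_0)\subseteq\Omega$. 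As $p$ is an equilibrium we have $\phi(t,p)=p$, and the isometry property on $\Omega$ then yields $d(\mathbf{z}(t_n),p)=d(\phi(t_n,q),\phi(t_n,p))=d(q,p)$ for every $n$; letting $n\to\infty$ forces $d(q,p)=0$, i.e. $q=p$. Thus $q$ is an equilibrium, and $\omega(\mathbf{z}_0)$ contains only equilibria.

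The delicate point—what I expect to be the main obstacle—is precisely this last deduction: global convergence only guarantees approach to the \emph{set} $E$, not to a single equilibrium, so constancy cannot be read off directly. The resolution is to first locate a genuine limit point $p$ inside the compact set $\omega(\mathbf{z}_0)$ and only then invoke the equality of distances; this is where it is essential that $\phi(t,\cdot)$ acts as an \emph{isometry} on $\Omega$ rather than as a mere pathwise-stable map, since pathwise stability alone would give only the inequality $d(\mathbf{z}(t_n),p)\le d(q,p)$, which is insufficient. To finish, I would note that each equilibrium $p$ found above lies in $K$ and is fixed by $\phi$, hence is an equilibrium of $(\phi,K,d)$, i.e. a $K$-restricted saddle point. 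Therefore $\omega(\mathbf{z}_0)$ consists of $K$-restricted saddle points for every initial condition $\mathbf{z}_0$; since bounded trajectories approach their $\omega$-limit set, $\phi(t,\mathbf{z}_0)$ approaches the set of $K$-restricted saddle points as $t\to\infty$, which is exactly the asserted global convergence of the subgradient method on $K$.
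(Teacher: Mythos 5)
Your proof is correct and follows the route the paper intends: the corollary is presented as an immediate consequence of \autoref{thm:subgradient-method-faces}, and the paper gives no explicit proof of it beyond the preceding remark. Your argument supplies exactly the details that are glossed over --- in particular, that global convergence on $V$ only gives convergence to the \emph{set} of equilibria, and that the isometry property of $(\phi,\Omega,d)$ (applied after extracting a limit point $p$ inside the compact, positively invariant $\omega$-limit set) is what forces every point of $\Omega$ to coincide with an equilibrium --- and you correctly note that the hypothesis is only needed for the single minimal face, consistent with the paper's remark that the corollary quantifies over all faces merely because the minimal one is not known a priori.
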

\begin{example}\label{example:faces-corresponding-to-positivity-constraints}
To illustrate this result, let us consider the case of positivity constraints, where $(x,y)$ are restricted to $K=\mathbb{R}_+^n\times \mathbb{R}_+^m$. Here the faces of $K$ are given by sets of the form
\begin{equation*}
\{(x,y)\in\mathbb{R}^n_+\times\mathbb{R}^m_+:x_i=0,y_j=0\text{ for }i\not\in I,j\not\in J\}
\end{equation*}
where $I\subseteq\{1,\dotsc,n\}$ and $J\subseteq\{1,\dotsc,m\}$ are sets of indices. The affine span of such a face is then given by
\begin{equation}\label{eq:faces-of-positive-orthant}
\{(x,y)\in\mathbb{R}^{n+m}:x_i=0,y_j=0\text{ for }i\not\in I,j\not\in J\}.
\end{equation}
Thus, by \autoref{cor:subgradient-method-face-criterion}, checking convergence of the subgradient method in this case may be done by checking convergence of the gradient method with any arbitrary set of coordinates fixed as zero\footnote{This result was presented previously by the authors in \cite{Holding-Lestas-CDC2015}.}.
\end{example}
In some cases the faces of the constraint set $K$ have an interpretation in terms of the specific problem.
\begin{example}\label{ex:opt}
Consider the optimisation problem
\begin{equation}\label{eq:example-remove-constraints-optimisation-problem}
\max_{g_j(x)\ge0, j\in\{1,\dotsc,m\}}U(x)
\end{equation}
where $U,g_j:\mathbb{R}^n\to\mathbb{R}$ are concave functions in $C^2$. This is associated with the Lagrangian
\begin{equation}
\varphi(x,y)=U(x)+\sum_{j\in\{1,\dotsc,m\}}y_jg_j(x)
\end{equation}
where $y\in\mathbb{R}^m$ is a vector of Lagrange multipliers\footnote{For simplicity of presentation we shall assume throughout the example that there is no duality gap in the problems considered.}. To ensure that the Lagrange multipliers are non-negative we define the constraint set $K=\mathbb{R}^n\times\mathbb{R}_+^m$. As in \autoref{example:faces-corresponding-to-positivity-constraints} the affine spans of the faces of $K$ are given by \eqref{eq:faces-of-positive-orthant} for $I=\{1,\dotsc,m\}$ and $J$ any subset of $\{1,\dotsc,m\}$. The subgradient method applied on such a face corresponds to the gradient method on the modified Lagrangian
\begin{equation}
\varphi'(x,y)=U(x)+\sum_{j\in J}y_jg_j(x)
\end{equation}
which is associated with the modified optimisation problem
\begin{equation}\label{eq:example-remove-constraints-optimisation-problem-modified}
\max_{g_j(x)=0,j\in J}U(x)
\end{equation}
where, compared to \eqref{eq:example-remove-constraints-optimisation-problem}, the inequality constraints are replaced by equality constraints, and some subset of the constraints are removed.

If $\varphi$ is concave-convex on $\mathbb{R}^{n+m}$
{\color{black}then} \autoref{cor:subgradient-method-face-criterion} applies. We obtain that the subgradient method on $K$ applied to $\varphi$ is globally convergent, if, for any $J\subseteq\{1,\dotsc, m\}$, the gradient method applied to the Lagrangian $\varphi'$ corresponding to the modified optimisation problem \eqref{eq:example-remove-constraints-optimisation-problem-modified} is globally {\color{black}convergent.}


\end{example}

\subsection{A general convergence criterion}\label{subsec:Main:gradmethod}
By combining \autoref{thm:subgradient-method-faces} with the results on the limiting solutions of the (smooth) subgradient method on affine subspaces given in \cite{Holding-Lestas-gradient-method-Part-I} \icl{(recalled in section \ref{app:projected-gradient-method})} we obtain the following convergence criterion for the subgradient method on arbitrary convex sets and arbitrary concave-convex functions. This states that the subgradient method is globally convergent, if it has no trajectory satisfying an explicit linear ODE.


The theorem is stated under the assumption that $\mathbf{0}\in K$ is a $K$-restricted saddle point. The general case is obtained by a translation of coordinates.
\begin{theorem}\label{thm:subgrad-method-final-convergence-criterion}
\icl{Let function $\varphi$ be $C^2$, and concave-convex on a set $K\subseteq\mathbb{R}^{n+m}$ as defined in \eqref{eq:K_phi}, and let $\mathbf{0}\in K$ be a $K$-restricted saddle point of $\varphi$.}
Let $F$ be the minimal face of $K$ that contains all $K$-restricted saddle points and {\color{black}let $V$ be the affine span of $F$.  Let $\mathbf{\Pi}$ be the orthogonal projection matrix onto the orthogonal complement of $N_V$.}
{\color{black}Let also  $\mathbf{A}(.)$ and $\mathbf{B}(.)$ be the matrices defined in \eqref{def-of-A-and-B}.}

{\color{black}Then if the subgradient method \eqref{gradmethod-convex-domain} on $K$ applied to $\varphi$ has no non-constant trajectory $\mathbf{z}(t)$ that satisfies both the following
\begin{enumerate}[(i)]
\item
the linear ODE
\begin{equation}\label{eq:projected-gradient-method-linear-ODE}
\dot{\mathbf{z}}(t)=\mathbf{\Pi}\mathbf{A}(\mathbf{0})\mathbf{\Pi}\mathbf{z}(t)
\end{equation}
\item
for all $r\in[0,1]$ {\color{black}and $t\in\mathbb{R}$,}
\begin{equation}\label{eq:first-thm-kernel-condition-projected-gradient-method}
\!\!\mathbf{z}(t)\in\ker(\mathbf{\Pi}\mathbf{B}(r\mathbf{z}(t))\mathbf{\Pi})\cap \ker(\mathbf{\Pi}(\mathbf{A}(r\mathbf{z}(t))-\mathbf{A}(\mathbf{0}))\mathbf{\Pi}),
\end{equation}
\end{enumerate}
then the subgradient method is globally convergent.}

%
\end{theorem}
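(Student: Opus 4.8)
The plan is to reduce global convergence of the non-smooth subgradient method on $K$ to the already-characterized limiting behaviour of the \emph{smooth} subgradient method on the affine subspace $V$, and then to read off the explicit linear description supplied by Part~I. I would first record the preliminary facts: $\mathbf{0}$ is an equilibrium of \eqref{gradmethod-convex-domain} (being a $K$-restricted saddle point), the dynamics are pathwise stable by \autoref{Incremental-stability-is-preserved-by-convex-projection} (the monotonicity inequality in \eqref{eq:projected-pathwise-stable-ODE} holds for \eqref{gradmethod-convex-domain}), and hence $d(\phi(t,\mathbf{z}_0),\mathbf{0})$ is non-increasing. This makes every orbit bounded, so its closure is compact, its $\omega$-limit set is non-empty and contained in the global $\omega$-limit set $\Omega$, and the orbit converges to its own $\omega$-limit set; thus it suffices to show that $\Omega$ contains no non-constant trajectory. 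Note also that $\mathbf{0}\in F\subseteq V$, so $V$ is a linear subspace, $\mathbf{\Pi}=I-\mathbf{P}_{N_V}$ is the orthogonal projection onto $V$, and the linearization data $\mathbf{A}(\mathbf{0}),\mathbf{B}(\mathbf{0})$ are evaluated precisely at the saddle point.

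The key reduction is \autoref{thm:subgradient-method-faces}(i): every trajectory $\mathbf{z}(t)$ of the sub-flow $(\phi,\Omega,d)$ is a flow of isometries and solves \eqref{eq:subgradient-method-limiting-solutions}, i.e. $\dot{\mathbf{z}}=\mathbf{f}(\mathbf{z})-\mathbf{P}_{N_V}(\mathbf{f}(\mathbf{z}))=\mathbf{\Pi}\mathbf{f}(\mathbf{z})$, which is the subgradient method on the affine subspace $V$. Since this $V$-dynamics is smooth and $\mathbf{0}\in V$ is a saddle point of it, I would invoke the Part~I characterization reproduced as \autoref{thm:projected-gradient-method-result}. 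The flow-of-isometries property places the $\Omega$-trajectories in the class of constant-distance-from-saddle solutions to which that theorem applies, and it identifies any such non-constant solution as one satisfying both the linear ODE \eqref{eq:projected-gradient-method-linear-ODE} with matrix $\mathbf{\Pi}\mathbf{A}(\mathbf{0})\mathbf{\Pi}$ and the kernel membership \eqref{eq:first-thm-kernel-condition-projected-gradient-method}; intuitively the $\mathbf{B}$ term and the nonlinear part of $\mathbf{A}$ are suppressed along the trajectory, leaving only the constant linearized skew part $\mathbf{\Pi}\mathbf{A}(\mathbf{0})\mathbf{\Pi}$. Hence every non-equilibrium trajectory in $\Omega$ satisfies both (i) and (ii).

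The conclusion is then pure bookkeeping. By hypothesis the subgradient method on $K$ admits no non-constant trajectory satisfying both (i) and (ii); together with the previous paragraph this forces $\Omega$ to consist entirely of equilibria. Since each orbit converges to its $\omega$-limit set and that set lies in $\Omega$, the distance from every trajectory to the set of equilibrium points tends to zero, which is exactly global convergence.

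I expect the main obstacle to be the matching carried out in the middle paragraph: one must verify that the flow-of-isometries trajectories in the $\omega$-limit set of the subgradient method on $K$ are genuinely of the type for which \autoref{thm:projected-gradient-method-result} applies to the $V$-system, and that the structural output of that result translates exactly into \eqref{eq:projected-gradient-method-linear-ODE} and \eqref{eq:first-thm-kernel-condition-projected-gradient-method}. Checking that $\mathbf{\Pi}=I-\mathbf{P}_{N_V}$ conjugates the projected field $\mathbf{\Pi}\mathbf{f}$ into the matrices $\mathbf{\Pi}\mathbf{A}(\mathbf{0})\mathbf{\Pi}$ and $\mathbf{\Pi}\mathbf{B}(\cdot)\mathbf{\Pi}$, and keeping the evaluation point $\mathbf{0}$ consistent throughout, is the routine but essential glue.
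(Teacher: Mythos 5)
Your proposal is correct and follows essentially the same route as the paper: apply \autoref{thm:subgradient-method-faces}(i) to reduce the $\omega$-limit dynamics to the smooth subgradient method on $V$, use the isometry property to place those trajectories in the constant-distance class of \autoref{thm:projected-gradient-method-result}, and conclude that the hypothesis forces $\Omega$ to consist only of equilibria. The extra details you supply (boundedness of orbits from pathwise stability, and the explicit check that $\mathbf{\Pi}=I-\mathbf{P}_{N_V}$ turns \eqref{eq:subgradient-method-limiting-solutions} into \eqref{eq:projected-gradient-method}) are exactly the glue the paper uses implicitly.
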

\icl{The proof of \autoref{thm:subgrad-method-final-convergence-criterion} is provided in Appendix \ref{sec:conv_subgradient-method_proof}.}
\begin{remark}\label{rem:proj}
Although the condition \eqref{eq:first-thm-kernel-condition-projected-gradient-method} appears difficult to verify, it is only necessary to show that the condition \emph{does not} hold {\color{black}(by non-trivial trajectories)} in order to prove global convergence. This turns out to be easy in many cases, for example in the proofs of the convergence of the modification methods {\color{black}discussed in section \ref{sec:modification-methods}} (\autoref{thm:convergence-of-modification-methods}). \icl{In particular, these are examples where global convergence is desired to a saddle point without knowing the saddle points a priori
and without function $\varphi$ being strictly concave convex. The derivations exploit the structure of the matrices $\mathbf{A}, \mathbf{B}, \mathbf{\Pi}$ to prove that  \eqref{eq:projected-gradient-method-linear-ODE} and  \eqref{eq:first-thm-kernel-condition-projected-gradient-method} are satisfied only by saddle points.}
\end{remark}
{\color{black}
\begin{remark}\label{rem:OmLinear}
It should be noted that \eqref{eq:projected-gradient-method-linear-ODE} and  \eqref{eq:first-thm-kernel-condition-projected-gradient-method} are satisfied by all trajectories $\mathbf{z}(t)$ in the $\omega$-limit set of the subgradient method. This follows from Theorem \ref{thm:subgradient-method-faces} and Theorem~\ref{thm:projected-gradient-method-result} and is stated in the corollary below \icl{(proved in Appendix \ref{sec:conv_subgradient-method_proof})}.
\end{remark}
\begin{corollary}\label{cor:subgradient_linear}
Consider the subgradient method \eqref{gradmethod-convex-domain} and let  $\mathbf 0$ be a $K$-restricted saddle point.
Then any trajectory $\mathbf{z}(t)$ in the $\omega$-limit set satisfies \eqref{eq:projected-gradient-method-linear-ODE} and  \eqref{eq:first-thm-kernel-condition-projected-gradient-method}, i.e. it is a solution of a linear ODE.
\end{corollary}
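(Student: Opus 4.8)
The plan is to obtain this corollary as a direct combination of \autoref{thm:subgradient-method-faces} with the characterization of the limiting solutions of the subgradient method on an affine subspace established in Part~I (reproduced here as \autoref{thm:projected-gradient-method-result}). Together these two results reduce the non-smooth problem on $K$ to a smooth linear problem on an affine subspace, and the corollary is essentially the statement that this reduction yields precisely the ODE \eqref{eq:projected-gradient-method-linear-ODE} together with the kernel condition \eqref{eq:first-thm-kernel-condition-projected-gradient-method}. This is exactly the content anticipated in \autoref{rem:OmLinear}.

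First I would invoke \autoref{thm:subgradient-method-faces}, whose hypotheses hold since $\mathbf{0}$ is a $K$-restricted saddle point. This yields that the $\omega$-limit set $\Omega$ is convex, that $(\phi,\Omega,d)$ is a flow of isometries, and that every trajectory $\mathbf{z}(t)$ in $\Omega$ solves \eqref{eq:subgradient-method-limiting-solutions}, i.e. the (smooth) subgradient method on the affine subspace $V=\affinespan(F)$, where $F$ is the minimal face of $K$ containing all $K$-restricted saddle points. Since $\mathbf{0}$ is an equilibrium of the subgradient method it lies in its own $\omega$-limit set, so $\mathbf{0}\in\Omega$; because $(\phi,\Omega,d)$ is a flow of isometries and $\mathbf{0}$ is fixed by it, the distance $d(\mathbf{z}(t),\mathbf{0})$ is constant in $t$ for every trajectory $\mathbf{z}(t)$ in $\Omega$.

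Next I would view each such $\mathbf{z}(t)$ as a solution of the subgradient method on the affine subspace $V$ that lies a constant distance from the saddle point $\mathbf{0}$. This is exactly the defining property of the limiting set characterized in Part~I, so \autoref{thm:projected-gradient-method-result} applies to this smooth system. Identifying $\mathbf{\Pi}$ with the orthogonal projection onto the orthogonal complement of $N_V$ (the direction space of $V$), the projection term $\mathbf{P}_{N_V}(\mathbf{f}(\mathbf{z}))$ in \eqref{eq:subgradient-method-limiting-solutions} becomes $(\mathbf{I}-\mathbf{\Pi})\mathbf{f}(\mathbf{z})$, and the conclusion of \autoref{thm:projected-gradient-method-result} gives precisely that $\mathbf{z}(t)$ satisfies the linear ODE \eqref{eq:projected-gradient-method-linear-ODE} and the kernel condition \eqref{eq:first-thm-kernel-condition-projected-gradient-method}, with $\mathbf{A}$ and $\mathbf{B}$ the matrices defined in \eqref{def-of-A-and-B}.

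The only genuine work is bookkeeping: checking that the smooth subgradient method on $V$ matches the precise hypotheses of \autoref{thm:projected-gradient-method-result}, and that the projection and partial-derivative matrices line up correctly under the identification of $\mathbf{\Pi}$ with the projection onto the orthogonal complement of $N_V$. In particular one must confirm that the constant-distance characterization of $\Omega$ coincides with the limiting set that Part~I's theorem describes, so that its hypotheses are genuinely satisfied rather than merely suggestive. I expect this matching of the two frameworks, rather than any new estimate, to be the main point requiring care.
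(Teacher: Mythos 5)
Your proposal is correct and follows essentially the same route as the paper: the paper's proof of \autoref{cor:subgradient_linear} simply cites \autoref{thm:subgradient-method-faces} and \autoref{thm:projected-gradient-method-result} via the argument in the proof of \autoref{thm:subgrad-method-final-convergence-criterion}, which is exactly your chain (trajectories in $\Omega$ solve \eqref{eq:subgradient-method-limiting-solutions}, rewrite via $\mathbf{\Pi}$ as \eqref{eq:projected-gradient-method}, use the isometry property to get the constant-distance hypothesis, then apply the Part~I characterization). Your explicit observation that $\mathbf{0}\in\Omega$ and is fixed by the flow of isometries is just a slightly more detailed rendering of the paper's phrase ``noting also the isometry property of the $\omega$-limit set.''
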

}
\section{Applications}\label{sec:modification-methods}

In this section we apply the results of \autoref{sec:Main} to obtain global convergence in a number cases.
\icl{In particular, we} look at 
\icl{examples} of
{\color{black} modification methods, relevant in network optimization, where the concave-convex function is modified to provide guarantees of convergence.}
{\color{black}The application of one such modification method to the problem {\color{black}of multi-path routing is also \icl{discussed}.}}

The proofs for this section are provided in appendix \ref{sec:proofs-of-examples}.

\subsection{Modification methods for convergence}\label{subsec:modification-methods}
We will consider methods for modifying $\varphi$ so that the (sub)gradient method converges to a saddle point. {\color{black}The methods that will be discussed are relevant in network optimisation (see e.g. \cite{arrow}, \cite{Paganini}), as they }
preserve the localised structure of the {\color{black}dynamics. It should be noted that these modifications do not necessarily render the {\color{black}function} strictly concave-convex and hence convergence proofs are more involved. We show below that the results in \autoref{sec:Main} provide a systematic and unified way of proving convergence by making use of Theorem \ref{thm:subgrad-method-final-convergence-criterion}, while also allowing to consider these methods in a generalized setting of a general convex \icl{domains for the variables $x, y$ respectively, in the concave-convex function $\varphi(x,y)$.}}
\subsubsection{Auxiliary variables method}\label{subsec:modification-method}

Given a concave-convex function $\varphi$ defined on a convex domain $K$ \icl{as in \eqref{eq:K_phi}}, we define the modified concave-convex function $\varphi':\mathbb{R}^{n'}\times K\to\mathbb{R}$ as
{\color{black}
\begin{equation}\label{modified-varphi}
\begin{aligned}
& \quad \varphi'(x',x,y)=\varphi(x,y)+\psi(Mx-x')\\
&\psi:\mathbb{R}^{n'}\to\mathbb{R},
\psi\in C^2, \text{ is strictly concave}\\ 
&\qquad \text{with } \psi(0)=0, \psi(u)\leq0,
\end{aligned}
\end{equation}
where $x'$ is a vector of $n'$ auxiliary variables, and $M\in\mathbb{R}^{n'\times n}$ is a constant matrix that satisfies $\ker(M)\cap\ker(\varphi_{xx}(\mathbf{\bar{z}}))=\{0\}$ for a $K$-restricted saddle point  $\mathbf{\bar{z}}$ of $\varphi$. }

 We define the augmented convex domain as $K'=\mathbb{R}^{n'}\times K$. Note that the additional auxiliary variables are not restricted and are allowed to take values in the whole of $\mathbb{R}^{n'}$. {\color{black}Also note that the} $n\times n$ identity matrix always satisfies the assumptions upon $M$ above.

{\begin{remark}
{\color{black}An important feature of this modification (and also the ones that will be considered below) is the fact that there is} a correspondence between $K$-restricted saddle points of $\varphi$ and $K'$-restricted saddle points of $\varphi'$, {\color{black}with the values of $x,y$ at the saddle points remaining unchanged. In particular, if} $(\bar{x},\bar{y})$ is a $K$-restricted saddle point of $\varphi$, then $(M\bar{x},\bar{x},\bar{y})$ is a $K'$-restricted saddle point of $\varphi'$. In the reverse direction, if $(\bar{x}',\bar{x},\bar{y})$ is a $K'$-restricted saddle point of $\varphi'$ then $M\bar{x}=\bar{x}'$ and $(\bar{x},\bar{y})$ is a $K$-restricted saddle point of $\varphi$.
\end{remark}
{\color{black}
\begin{remark}
The significance of this method will become more {\color{black}clear in
the multipath routing problem discussed in Appendix~\ref{subsec:examples:multi-path-routing}}. In particular, {\color{black}this method allows convergence to be guaranteed in network optimization problems} without introducing additional information transfer among nodes. Special cases of this method have also been used
in \cite{Eoin}, \cite{holdingAutomatica} {\color{black}in applications} in economic and power networks.
\end{remark}
}

\subsubsection{Penalty function method}\label{subsec:examples:penalty-function}
For this and the next method we will assume that the concave-convex functions $\varphi$ is a Lagrangian originating from a concave optimization problem (see subsection \ref{sec:application-to-concave-optimization}). We will assume that the Lagrangian $\varphi$ satisfies
\begin{equation}\label{lag-conds-fullspace}
\begin{aligned}
\varphi(x,y)&=U(x)+y^Tg(x)\\
C^2\ni U&:\mathbb{R}^n\to\mathbb{R}\text{ is concave}\\
C^2\ni g&:\mathbb{R}^n\to\mathbb{R}^m\text{ is concave}.
\end{aligned}
\end{equation}

We consider a so called penalty method (see e.g. \cite{Freund}). This method adds a penalising term to the Lagrangian based directly on the constraint functions. The new Lagrangian $\varphi'$ is defined by
\begin{equation}\label{penalty-function-method}
\begin{aligned}
\varphi'(x,y)&=\varphi(x,y)+\psi(g(x))\\
C^2\ni \psi:\mathbb{R}^m&\to\mathbb{R}\text{ is strictly concave with }\psi_u>0\\
\psi(u)&=0\iff u\ge0.
\end{aligned}
\end{equation}
It is easy to see that the saddle points of $\varphi$ and $\varphi'$ are the same.

{\color{black}
\begin{remark}\label{rem:penlty_decentr}
This modification method
is also often applied to network optimization problems, i.e. problems where $U(x)$ is of the form $U(x)=\sum_iU_i(x)$ and each of the $U_i(x)$ is a function of only a few of the components of $x$. Similarly each component, $g_i(x)$, of the constraints $g(x)$ depends on only a few of the components of $x$. The subgradient method for such problems applied to  \eqref{lag-conds-fullspace} has a decentralized structure. When applied to the modified version \eqref{penalty-function-method} the dynamics will still have a decentralized structure, but will often also involve additional information exchange between neighboring nodes, e.g. when $g(x)$ is linear, due to the nonlinearity of the function~$\psi(.)$.
\end{remark}
}

\begin{remark}
This method has been considered previously by many authors, (see \cite{Paganini} and the references {\color{black}therein\footnote{\color{black}Note that a related modification method in discrete time is the ADMM method \cite{BoydADMM}, \cite{ADMM_Gabay}.}}), either without constraints, or with positivity constraints, i.e. $K=\mathbb{R}^n_+\times\mathbb{R}^m_+$. \autoref{thm:convergence-of-modification-methods} below applies to all non-empty closed {\color{black} sets} $K\subseteq \mathbb{R}^{n+m}$ \icl{which are a product set
of convex sets as in \eqref{set:K}}.
\end{remark}

\subsubsection{Constraint modification method}\label{subsec:examples:constraint-modification}
We next recall a method proposed by Arrow et al.\cite{arrow} and later studied in \cite{Paganini}. Here we instead modify the constraints to enforce strict concavity. The Lagrangian \eqref{lag-conds-fullspace} is modified to become:
\begin{equation}\label{constraint-modification-method-assumptions}
\begin{aligned}
\varphi'(x,y)&=U(x)+y^T\psi(g(x))\\
C^2\ni U&:\mathbb{R}^n\to\mathbb{R}\text{ is concave}\\
C^2\ni g&:\mathbb{R}^n\to\mathbb{R}^m\text{ is concave}\\
C^2\ni\psi=&[\psi^1,\dotsc,\psi^m]^T:\mathbb{R}^m\to\mathbb{R}^m\\
\psi^j(0)&=0, \psi^j_u\ge0\text{ and }\psi^j_{uu}<0 \text{ for }j=1,\dotsc m.
\end{aligned}
\end{equation}
It is clear that the {\color{black}value of $x$ at the} saddle points of the modified and original Lagrangian will be the same.
{\color{black}
In analogy with {\color{black}Remark} \ref{rem:penlty_decentr}, this method also preserves the decentralized structure
of the subgradient method for network optimization problems, but may require additional information transfer.}

\begin{remark}
Previous works \cite{arrow},\cite{Paganini},\cite{Cherukuri-primal-dual} have proved convergence of this method with positivity constraints, i.e. $K=\mathbb{R}_+^n\times\mathbb{R}^m_+$. \autoref{thm:convergence-of-modification-methods} below applies to any constraint \icl{set $K$ which is a product set of convex sets as in \eqref{set:K}}.
\end{remark}
\icl{
\begin{remark}\label{rem:meth_comp}
It should be noted that even though the three methods described in this section all provide global convergence guarantees, they lead to different information structures in the underlying dynamics when applied to network optimization problems. In particular, the auxiliary variable method leads to fully decentralized implementations, whereas the other two can require additional information transfer among nodes. This will be illustrated in \autoref{subsec:examples:multi-path-routing} where the multipath routing problem will be discussed.
\end{remark}
}

\subsubsection{Convergence results}
We now give a global convergence result for each of the methods described above on {\color{black}general} convex domains.


\icl{
\begin{theorem}[Convergence of modification methods]\label{thm:convergence-of-modification-methods}
Let $K\subseteq\mathbb{R}^{n+m}$ be a non-empty closed set that is the cartesian product of two convex sets as in \eqref{set:K}, and 
assume that $\varphi$, $\varphi'$ satisfy one of the following:
\begin{enumerate}
\item \emph{Auxiliary variable method:} Let $\varphi\in C^2$ be concave-convex on $K$ and 
    $\varphi'$, $K'$ be defined by \eqref{modified-varphi} and the text directly below it.
\item \emph{Penalty function method:} Let $\varphi$ have the form \eqref{lag-conds-fullspace}, and $\varphi'$ be defined by \eqref{penalty-function-method}. 
\item \emph{Constraint modification method:} Let $\varphi$ have the form \eqref{lag-conds-fullspace}, and $\varphi'$ be given by \eqref{constraint-modification-method-assumptions}.
\end{enumerate}

{Also assume that $\varphi$ has a $K$-restricted saddle point. Then the subgradient method \eqref{gradmethod-convex-domain} applied to $\varphi'$ on domain $K'$ in $1)$ and domain $K$ in $2), 3)$}  is globally convergent.
\end{theorem}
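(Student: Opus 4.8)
The plan is to apply the convergence criterion \autoref{thm:subgrad-method-final-convergence-criterion} to the modified function $\varphi'$ (on $K'$ in case 1, on $K$ in cases 2--3) in each of the three settings, reducing everything to showing that the two standing conditions of that theorem force any qualifying trajectory to be constant. First I would check the hypotheses: that $\varphi'\in C^2$ is concave-convex on the relevant domain (a requirement for \autoref{def:subgradient-method} to apply at all; for cases 2--3 it rests on the multiplier block of the domain keeping $y^Tg$, resp. $y^T\psi(g)$, concave in $x$), and that a restricted saddle point exists, which follows from the saddle-point correspondences recorded in the remarks following \eqref{modified-varphi}, \eqref{penalty-function-method} and \eqref{constraint-modification-method-assumptions}. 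After translating so the saddle sits at $\mathbf 0$, it suffices to show the subgradient method on $\varphi'$ has no non-constant trajectory satisfying both \eqref{eq:projected-gradient-method-linear-ODE} and \eqref{eq:first-thm-kernel-condition-projected-gradient-method}.

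The common engine is a reformulation. Since $\mathbf z(t)$ lies in the affine span $V$ of the minimal face we have $\mathbf{\Pi}\mathbf z=\mathbf z$; since $\mathbf B(\cdot)$ is negative semi-definite and $\mathbf A(\mathbf 0)$ is skew-symmetric (because $\varphi'_{xy}=(\varphi'_{yx})^T$), the matrix $\mathbf M:=\mathbf{\Pi}\mathbf A(\mathbf 0)\mathbf{\Pi}$ is skew-symmetric and the $r=0$ instance of \eqref{eq:first-thm-kernel-condition-projected-gradient-method} reduces to $\mathbf z(t)\in\mathcal K:=\ker(\mathbf{\Pi}\mathbf B(\mathbf 0)\mathbf{\Pi})$ for all $t$. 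A qualifying trajectory thus solves $\dot{\mathbf z}=\mathbf M\mathbf z$ while staying in the fixed subspace $\mathcal K$; differentiating gives $\mathbf M^k\mathbf z(t)\in\mathcal K$ for all $k$, so $\mathbf z(t)$ lies in the largest $\mathbf M$-invariant subspace $W\subseteq\mathcal K$. Hence it is enough, case by case, to prove $W\subseteq\ker\mathbf M$, i.e. that every trajectory confined to $\mathcal K$ is constant --- an observability-type statement for $(\mathbf A(\mathbf 0),\mathbf B(\mathbf 0))$ relative to the skew dynamics. (As a sanity check, in the strictly concave-convex case of \autoref{cor:convergence-of-subgradient-method-in-strict-case} one has $\mathbf B(\mathbf 0)\prec0$, whence $\mathcal K\cap V=\{0\}$ and the conclusion is immediate.)

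For the auxiliary variable method this closes cleanly at $\mathbf 0$. Writing $J=[\,-I\;\;M\,]$, the concave block of $\mathbf B(\mathbf 0)$ in the variables $(x',x,y)$ is $J^T\psi_{uu}(0)J+\diag(0,\varphi_{xx}(\mathbf 0))$, so membership in $\mathcal K$ forces $x'=Mx$ with $x\in\ker\varphi_{xx}(\mathbf 0)$, while $\varphi'_{x'y}=0$ makes the $x'$-row of $\mathbf A(\mathbf 0)$ vanish. Propagating $\mathbf M$ through $W$ and imposing $x'=Mx$ then drives the admissible $x$-displacements into $\ker M\cap\ker\varphi_{xx}(\mathbf 0)$, which is $\{0\}$ by the standing hypothesis on $M$; this collapses $W$ to $\ker\mathbf M$. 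The penalty and constraint-modification methods are where the real difficulty lies, and I expect this to be the main obstacle: at the translated saddle the added strict concavity can \emph{degenerate}, since the penalty Hessian $g_x^T\psi_{uu}(g(\bar x))g_x$ can vanish on the strictly feasible region while the constraint-modification Hessian $\sum_j\bar y_j\,\psi^j_{uu}(g_j)_x^T(g_j)_x$ is weighted by multipliers $\bar y_j$ that vanish on inactive constraints. Thus $\mathbf B(\mathbf 0)$ alone need not shrink $\mathcal K$ in those directions and the saddle-level argument fails. To recover convergence I would use the \emph{full} range $r\in[0,1]$ of the first kernel condition: $\mathbf z(t)\in\ker\mathbf B(r\mathbf z(t))$ yields $g_x(rx(t))\,x(t)=0$, hence $\tfrac{d}{dr}g(rx(t))=0$, so $g(x(t))$ is constant in $t$ (analogously for the active constraints in the constraint method), freezing the constraint values along the trajectory. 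On this frozen set the first-order effect of the modification matches that at the saddle, reducing the motion to the gradient method, and the residual oscillation in the directions $\ker g_x$ is then eliminated by combining the second kernel condition \eqref{eq:first-thm-kernel-condition-projected-gradient-method} with the skew ODE \eqref{eq:projected-gradient-method-linear-ODE}. A secondary technical point throughout is the bookkeeping of $\mathbf{\Pi}$ for general, possibly non-product, faces of $K$, which I would handle by arguing intrinsically on $V$ rather than in fixed $x/y$ blocks.
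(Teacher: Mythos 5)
Your overall strategy---invoke \autoref{thm:subgrad-method-final-convergence-criterion} for the modified function and show that the two kernel conditions admit only constant trajectories---is exactly the paper's route for cases 1) and 2), and your arguments there match the paper's proofs of \autoref{cor:auxiliary-convergence} and \autoref{cor:penalty-function-method-converges-convex-domain}: the $\mathbf{B}$-kernel condition plus strict concavity of $\psi$ forces $x'=M\tilde{x}$ and $\tilde{x}\in\ker\varphi_{xx}(\mathbf{0})$ (auxiliary variables), respectively $g_x(rx)\tilde{x}=0$ for all $r$ (penalty), and the remaining motion is killed using the structure of $\mathbf{A}(\mathbf{0})$; the paper makes your closing step concrete by differentiating $g_x(0)\tilde{x}=0$ in time and using positive semi-definiteness of $\Pi_{11}$ and $\Pi_{12}\Pi_{21}$.

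The gap is in case 3), where you propose to extract $g_x(rx)x=0$ ``analogously'' from $\mathbf{z}(t)\in\ker(\mathbf{\Pi}\mathbf{B}(r\mathbf{z})\mathbf{\Pi})$. That mechanism fails here: the strictifying term enters through $y^T\psi(g(x))$, so its contribution to $\varphi'_{xx}$ is $\sum_j y_j\,[\psi^j\circ g]_{xx}$, weighted by multipliers that can vanish identically along the relevant segment (you flag this degeneracy yourself but then rely on the $\mathbf{B}$-condition anyway), and $\varphi'_{yy}=0$, so $\ker\mathbf{B}$ gives no information in precisely the dangerous directions. The paper's mechanism is the \emph{other} kernel condition in \eqref{eq:first-thm-kernel-condition-projected-gradient-method}: because this modification perturbs the off-diagonal block $\varphi'_{xy}=(\psi_g g_x)^T$, the requirement $\mathbf{z}\in\ker(\mathbf{A}(r\mathbf{z})-\mathbf{A}(\mathbf{0}))$, differentiated at $r=0$, yields $[g_x(0)x]^T\psi^i_{gg}(0)[g_x(0)x]+\psi^i_g(0)^T(x^Tg_{xx}(0)x)=0$ and hence $g_x(0)x=0$ from $\psi^i_{gg}<0$, with no multipliers involved. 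Moreover, the paper does not apply \autoref{thm:subgrad-method-final-convergence-criterion} directly in this case: it reduces via \autoref{cor:subgradient-method-face-criterion} to the gradient method on affine spans of faces, and it is exactly the product hypothesis $K=K_x\times K_y$ that guarantees every such span splits as $V_x\times V_y$, so that the restricted function is again of the form \eqref{constraint-modification-method-assumptions} and \autoref{prop:constraint-modification-method} applies. Your proposal never uses the product structure of $K$, which is a hypothesis of the theorem and must enter somewhere; as written, case 3) does not go through.
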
}
{\color{black}
\begin{remark}
Each of the convergence results in \autoref{thm:convergence-of-modification-methods} is proved using \autoref{thm:subgrad-method-final-convergence-criterion}.
It should also be noted that none of the modification methods produce necessarily  a \emph{strictly} concave-convex function $\varphi'$. Global convergence to a saddle point is still though guaranteed by ensuring that no trajectory, other than saddle points, satisfy conditions \eqref{eq:projected-gradient-method-linear-ODE}, \eqref{eq:first-thm-kernel-condition-projected-gradient-method} in \autoref{thm:subgrad-method-final-convergence-criterion}.
\end{remark}}

\subsection{Multi-path congestion control}\label{subsec:examples:multi-path-routing}
{\color{black}In this \icl{section} 
we discuss multipath routing as an example of a saddle point problem that is inherently not strictly concave-convex, but where convergence can be guaranteed via appropriate modifications that do not necessarily render the problem strictly concave-convex.}}

{\color{black}Combined control of routing and flow} is a problem that has received considerable attention within the communications literature due to the significant advantages it can provide relative to congestion control algorithms that use single paths \cite{Lee2002survey}. Nevertheless {\color{black}its} implementation is not directly obvious as the availability of multiple routes can render the network prone to route flapping instabilities \cite{Wang1992analysis}, \cite{Kar01optimizationbased}, \cite{Voice2007}.

A classical approach to analyse such algorithms is to formulate them as solving a corresponding network optimization problem \cite{KMT}, \cite{SrikantB} with primal/dual update rules leading to a decentralized implementation. This optimization problem is, however, not strictly concave and modifications that make it strictly concave can lead to a deviation from the optimal solution \cite{Voice2007}, \cite{Feng-automatica}.



Here we consider a multi-path routing problem with a fixed number of routes per source/destination pair, as in \cite{KMT}, \cite{Voice2007}, \cite{Lestas-Routing-CDC2004}, \cite{Lin-Shroff}. For such schemes we investigate algorithms that allow the corresponding network optimization problem to be solved without requiring any relaxation in its solution or any additional information {\color{black}exchange.}

\subsubsection{Problem formulation}
{\color{black}We consider} a network that consists of sources $s_1,\dotsc,s_m$, routes $r_1,\dotsc,r_n$, and links $l_1,\dotsc,l_l$. Each source $s_i$ is associated with a unique destination for a message which is to be routed {\color{black}each source also has a fixed set of routes associated with it.}. Every route $r_j$ has a unique source $s_i$, and we write $r_j\sim s_i$ to mean that $s_i$ is the source associated with route $r_j$. Routes $r_j$ each use a number of links, and we write $r_j\sim l_k$ to mean that the link $l_k$ is used by the route $r_j$. The desired running capacity of the link $l_k$ is denoted $C_k$, and $0\le C\in\mathbb{R}^l$ is the vector of these capacities. We let $A$ be the connectivity matrix, so that $A_{kj}=1$ if $l_k\sim r_j$ and $0$ otherwise. In the same way we set $H_{ij}=1$ if $s_i\sim r_j$ and $0$ otherwise. $x_j$ denotes the current usage of the route $r_j$. We associate to each source $s_i$ a strictly concave, increasing utility function $U_i$.

We consider the problem of maximising total utility
\begin{equation}\label{eq:multi-path-congestion-optimization-problem}
\max_{x\ge0,Ax\le C} \sum_{s_i}U_i\left(\sum_{r_j\sim s_i}x_j\right).
\end{equation}
Here the first sum is over all sources $s_i$, and the second over routes $r_j$ with $r_j\sim s_i$ (we shall use such notation throughout {\color{black}this section}). This optimisation problem is associated with the Lagrangian
\begin{equation}\label{eq:multi-path-congestion-lagrangian}
\varphi(x,y)=\sum_{s_i}U_i\left(\sum_{r_j\sim s_i}x_j\right)+y^T(C-Ax).
\end{equation}
where $y\in\mathbb{R}^l_+$ are {\color{black}the Lagrange multipliers.
Note that even though $U_i(.)$ is strictly concave, this is not strictly concave in \eqref{eq:multi-path-congestion-optimization-problem} with respect to the decision variables $x_i$, hence the Lagrangian $\varphi(x,y)$ is not strictly concave-convex.}

A common approach in the context of congestion control is to consider primal-dual dynamics originating from this Lagrangian so as to deduce decentralized algorithms for solving the network optimisation problem \eqref{eq:multi-path-congestion-optimization-problem} \cite{KMT},\cite{SrikantB}.
This gives rise to the subgradient {\color{black}dynamics}
{
\begin{equation}\label{eq:multi-path-routing-dynamics}
\begin{aligned}
\dot{x}_j&=\left[U_i'\left(\sum_{s_i\sim r_k}x_k\right)-\sum_{l_k\sim r_j}y_k\right]_{x_j}^+ \\
\dot{y}_k&=\left[\sum_{l_k\sim r_j}x_j-C_k\right]_{y_k}^+
\end{aligned}
\end{equation}}
where $s_i\sim x_j$ in the equation for $\dot{x}_j$ and $U'_i$ is the derivative of the utility function $U_i$. Note that the equilibrium points of \eqref{eq:multi-path-routing-dynamics} are saddle points of the Lagrangian (under the positivity constraints on $x$ and $y$) and hence also solutions of the optimization problem \eqref{eq:multi-path-congestion-optimization-problem} (Slater's condition is assumed to {\color{black}hold).} 

\begin{remark}
	The dynamics \eqref{eq:multi-path-routing-dynamics} {\color{black}are
the subgradient method \eqref{gradmethod-convex-domain} applied to the Lagrangian \eqref{eq:multi-path-congestion-lagrangian} on the positive orthant $\mathbb{R}^{n+l}_+$.}
\end{remark}

The dynamics \eqref{eq:multi-path-routing-dynamics} are also localised in the sense that the update rules for $x_j$ depend only on the current usage, $x_k$, of routes with the same source and of the congestion signals associated with links on these routes. In the same way the update rules for congestion signals $y_k$ depend only on the usage of routes using the associated link.
\subsubsection{Instability}
The dynamics \eqref{eq:multi-path-routing-dynamics} inherit the stability properties of the subgradient method discussed in \autoref{sec:Main}. In particular the distance of $(x(t),y(t))$ from any saddle point $(\bar{x},\bar{y})$ is non-increasing. However, the lack of strict concavity of the Lagrangian \eqref{eq:multi-path-congestion-lagrangian} leads to a lack of global convergence of the dynamics \eqref{eq:multi-path-routing-dynamics} in some situations as we shall describe below.

{\color{black}We assume for simplicity that} there is a strictly positive saddle point $\bar{\mathbf{z}}>0$. In this situation \autoref{thm:subgradient-method-faces}(ii) applies, and the convergence properties are the same as {\color{black}those of} the unconstrained gradient method. The structure of the problem suggests an application of \cite[{\color{black} 
Theorem 21}]
{Holding-Lestas-gradient-method-Part-I}. Here a simple computation yields that $\mathcal{S}_{\text{linear}}$ is equal to $\bar{\mathcal{S}}$ (we use the notation of \cite{Holding-Lestas-gradient-method-Part-I}) unless the following \emph{algebraic} condition on the network topology holds:
 \begin{equation}
 \label{eq:algebriac-instability-condition}
 \exists u\in \ker(H)\setminus\{0\},\lambda>0\text{ such that }A^TAu=\lambda u.
 \end{equation}
 \cite[{\color{black}
 Theorem 21}]
 {Holding-Lestas-gradient-method-Part-I} tells us that global convergence holds if \eqref{eq:algebriac-instability-condition} does not hold, but in fact more is true.
 \begin{proposition}\label{prop:algebriac-instability}
Let $\mathbf{\bar{z}}=(\bar{x},\bar{y})>0$ be a saddle point of $\varphi$ defined by \eqref{eq:multi-path-congestion-lagrangian} and $U_i\in C^2$ be be strictly concave and strictly increasing. Then the dynamics \eqref{eq:multi-path-routing-dynamics} are globally convergent if and only if \eqref{eq:algebriac-instability-condition} does not hold.
\end{proposition}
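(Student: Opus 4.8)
The plan is to establish the biconditional by combining the reduction afforded by \autoref{thm:subgradient-method-faces}(ii) with a direct analysis of the linear limiting dynamics characterized in \cite{Holding-Lestas-gradient-method-Part-I}. Since $\mathbf{\bar{z}}=(\bar{x},\bar{y})>0$ is a saddle point in the \emph{interior} of the positive orthant $\mathbb{R}^{n+l}_+$, \autoref{thm:subgradient-method-faces}(ii) tells us that the $\omega$-limit set of the subgradient method \eqref{eq:multi-path-routing-dynamics} coincides with the set of trajectories of the \emph{unconstrained} gradient method that lie a constant distance from the saddle points and remain in $K$. Because the saddle point is strictly positive, a neighbourhood of it lies in the interior of $K$, so the containment-in-$K$ constraint is inactive near the saddle and the limiting behaviour is governed entirely by the unconstrained gradient dynamics on the Lagrangian \eqref{eq:multi-path-congestion-lagrangian}. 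The forward direction (convergence when \eqref{eq:algebriac-instability-condition} fails) is then exactly the content of \cite[Theorem~18]{Holding-Lestas-gradient-method-Part-I}, as noted in the text, so the new work is the converse.

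For the converse I would argue by contraposition: assuming \eqref{eq:algebriac-instability-condition} holds, I exhibit an explicit non-constant periodic trajectory of \eqref{eq:multi-path-routing-dynamics}, thereby contradicting global convergence. First I would compute the linearization of the gradient method at $\mathbf{\bar{z}}$. Writing the Hessian blocks of \eqref{eq:multi-path-congestion-lagrangian}, the $x$--$x$ block is $-H^T D H$ where $D=\diag(-U_i''(\cdot))\succ0$ (strict concavity and the chain rule through $\sum_{r_j\sim s_i}x_j=(Hx)_i$), the $x$--$y$ coupling is $-A^T$, and the $y$--$y$ block vanishes. Taking $u\in\ker(H)\setminus\{0\}$ as in \eqref{eq:algebriac-instability-condition} kills the curvature term, since $Hu=0$ gives $H^TDHu=0$, so on the subspace spanned by $u$ the dynamics reduce to the pure rotation $\dot{x}=-A^Ty$, $\dot{y}=Ax$ restricted appropriately. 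The eigenvalue relation $A^TAu=\lambda u$ with $\lambda>0$ then produces a genuine oscillatory mode: setting $x(t)=(\cos\sqrt{\lambda}\,t)\,u/\sqrt{\lambda}$-type ansatz paired with $y(t)$ proportional to $Au$, one checks these solve the linear system with purely imaginary eigenvalues $\pm i\sqrt{\lambda}$, giving a bounded non-decaying solution. By choosing the amplitude small enough this oscillation stays within the open positive orthant for all time (using $\bar{\mathbf{z}}>0$), so it is a bona fide trajectory of \eqref{eq:multi-path-routing-dynamics} that does not converge, establishing non-convergence.

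I would then invoke the ``more is true'' refinement: the text asserts that $\mathcal{S}_{\text{linear}}=\bar{\mathcal{S}}$ precisely when \eqref{eq:algebriac-instability-condition} fails. This equality means the only constant-distance trajectories of the gradient method are the saddle points themselves, which via the reduction above forces the $\omega$-limit set of \eqref{eq:multi-path-routing-dynamics} to consist of equilibria only, i.e.\ global convergence; conversely the failure of the equality is exactly signalled by the existence of the oscillatory mode constructed above. To make the converse airtight I must verify that such a linear oscillation is not merely a solution of the linearized system but genuinely lies in $\mathcal{S}$ for the full nonlinear gradient method — this is where \cite[Theorem~18]{Holding-Lestas-gradient-method-Part-I}'s exact characterization is needed, ensuring the linear solution satisfies the kernel conditions \eqref{eq:first-thm-kernel-condition-projected-gradient-method} globally (here $\mathbf{\Pi}$ is the identity). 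The main obstacle I anticipate is precisely this last point: confirming that the curvature-vanishing condition $Hu=0$ propagates to the \emph{nonlinear} Hessian along the whole trajectory (not just at $\mathbf{0}$), so that the constructed oscillation genuinely persists as an exact solution of the nonlinear dynamics rather than decaying due to higher-order terms. Verifying that $u\in\ker(H)$ implies the relevant nonlinear Hessian terms vanish identically along the orbit — and hence that the trajectory remains at constant distance from the saddle — is the crux of the argument.
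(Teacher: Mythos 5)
Your proposal is correct and takes essentially the same route as the paper: the \emph{if} direction via the reduction to the unconstrained gradient method (\autoref{thm:subgradient-method-faces}(ii)) together with the characterization from Part~I, and the \emph{only if} direction via an explicit small-amplitude oscillation $\bar{\mathbf{z}}+ce^{t\mathbf{A}(\bar{\mathbf{z}})}\bigl[u^T\ \ -(Au)^T\bigr]^T$ kept inside the open positive orthant by skew-symmetry of $\mathbf{A}(\bar{\mathbf{z}})$. The verification you flag as the crux does go through: the invariant plane spanned by $(u,-Au)$ and $(\lambda u, Au)$ has $x$-component confined to $\Span(u)\subseteq\ker(H)$, so $Hx(t)$ is constant along the orbit, the utility gradient is frozen at its saddle-point value, and the nonlinear vector field coincides exactly with the linear one on that plane.
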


The algebraic criterion \eqref{eq:algebriac-instability-condition} on the network topology is satisfied by many networks, for example the network in \autoref{fig:network}.

We also remark that under the condition \eqref{eq:algebriac-instability-condition}, the system is sensitive to noise in the sense that the unconstrained dynamics satisfy the conditions of \cite[{\color{black}
Theorem 22}]
{Holding-Lestas-gradient-method-Part-I}.
\subsubsection{Modified dynamics}
\label{sec:modified}
Here we present a modification of the dynamics \eqref{eq:multi-path-routing-dynamics}, that, while still fully localised, {\color{black}gives} guaranteed convergence to an optimal solution of \eqref{eq:multi-path-congestion-optimization-problem}.

We use the auxiliary variables method described in \autoref{subsec:modification-method} {\color{black}and} define {\color{black}the modified} optimisation problem
\begin{equation}\label{eq:multi-path-congestion-modified-optimization-problem}
\max_{\substack{x\ge0,x'\in\mathbb{R}^n\\Ax\le C}} \sum_{s_i}U_i\left(\sum_{r_j\sim s_i}x_j\right)-\frac12\sum_{r_k}\kappa_k|x_k'-x_k|^2
\end{equation}
where $x'\in\mathbb{R}^n$ is an additional vector to be optimised over, and $\kappa_k>0$ are arbitrary constants. It is important to note that this has the same optimal $x$ points as \eqref{eq:multi-path-congestion-optimization-problem}. This gives rise to a modified Lagrangian
\begin{equation}\label{eq:multi-path-routing-modified-lagrangian}
\begin{aligned}
\varphi'(x',x,y)&=\sum_{s_i}U_i\left(\sum_{r_j\sim s_i}x_j\right)+y^T(C-Ax)\\
&\quad-\frac12\sum_{r_k}\kappa_k|x'_k-x_k|^2.
\end{aligned}
\end{equation}
The new dynamics are given by the following subgradient method.
\begin{equation}\label{eq:multi-path-routing-modified-dynamics}
\begin{aligned}
\dot{x}_j&=\left[U_i'\left(\sum_{s_i\sim r_k}x_k\right)\!-\!\sum_{l_k\sim r_j}y_k+\kappa_j(x_j'-x_j)\right]_{x_j}^+\\
\dot{x}_j'&=\kappa_j(x_j-x_j')\\
\dot{y}_k&=\left[\sum_{l_k\sim r_j}x_j-C_k\right]_{y_k}^+.
\end{aligned}
\end{equation}
\begin{remark}
	The dynamics \eqref{eq:multi-path-routing-modified-dynamics} are the subgradient method \eqref{gradmethod-convex-domain} {\color{black}applied to the modified Lagrangian \eqref{eq:multi-path-routing-modified-lagrangian} on $\mathbb{R}^n_+\times\mathbb{R}^n\times\mathbb{R}^l_+$}. The Lagrangian \eqref{eq:multi-path-routing-modified-lagrangian} corresponds to \eqref{modified-varphi} with $\psi(z)=-|z|^2/2$ and $M$ the $n\times n$ identity matrix.
\end{remark}

It is apparent (as discussed in subsection \ref{subsec:modification-method}) that the equilibrium points of the modified dynamics \eqref{eq:multi-path-routing-modified-dynamics} and the original dynamics \eqref{eq:multi-path-routing-dynamics} are in correspondence. We remark that the new dynamics are analogous to the addition of a low pass filter to the unmodified dynamics \eqref{eq:multi-path-routing-dynamics}.

These dynamics are still localised. Each route $r_k$ is now associated with its usage, $x_k$, and a new variable $x_k'$. To update $x_k$ the only additional information required over the unmodified scheme is the value of $x_k'$, and to update $x_k'$ one only needs $x_k$. Thus the new variables $x_k'$ are local to the updaters of $x_k$.

\icl{It should be noted that if instead the other two modification methods described in \autoref{subsec:modification-methods} were used, then the modified gradient dynamics would require additional information transfer among nodes for their implementation. In particular, due to the nonlinearity of the function $\psi$ in \eqref{penalty-function-method}, \eqref{constraint-modification-method-assumptions}, the ODE for the $x_i$ updates would have required also the flows $x_j$ from neighboring nodes, which is practically undesirable as such information is not available in existing implementations of congestion control algorithms.}

Convergence of the modified dynamics \icl{\eqref{eq:multi-path-routing-modified-dynamics}} to an optimum of the original problem now follows immediately from \autoref{thm:convergence-of-modification-methods}.1).
\begin{proposition}\label{thm:multi-path-routing-modified-method-converges}
	Let $U_i\in C^2$ be strictly concave and strictly increasing. Then solutions of \eqref{eq:multi-path-routing-modified-dynamics} converge as $t\to\infty$ to maxima of the original problem \eqref{eq:multi-path-congestion-optimization-problem}.
\end{proposition}
\begin{remark}
	The use of derivative action to damp oscillatory behaviour has been studied previously in the context of node based multi-path routing in \cite{Paganini-Mallada} by incorporating derivative action in a price signal that gets communicated (i.e. a form of prediction is needed) and a local stability result was derived. This has also been used in gradient dynamics in game theory in \cite{Shamma-Dynamic}. A control scheme similar to \eqref{eq:multi-path-routing-modified-dynamics} for multi-path routing was proposed in \cite{Lin-Shroff} and studied in discrete and continuous time. In \cite{Lin-Shroff} the scheme differs from \eqref{eq:multi-path-routing-modified-dynamics} in that the $x_j$ variables are updated instantaneously. In our context this would be
	\begin{equation}
	x(t)=\operatorname*{argmax}_{x\ge0, Ax\le C}\varphi'(x'(t),x,y(t)).
	\end{equation}
\end{remark}

\begin{figure}[!h]
	\centering
\resizebox{2cm}{2cm}{%
	\begin{tikzpicture}[auto,node distance=2cm,
	thick,
	source/.style={circle,fill=blue!20,draw,font=\sffamily\Large\bfseries},
	dest/.style={circle,fill=red!20,draw,font=\sffamily\Large\bfseries},
	link1/.style={dotted,->},
	link2/.style={->},scale=0.50]
	\node[source] (1) at (0,0) {$1$};
	\node[dest] (3) [below of=1] {$3$};
	\node[source] (2) [right of=1] {$2$};
	\node[dest] (4) [below of=2] {$4$};
	\draw [link1] (1) to [bend right=15] (2);
	\draw [link1] (2) to [bend right=15] (4);
	\draw [link1] (1) to [bend left=15] (3);
	\draw [link1] (3) to [bend left=15] (4);
	\draw [link2] (2) to [bend right=15] (1);
	\draw [link2] (1) to [bend right=15] (3);
	\draw [link2] (2) to [bend left=15] (4);
	\draw [link2] (4) to [bend left=15] (3);
	\end{tikzpicture}
}
	\caption{{\color{black}First} example network. Sources at $1$ and $2$ transmit to the destinations $4$ and $3$ respectively. Each has a choice of two routes. Routes associated with the source at $1$ are dotted lines, while those associated with the source at $2$ are solid lines.}
	\label{fig:network}
\end{figure}

\begin{figure}[!h]
\centering	\includegraphics[keepaspectratio=true,width=0.4\textwidth]{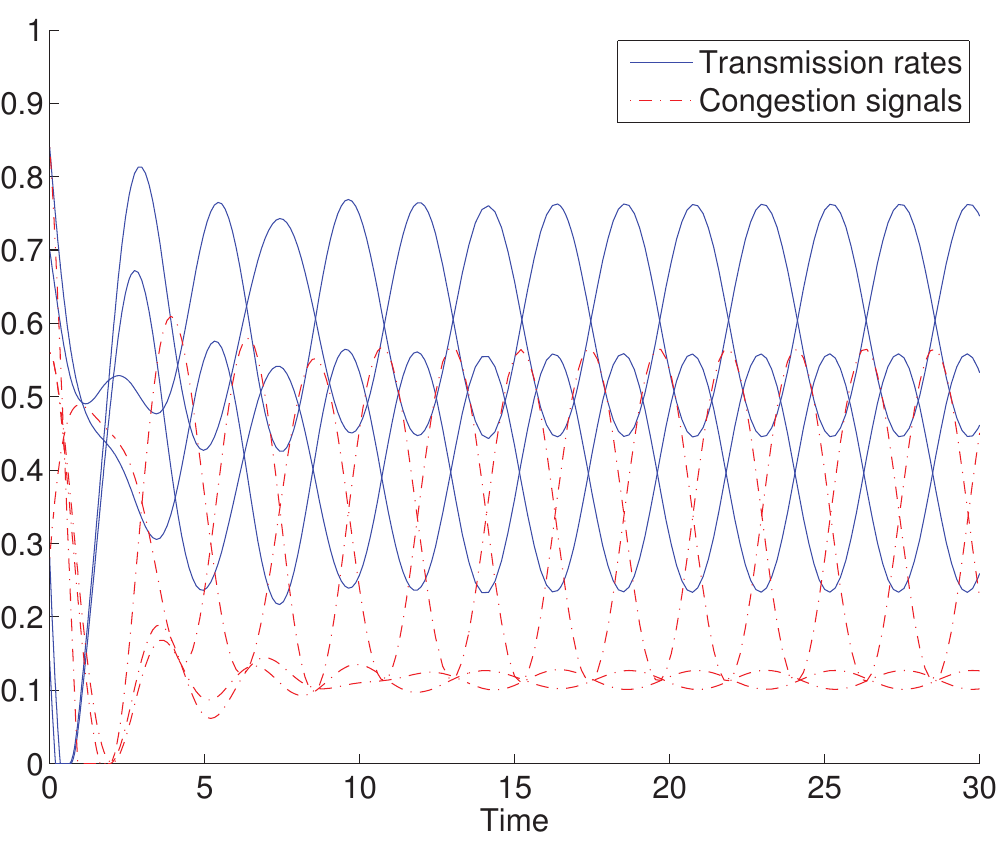}
	\caption{The unmodified dynamics \eqref{eq:multi-path-routing-dynamics} running on the network given in \autoref{fig:network} with all link capacities set to $1$ and the utility functions are $\log(1+x)$ and $1-e^{-x}$ for the sources at $1$ and $2$ respectively. In this network the condition \eqref{eq:algebriac-instability-condition} holds, and there is oscillatory behaviour which does not decay.}
	\label{fig:oscillations1}
\end{figure}
\begin{figure}[!h]
\centering	\includegraphics[keepaspectratio=true,width=0.4\textwidth]{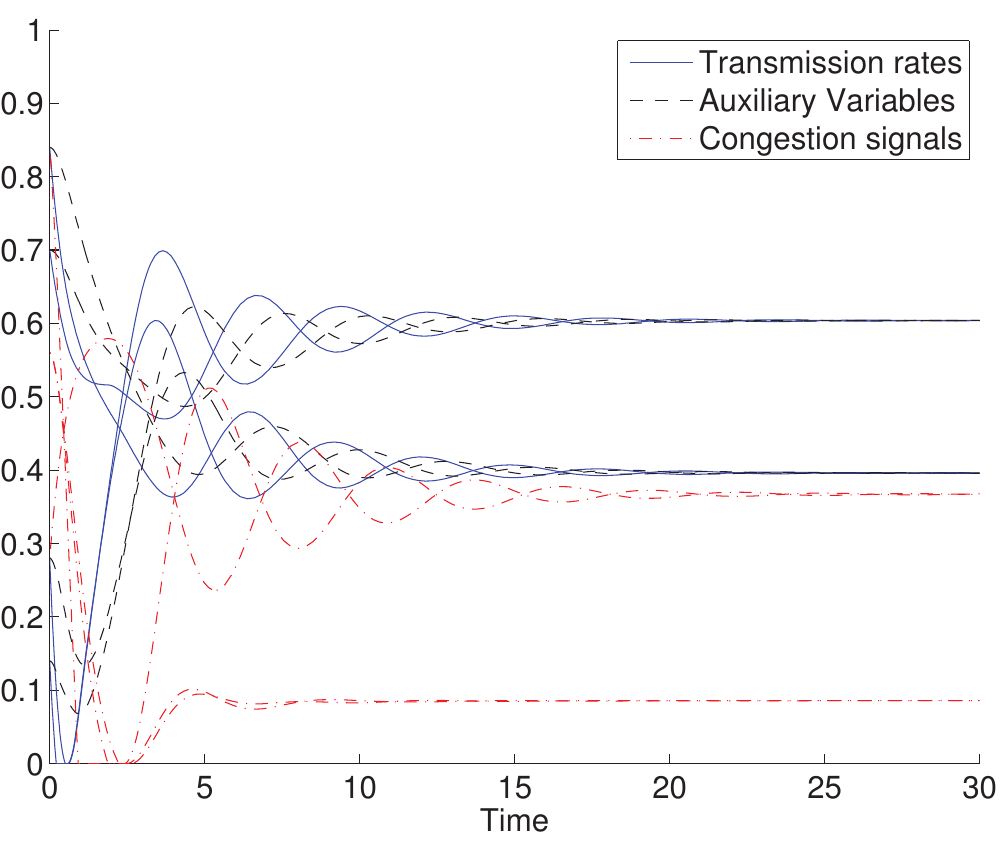}
	\caption{The modified dynamics \eqref{eq:multi-path-routing-modified-dynamics} running on the network given in \autoref{fig:network} with all link capacities set to $1$, $\kappa_j=1$ for all $j$. The utility functions are $\log(1+x)$ and $1-e^{-x}$ for the sources at $1$ and $2$ respectively. In this network the condition \eqref{eq:algebriac-instability-condition} holds, but the modification of the dynamics causes rapid convergence to equilibrium.}
	\label{fig:convergence1}
\end{figure}
\begin{figure}[!h]
	\centering
\resizebox{4.8cm}{2.5cm}{%
	\begin{tikzpicture}[auto,node distance=2cm,
	thick,
	source/.style={circle,fill=blue!20,draw,font=\sffamily\Large\bfseries},
	middle/.style={circle,fill=gray!20,draw,font=\sffamily\Large\bfseries},
	dest/.style={circle,fill=red!20,draw,font=\sffamily\Large\bfseries},
	link1/.style={dotted,->},
	link2/.style={->}]
	\node[source] (1) at (0,0) {$1$};
	\node[middle] (2) [above right of=1] {$2$};
	\node[middle] (3) [right of=2] {$3$};
	\node[middle] (4) [right of=3] {$4$};
	\node[dest] (7) [below right of=4] {$7$};
	\node[middle] (5) [below right of=1] {$5$};
	\node[middle] (6) [below left of=7] {$6$};
	\draw[link2] (1) to [bend left=15] (2);
	\draw[link2] (2) to (3);
	\draw[link2] (3) to (4);
	\draw[link2] (4) to [bend left=15] (7);
	\draw[link2] (1) to [bend right=15] (5);
	\draw[link2] (5) to (6);
	\draw[link2] (6) to [bend right=15] (7);
	\end{tikzpicture}
}
	\caption{A second example network. A single source at $1$ transmits to the destination $7$. It has a choice of two routes.}
	\label{fig:network2}
\end{figure}
\begin{figure}[!h]
\centering	\includegraphics[keepaspectratio=true,width=0.5\textwidth]{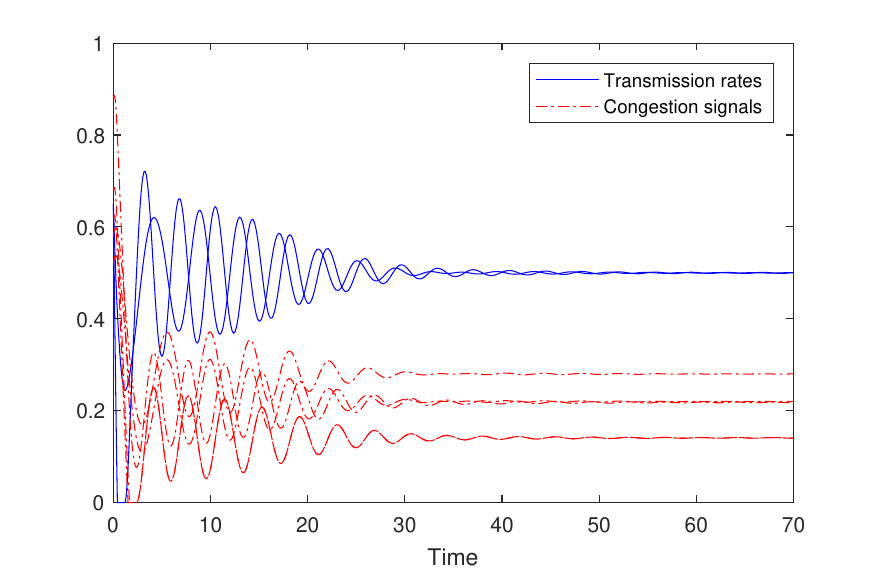}
	\caption{The unmodified dynamics \eqref{eq:multi-path-routing-dynamics} running on the network given in \autoref{fig:network2} with all link capacities set to $0.5$ and the utility function is $\log(1+x)$. The system is asymptotically stable, but displays transient oscillatory behaviour.}
	\label{fig:oscillations2}
\end{figure}
\begin{figure}[!h]
\centering	\includegraphics[keepaspectratio=true,width=0.5\textwidth]{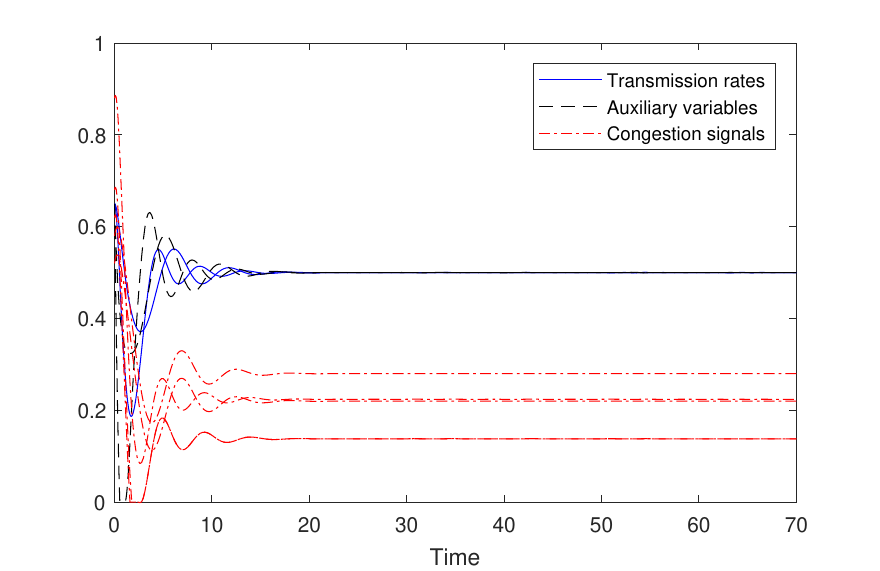}
	\caption{The modified dynamics \eqref{eq:multi-path-routing-modified-dynamics} running on the network given in \autoref{fig:network2} with all link capacities set to $0.5$, $\kappa_j=1$ for all $j$ and the utility function is $\log(1+x)$. The oscillatory behaviour of the unmodified dynamics in \autoref{fig:oscillations2} is damped, and the system rapidly converges to equilibrium.}
	\label{fig:convergence2}
\end{figure}

\subsubsection{Numerical {\color{black}examples}}
\label{sec:numerics}

In this {\color{black}subsection} we present numerical simulations to illustrate {\color{black}the results described above}. We consider the two networks in \autoref{fig:network} and \autoref{fig:network2}.

In \autoref{fig:oscillations1} and \autoref{fig:convergence1} we use the network in \autoref{fig:network} with capacities all set to $1$. The utility functions were chosen as $\log(1+x)$ and $1-e^{-x}$ for the sources at $1$ and $2$ respectively. The parameters $\kappa_j$ were all set to $1$. This network satisfies the condition \eqref{eq:algebriac-instability-condition} and this is apparent in the oscillating modes of the unmodified dynamics \eqref{eq:multi-path-routing-dynamics}, shown in \autoref{fig:oscillations1}, that do not decay. However, when we apply the modified dynamics \eqref{eq:multi-path-routing-modified-dynamics} to this network, we obtain the rapid convergence to the equilibrium shown in \autoref{fig:convergence1}.

In \autoref{fig:oscillations2} and \autoref{fig:convergence2} we use the network in \autoref{fig:network2}. We take the utility function as $\log(1+x)$, and the capacities all set to $0.5$. The parameters $\kappa_j$ were all set to $1$. On this network the original dynamics \eqref{eq:multi-path-routing-dynamics} converge to equilibrium, shown in \autoref{fig:oscillations2}, but there is transient oscillatory behaviour. When we instead implement the modified dynamics \eqref{eq:multi-path-routing-modified-dynamics}, {\color{black}shown in \autoref{fig:convergence2}}, we see an improved performance with more rapid convergence and damping of the oscillations.

\section{Conclusion}
\label{sec:Conclusions}
{\color{black}
In this paper we considered the problem of convergence to a saddle point of a concave convex function via subgradient dynamics that provide a restriction in an arbitrary convex domain. We showed that despite the nonlinear and non-smooth character of these dynamics, \icl{when these have an equilibrium point}  their $\omega$-limit set is comprised of trajectories that are solutions to only linear ODEs.
In particular, we showed that these ODEs are subgradient dynamics on affine subspaces which is a class of dynamics the asymptotic properties of which have been exactly characterized in part I. Various convergence criteria have been deduced from these results that can guarantee convergence to a {\color{black}saddle point.}
Several examples have also been discussed throughout the manuscript to illustrate the results in the paper.
}

\bibliography{Paper}
\bibliographystyle{plain}

\appendices
\section{Proofs of the main results}
\label{sec:proofs}
In this \icl{appendix} we prove the main results of the paper, which are stated in \autoref{sec:Main} \icl{in the main text}.
\subsection{Outline of the proofs}\label{subsec:Main:sketch}
We first give a brief outline of the derivations of the results to improve {\color{black}their readability.} 
\subsubsection{Pathwise stability and convex projections}
\icl{In section \ref{sec:isometry} of this appendix we prove the results described in \autoref{subsec:Main:faces} in the main text.}

We revisit some of the literature on topological dynamical systems \cite{Giacomo-equicontinuity}, quoting a more general result \autoref{thm:equicontinuous-implies-flow}, from which \autoref{cor:pathwise-stable-plus-equilibrium-point-implies-isometries} is deduced.
\icl{These results} allow us to prove the main result of the subsection, \autoref{prop:omega-limit-set-in-face}, using the fact that the convex projection term cannot break the isometry property of the flow on the $\omega$-limit set.

\subsubsection{Subgradient method} {\color{black}
In sections \ref{sec:subgradient-method}, \ref{sec:conv_subgradient-method_proof} \icl{in this appendix} we prove the results in subsections \ref{subsec:Main:subgradient-method}, \ref{subsec:Main:gradmethod}, respectively, \icl{in the main text} using the results in \autoref{subsec:Main:faces}.}

\subsection{Convergence to a flow of isometries}
\label{sec:isometry}
In this section we provide the \icl{proofs of \autoref{cor:pathwise-stable-plus-equilibrium-point-implies-isometries} 
and \autoref{prop:omega-limit-set-in-face}}.

We begin by revisiting the literature on topological dynamical systems, in which a type of incremental stability is studied, and show how this leads to an invariance principle for pathwise stability.
\begin{definition}[Equicontinuous semi-flow]
We say that a flow (resp. semi-flow) $(\phi,X,\rho)$ is \emph{equicontinuous} if for any $x(0)\in X$ and $\varepsilon>0$ there is a $\delta=\delta(x(0),\varepsilon)$ such that if $\rho(x'(0),x(0))<\delta$ then
\begin{equation}
\rho(x(t),x'(t))\le \varepsilon \text{ for all }t\in \mathbb{R} \text{ (resp. $\mathbb{R}_+$)}.
\end{equation}
\end{definition}
\begin{remark}
In the control literature equicontinuity of a semi-flow would correspond to `semi-global non-asymptotic incremental stability', but we shall keep the term equicontinuity for brevity and consistency with \cite{Giacomo-equicontinuity}.
\end{remark}

\begin{definition}[Uniformly almost periodic flow]
We say that a flow $(\phi,X,\rho)$ is \emph{uniformly almost periodic} if for any $\varepsilon>0$ there is a \emph{syndetic} set $A\subseteq\mathbb{R}$, (i.e. $\mathbb{R}=A+B$ for some compact set $B\subseteq\mathbb{R}$), for which
\begin{equation}
\rho(\phi(t,x),x)\le\varepsilon\text{ for all }t\in A, x\in X.
\end{equation}
\end{definition}
For the readers convenience we reproduce the results, \cite[Theorem 8]{Giacomo-equicontinuity} and \cite[Proposition 4.4.]{Ellis-lecture-notes}, that we will use.
\begin{theorem}[G. Della Riccia \cite{Giacomo-equicontinuity}]\label{thm:equicontinuous-implies-flow}
Let $(\phi,X,\rho)$ be an equicontinuous semi-flow and let $X$ be either locally compact or complete. Let $\Omega$ be its $\omega$-limit set. Then $(\phi,\Omega,\rho)$ is an equicontinuous semi-flow of homeomorphisms of $\Omega$ onto $\Omega$. This generates an equicontinuous flow.
\end{theorem}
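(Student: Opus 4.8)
The plan is to treat the forward maps $\phi^t:=\phi(t,\cdot)$ as points of a function space and to exploit equicontinuity through the machinery of the enveloping (Ellis) semigroup. First I would record the consequences of the hypotheses that make $\Omega$ tractable: local compactness or completeness of $X$, together with equicontinuity, guarantees that forward orbit closures are compact, so that $\Omega$ is a non-empty compact set, and the standard $\omega$-limit arguments give forward invariance $\phi(t,\Omega)\subseteq\Omega$. Equicontinuity of the full family $\{\phi^t\}_{t\ge0}$ restricts verbatim to $\Omega$, so $(\phi,\Omega,\rho)$ is already an equicontinuous semi-flow; what remains is to show each $\phi^t|_\Omega$ is a homeomorphism of $\Omega$ onto $\Omega$, which is exactly what is needed to extend the semi-flow to a flow.

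Second, surjectivity I would obtain by a backward-limit construction. Given $p\in\Omega$, write $p=\lim_n \phi(t_n,x)$ with $t_n\to\infty$; for fixed $t\ge0$ and $n$ large the points $\phi(t_n-t,x)$ lie in a compact orbit closure, so along a subsequence $\phi(t_n-t,x)\to q$, and $q\in\Omega$ since $t_n-t\to\infty$. Continuity of $\phi^t$ then yields $\phi^t(q)=\lim_n\phi^t(\phi(t_n-t,x))=\lim_n\phi(t_n,x)=p$, so $\phi^t(\Omega)=\Omega$.

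Third --- and this is where equicontinuity does the essential work --- I would establish injectivity and invertibility by showing that the identity is recurrent under the forward maps. Consider $E:=\overline{\{\phi^t|_\Omega\}_{t\ge0}}$ inside the space of self-maps of $\Omega$ with the topology of uniform convergence; by the Arzel\`a--Ascoli theorem (using compactness of $\Omega$ and equicontinuity) $E$ is compact, its elements are continuous, and, crucially, composition $E\times E\to E$ is jointly continuous because of equicontinuity. Thus $E$ is a compact topological monoid with unit $\mathrm{id}$. Every element of $E$ is surjective, as a uniform limit of the surjective maps $\phi^t$ onto the compact set $\Omega$, and a surjective idempotent of $\Omega$ must equal $\mathrm{id}$ (if $e\circ e=e$ and $e$ is onto, then $e(y)=y$ for all $y$). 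Since a compact topological semigroup contains an idempotent in the closure of the powers $\{\phi^{n t_0}\}_n$, it follows that $\mathrm{id}$ lies in the closure of the forward family, i.e.\ there exist $s_k\to\infty$ with $\phi^{s_k}\to\mathrm{id}$ uniformly on $\Omega$. Given this, injectivity is immediate: if $\phi^t(q_1)=\phi^t(q_2)$, pick $s_k>t$; then $q_i=\lim_k\phi^{s_k}(q_i)=\lim_k\phi^{s_k-t}(\phi^t(q_i))$, and the right-hand sides coincide, forcing $q_1=q_2$. The same near-identity times exhibit the set-theoretic inverse of $\phi^t$ on $\Omega$ as the uniform limit of the continuous maps $\phi^{s_k-t}$, hence continuous, so $\phi^t|_\Omega$ is a homeomorphism.

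Finally I would assemble the pieces: the homeomorphisms $\{\phi^t|_\Omega\}_{t\ge0}$ are invertible, and setting $\phi^{-t}:=(\phi^t)^{-1}$ for $t\ge0$ extends $\phi$ to an action of all of $\mathbb{R}$ that still satisfies the group law and remains equicontinuous, since the extended family lies in the compact group $E$ on which the topology is uniform. This yields the equicontinuous flow of the statement. The main obstacle is precisely the third step --- upgrading the one-sided semi-flow to invertible maps --- because nothing in the semi-flow structure alone prevents the forward maps from collapsing distinct points; it is only the combination of equicontinuity, which gives joint continuity of composition and hence the compact-group structure of $E$, with the compactness of $\Omega$ that rules this out and produces the near-identity return times.
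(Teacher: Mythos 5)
A preliminary remark: the paper does not prove this statement. It is imported verbatim from Della Riccia \cite{Giacomo-equicontinuity} (Theorem 8 there) as an external tool, so there is no in-paper proof to compare yours against; the closest in-paper analogue is the proof of \autoref{cor:pathwise-stable-plus-equilibrium-point-implies-isometries}, which applies this theorem together with \autoref{prop:Ellis} on an exhaustion of $\Omega$ by compact invariant pieces $Y_R$.

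Judged on its own terms, your argument has a genuine gap at its foundation: you assert that the hypotheses make forward orbit closures compact and hence that $\Omega$ is compact, and every subsequent step --- Arzel\`a--Ascoli, compactness of the enveloping semigroup $E$, the existence of the idempotent, and the subsequence extractions in both the surjectivity and injectivity arguments --- leans on this. Neither claim follows from the hypotheses. The translation semi-flow $\phi(t,x)=x+t$ on $\mathbb{R}$ is an equicontinuous (indeed isometric) semi-flow on a complete, locally compact space whose orbit closures are not compact, so ``equicontinuous plus complete/locally compact'' does not give precompact orbits; and $\Omega=\bigcup_{x\in X}\bigcap_{t\ge0}\overline{\phi([t,\infty),x)}$ is the union of the $\omega$-limit sets of \emph{all} trajectories, which need not be compact even when each individual $\omega$-limit set is --- in the paper's own application $\Omega$ is a possibly unbounded convex subset of $\mathbb{R}^{n+m}$, which is precisely why the proof of \autoref{cor:pathwise-stable-plus-equilibrium-point-implies-isometries} must introduce the compact sets $Y_R$ before invoking \autoref{prop:Ellis}. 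Your surjectivity step is the most exposed instance: extracting a convergent subsequence from $\phi(t_n-t,x)$ requires precompactness of the orbit tail of a general $x\in X$, which is an even stronger unjustified assumption. The enveloping-semigroup/idempotent machinery you deploy is the right tool for the \emph{compact} case (and is essentially the content of \autoref{prop:Ellis} as used downstream), but to prove the theorem as stated you would need either to first establish that each individual $\omega$-limit set is compact under these hypotheses and then patch the resulting homeomorphisms together over the union, or to replace the compactness-based limit extractions with arguments that exploit completeness or local compactness of $X$ directly.
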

The backwards flow given by \autoref{thm:equicontinuous-implies-flow} is only unique on $\Omega$, (see \autoref{rem:backwards-flow-interpretation} which also applies here).
\begin{proposition}[R. Ellis \cite{Ellis-lecture-notes}]\label{prop:Ellis}
Let $(\phi,X,\rho)$ be a flow, with $X$ compact. Then the following are equivalent:
\begin{enumerate}[(i)]
\item The flow is equicontinuous.
\item The flow is uniformly almost periodic.
\end{enumerate}
\end{proposition}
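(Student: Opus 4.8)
The plan is to prove the two implications separately, exploiting throughout that $X$ is a compact metric space, so that pointwise equicontinuity automatically upgrades to uniform equicontinuity and the Arzel\`a--Ascoli theorem is available. For the direction (i)$\Rightarrow$(ii) I would first replace $\rho$ by the orbit-supremum metric $\rho^*(x,y)=\sup_{t\in\mathbb{R}}\rho(\phi(t,x),\phi(t,y))$. Compactness makes $\rho^*$ finite, and equicontinuity forces $\rho^*\to 0$ whenever $\rho\to 0$ (while $\rho^*\ge\rho$ trivially), so $\rho^*$ generates the same topology as $\rho$; by construction every time-$t$ map $\phi(t,\cdot)$ is an isometry of $(X,\rho^*)$. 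Thus the orbit $\{\phi(t,\cdot):t\in\mathbb{R}\}$ lies in the isometry group of a compact space, and by Arzel\`a--Ascoli its closure $G$ in the topology of uniform convergence is compact. The flow identity $\phi(t,\cdot)\circ\phi(s,\cdot)=\phi(t+s,\cdot)$ makes the orbit a commutative group, and its closure $G$ is then a compact abelian topological group of homeomorphisms; the map $\pi:t\mapsto\phi(t,\cdot)$ is a continuous homomorphism $\mathbb{R}\to G$ with dense image.

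Given $\varepsilon>0$ I would then set $U=\{g\in G:\sup_x\rho(g(x),x)<\varepsilon\}$, an open neighbourhood of the identity, and $A=\pi^{-1}(U)$, which is exactly the set of times realizing the uniform almost-periodicity estimate. To show $A$ is syndetic, choose an open neighbourhood $V$ of the identity with $VV^{-1}\subseteq U$, cover the compact group by finitely many translates $G=h_1V\cup\dots\cup h_nV$, and use density of $\pi(\mathbb{R})$ to select times $t_i$ with $\pi(t_i)\in h_iV$. For an arbitrary $s\in\mathbb{R}$ one has $\pi(s)\in h_iV$ for some $i$, and commutativity of $G$ then gives $\pi(s-t_i)=\pi(s)\pi(t_i)^{-1}\in VV^{-1}\subseteq U$, so $s-t_i\in A$. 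Hence $\mathbb{R}=A+\{t_1,\dots,t_n\}$ with the second set compact, which is precisely syndeticity.

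For (ii)$\Rightarrow$(i) I would argue directly. Given $\varepsilon>0$, apply uniform almost periodicity at level $\varepsilon/3$ to obtain a syndetic set $A$ with $\mathbb{R}=A+B$, $B$ compact, and $\rho(\phi(a,z),z)\le\varepsilon/3$ for all $a\in A$ and $z\in X$. Since $\phi$ is continuous on the compact set $B\times X$ it is uniformly continuous there, yielding a single $\delta>0$ such that $\rho(x,y)<\delta$ implies $\rho(\phi(b,x),\phi(b,y))<\varepsilon/3$ for every $b\in B$. For any $t$, decompose $t=a+b$ with $a\in A$, $b\in B$; applying the almost-periodicity bound at the points $\phi(b,x)$ and $\phi(b,y)$ together with $\phi(a+b,\cdot)=\phi(a,\phi(b,\cdot))$, a three-term triangle inequality bounds $\rho(\phi(t,x),\phi(t,y))$ by $\varepsilon$ whenever $\rho(x,y)<\delta$. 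As $\delta$ does not depend on $t$, this is uniform equicontinuity, hence equicontinuity on compact $X$.

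The hard part will be the (i)$\Rightarrow$(ii) direction, and within it the precise justification that the uniform-closure $G$ is genuinely a compact \emph{group} acting by homeomorphisms rather than merely a semigroup: one must use that isometries of a compact space are surjective and cancellative so that the algebraic group structure survives in the closure, and that the group operation and inversion are continuous in the uniform topology. Commutativity of $G$ is also essential, since it is what eliminates the conjugation factors $h_i(\cdot)h_i^{-1}$ in the syndeticity argument. By contrast, the (ii)$\Rightarrow$(i) direction is essentially bookkeeping with the triangle inequality once uniform continuity of $\phi$ over the compact time-window $B$ has been invoked.
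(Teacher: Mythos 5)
The paper does not prove this proposition at all: it is imported verbatim from the literature (``for the readers convenience we reproduce the results \dots\ that we will use'', citing \cite[Proposition 4.4]{Ellis-lecture-notes}), so there is no in-paper argument to compare against. Judged on its own, your proof is the standard enveloping-group argument and it is correct. The direction (i)$\Rightarrow$(ii) via the orbit-sup metric $\rho^*$, the compactness of the closure $G$ of $\{\phi(t,\cdot)\}_{t\in\mathbb{R}}$ in the uniform topology by Arzel\`a--Ascoli, and the syndeticity of $\pi^{-1}(U)$ obtained by covering the compact abelian group $G$ with finitely many translates of a small neighbourhood is exactly how this equivalence is usually established; the converse direction is, as you say, a three-epsilon argument resting on uniform continuity of $\phi$ on the compact window $B\times X$. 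Two points you correctly flag deserve the care you promise them: (a) that each $\phi(t,\cdot)$ is a $\rho^*$-isometry uses the substitution $s\mapsto s+t$ being a bijection of $\mathbb{R}$, i.e.\ it is precisely here that the hypothesis of a \emph{flow} rather than a semi-flow enters (consistent with the proposition being stated only for flows, while the paper's Theorem~\ref{thm:equicontinuous-implies-flow} is what handles the semi-flow case); and (b) the closure $G$ must be shown to be a genuine compact topological group, using that isometries of a compact metric space are surjective and that composition and inversion are uniformly continuous on equicontinuous families. Both are standard and your sketch of how to handle them is sound, as is your observation that commutativity of $G$ is what kills the conjugation by $h_i$ in the syndeticity step. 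No gaps beyond these acknowledged routine verifications.
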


In our case we study pathwise stability which is a particular form of equicontinuity. We prove stronger results in this special case.
\begin{proof}[Proof of \autoref{cor:pathwise-stable-plus-equilibrium-point-implies-isometries}]
By \autoref{thm:equicontinuous-implies-flow} $(\phi,\Omega,d)$ is an equicontinuous flow with an equilibrium point $\mathbf{\bar{z}}$. Let $R>0$ be arbitrary, and define
\begin{equation}
Y_R=\left\{\mathbf{z}(0)\in \Omega: \sup_{t\in\mathbb{R}}d(\mathbf{z}(t),\mathbf{\bar{z}})\le R\right\}.
\end{equation}
As the flow is equicontinuous, $Y_R$ is a closed bounded subset of $\mathbb{R}^{n+m}$ and hence compact, and moreover, the union of the sets $Y_R$ over $R\ge0$ is $\Omega$. By \autoref{prop:Ellis} the flow $(\phi,Y_R,d)$ is uniformly almost periodic. By pathwise stability, $d:Y_R\times Y_R\to \mathbb{R}$ is a non-increasing along the direct product flow, and is a continuous function on a compact set. Hence we have the inequality, for any two points $\mathbf{z}(0),\mathbf{z}'(0)\in Y_R$,
\begin{equation}
\begin{aligned}
\lim_{t\to-\infty}&d(\mathbf{z}(t),\mathbf{z}'(t))= \sup_{t\in\mathbb{R}}d(\mathbf{z}(t),\mathbf{z}'(t))\\
&\ge \inf_{t\in\mathbb{R}}d(\mathbf{z}(t),\mathbf{z}'(t))= \lim_{t\to\infty}d(\mathbf{z}(t),\mathbf{z}'(t)).
\end{aligned}
\end{equation}
We claim that the two limits are equal. Indeed, by uniform almost periodicity there are sequences $t_n\to\infty$ and $t_n'\to-\infty$ as $n\to\infty$ for which
\begin{equation}
0=\lim_{n\to\infty}d(\mathbf{z}(t_n),\mathbf{z}(0))=\lim_{n\to\infty}d(\mathbf{z}(t_n'),\mathbf{z}(0))
\end{equation}
and the analogous limits hold for $\mathbf{z}'$ for the same sequences $t_n,t_n'$. Hence, by continuity of $d$, we have
\begin{equation}
\lim_{t\to-\infty}d(\mathbf{z}(t),\mathbf{z}'(t))=d(\mathbf{z}(0),\mathbf{z}'(0))=\lim_{t\to\infty}d(\mathbf{z}(t),\mathbf{z}'(t)).
\end{equation}
Hence $d(\mathbf{z}(t),\mathbf{z}'(t))$ is constant. By picking $R$ big enough, this holds for any $\mathbf{z}(0),\mathbf{z}'(0)\in\Omega$, which completes the proof that the sub-semi-flow generates a flow of isometries.

It remains to show that $\Omega$ is convex. To this end let $\mathbf{z}(t),\mathbf{z}'(t)$ be two trajectories of $(\phi,\Omega,d)$. Let that $\lambda\in(0,1)$ and define $\mathbf{z}''(t)=\lambda\mathbf{z}(t)+(1-\lambda)\mathbf{z}'(t)$. By the same argument as used in the proof of \cite[{\color{black}\icl{
Proposition 32}}]
{Holding-Lestas-gradient-method-Part-I} we deduce that $\mathbf{z}''(t)$ is a trajectory of the original semi-flow, but (as argued above) by uniform almost periodicity of $(\phi,\Omega,d)$ we have a sequence of times $t_n\to\infty$ for which $d(\mathbf{z}(t_n),\mathbf{z}(0))\to0$ as $n\to\infty$ and the same limit for $\mathbf{z}'(t)$. Hence $d(\mathbf{z}''(t_n),\mathbf{z}''(0))\to0$ also, showing that $\mathbf{z}''(0)$ is in the $\omega$-limit set.
\end{proof}

\icl{We now} use the isometry property together with the geometry of the convex projection term to obtain the key result of this section, \autoref{prop:omega-limit-set-in-face}, which states that the limiting dynamics of a pathwise stable ODE restricted to a convex set $K$ have $C^1$ smooth vector field and lie inside one of the faces of $K$.

To prove the theorem we will make use of a simple lemma on faces of convex sets.
\begin{lemma}\label{lem:C-intersects-relint-F}
Let $K\subseteq\mathbb{R}^n$ be non-empty closed and convex and $A\subseteq K$. Let $F$ be the minimal face of $K$ containing $A$, (see \autoref{def:minimal-face}), then $\relint(F)$ intersects $\Conv A$.
\end{lemma}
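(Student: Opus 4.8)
The plan is to produce an explicit point of $\relint(F)\cap\Conv(A)$, namely any point in the relative interior of the convex hull of $A$. Throughout I assume $A\neq\emptyset$ (this is the only case in which the lemma is used, $A$ being the non-empty set of equilibria); then $\Conv(A)$ is a non-empty convex set and so has non-empty relative interior, and I fix some $p\in\relint(\Conv(A))$. By property (e) of faces there is a unique face $G$ of $K$ with $p\in\relint(G)$, and the entire proof reduces to showing $G=F$: once this is established, $p\in\relint(F)$ while also $p\in\Conv(A)$, giving the claim.

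To prove $G=F$ I would establish the two inclusions separately, each time using the same two ingredients: the prolongation property of relative interiors (for a convex set $C$ and $p\in\relint(C)$, every $x\in C$ with $x\ne p$ admits a $y\in C$ with $p$ in the open segment $(x,y)$) together with the defining property of a face, \autoref{def:face}(ii). For $F\subseteq G$ I first show $A\subseteq G$. Given $a\in A\subseteq\Conv(A)$ with $a\neq p$, prolongation inside $\Conv(A)\subseteq K$ yields a $y$ with $p\in\relint([a,y])$; since $p\in\relint(G)\subseteq G$ we have $(\relint[a,y])\cap G\neq\emptyset$, and as $[a,y]\subseteq K$, \autoref{def:face}(ii) forces $[a,y]\subseteq G$, whence $a\in G$ (the case $a=p$ being immediate). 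Thus $A\subseteq G$, and convexity of $G$ gives $\Conv(A)\subseteq G$. So $G$ is a face of $K$ containing $A$, and by minimality (\autoref{def:minimal-face}) $F\subseteq G$.

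For the reverse inclusion $G\subseteq F$ I run the symmetric argument, now using $p\in\Conv(A)\subseteq F$. Given $x\in G$ with $x\neq p$, prolongation inside $G\subseteq K$ gives $y\in G$ with $p\in\relint([x,y])$; since $p\in F$ we have $(\relint[x,y])\cap F\neq\emptyset$, and \autoref{def:face}(ii) applied to the face $F$ yields $[x,y]\subseteq F$, hence $x\in F$. Therefore $G\subseteq F$, and combining the two inclusions gives $G=F$. This places $p$ in $\relint(F)\cap\Conv(A)$ and finishes the proof.

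The step I expect to be the main obstacle is not conceptual but one of careful bookkeeping: confirming that $\relint(\Conv A)$ is non-empty, handling the degenerate cases $a=p$ and $x=p$, and matching the hypotheses of \autoref{def:face}(ii) exactly, in particular that the relevant segment lies in $K$ and that it is its \emph{relative interior}, not merely the segment itself, that meets the face. No deeper idea is required beyond the observation that $\relint(\Conv A)$ is the correct test point and that both inclusions $F\subseteq G$ and $G\subseteq F$ are instances of one symmetric lemma combining segment prolongation with the face definition.
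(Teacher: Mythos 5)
Your proof is correct, and it takes a genuinely different route from the paper's. The paper argues by contradiction: after reducing to the case of convex $A$, it assumes $A\cap\relint(F)=\emptyset$, so that $A$ is covered by relative interiors of proper faces of $F$, and then performs a case analysis on the resulting family of faces (either one contains all the others, contradicting minimality of $F$, or two are incomparable and a point on a connecting segment produces a third face containing both, again a contradiction). You instead give a direct, constructive argument: you exhibit the witness $p\in\relint(\Conv A)$, let $G$ be the unique face with $p\in\relint(G)$ (property (e)), and prove $G=F$ by two symmetric applications of the accessibility (prolongation) property of relative interiors combined with \autoref{def:face}(ii). Your version avoids the lattice-theoretic case split entirely and identifies exactly which face's relative interior is being hit, at the mild cost of invoking the standard prolongation lemma, which the paper does not list among its quoted properties of faces but which is elementary convex analysis. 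All the hypotheses of \autoref{def:face}(ii) are matched correctly in both inclusions (the segments lie in $K$, and it is their relative interiors that meet the face), the degenerate cases $a=p$ and $x=p$ are handled, and the implicit assumption $A\neq\emptyset$ matches the paper's use of the lemma, so there is no gap.
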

The statement of this lemma and the idea behind its proof are illustrated by \autoref{fig:C-intersects-relint-F}.
\begin{figure}[h]
\centering
\begin{tikzpicture}[scale=0.6,anchor=mid,>=latex',line join=bevel,]
\coordinate (a) at (0,0);
\coordinate (b) at (4,0);
\coordinate (c) at (3,2);
\coordinate [label=right:{$F$}] (F) at ($(b)!0.5!(c)$);
\draw (a) -- (b) -- (c) -- (a);
\coordinate [label=above right:{$A$}](d1) at ($(a)!0.3!(b)$);
\coordinate (d2) at ($(a)!0.5!(b)$);
\coordinate (e1) at ($(a)!0.32!(c)$);
\coordinate (e2) at ($(a)!0.6!(c)$);
\fill [opacity=0.5] (d1) -- (d2) to[out=30,in=0] (e2)--(e1) to[out=210,in=180] (d1);
\end{tikzpicture}
\caption{This figure illustrates the claim of \autoref{lem:C-intersects-relint-F}. The triangle $F$ is the minimal face containing the convex set $A$ (shaded region). If $A$ intersects two subfaces of $F$, then, as shown, to be convex it must also intersect the relative interior of $F$.}\label{fig:C-intersects-relint-F}
\end{figure}

\begin{proof}
As faces are convex, the minimal face containing $A$ is the same as the minimal face containing $\Conv A$. So we are free to assume without loss of generality that $A$ is convex. Assume for a contradiction that $A\cap \relint(F)=\emptyset$. Define the set $\mathcal{F}$ as
\begin{equation*}
\{C:C\text{ is a proper face of }F\text{ and }A\cap (\relint C)\ne \emptyset\}.
\end{equation*}
Note that every point in the relative boundary of $F$ lies in the relative interior of some proper face of $F$ by property (e) below \autoref{def:face}. This implies that $\mathcal{F}$ is not empty. Now, either there is a face $C$ in $\mathcal{F}$ that contains all other faces in $\mathcal{F}$, or there are two faces $F_1,F_2\in \mathcal{F}$ such that there is no face $F_3\in \mathcal{F}$ containing both $F_1$ and $F_2$. In the first case, $C$ is a face containing $A$ that is strictly contained in $F$, contradicting minimality of $F$. In the second case let $x_i\in (\relint F_i)\cap A$ for $i=1,2$, (note that $x_1\ne x_2$ by property (e) of faces), and let $x_3$ be some point in the open line segment between $x_1$ and $x_2$. By convexity of $A$, $x_3\in A$. Hence $x_3$ lies in $\relint(F_3)$ for some face $F_3$, and $F_3\in\mathcal{F}$, as otherwise $x_3$ would lie in $\relint(F)$ contradicting the assumption that $(\relint F)\cap A=\emptyset$. We claim that $F_3$ contains both $F_1$ and $F_2$, a contradiction. Indeed, first we note that $x_1,x_2\in F_3$ by property (ii) in \autoref{def:face} as $x_3\in F_3$. Then, as $F_i$ is convex and $x_i\in\relint (F_i)$, $F_i$ can be written as the union of line segments which have $x_i$ as an interior point (i.e. not an end point). But each of these line segments touches $F_3$ at $x_i$, so by \autoref{def:face}(ii) each lies entirely within $F_3$.
\end{proof}

\icl{
\begin{proof}[Proof of \autoref{prop:converge_Omega}]
From the text above the statement of the Corollary 
we have that all trajectories of the semiflow converge to its $\omega$-limit set (denoted as $\Omega$). Also from \autoref{cor:pathwise-stable-plus-equilibrium-point-implies-isometries} we have that $(\phi,\Omega,d)$ defines a flow of isometries.

Convergence to $\Omega$ can now be strengthened to convergence to a solution in $\Omega$ using the same arguments as in the proof of \cite[
Corollary 11]
{Holding-Lestas-gradient-method-Part-I} with set $\mathcal{S}$ replaced with $\Omega$.
\end{proof}
}

\begin{proof}[Proof of \autoref{prop:omega-limit-set-in-face}]
\textbf{Step 1: Identification of the limiting equation.}
First, by \autoref{Incremental-stability-is-preserved-by-convex-projection} and \autoref{cor:pathwise-stable-plus-equilibrium-point-implies-isometries} $(\phi,\Omega,d)$ is a flow of isometries. Now let $F$ be the minimal face that contains $\Omega$, i.e. the intersection of all faces that contain $\Omega$, and $N_F$ be its normal {\color{black}cone (in step 2 of the} proof we will identify this face more precisely). We note that the vector field in \eqref{eq:projected-pathwise-stable-ODE} must be directed parallel to $V$, as otherwise trajectories would leave $F$, contradicting $\Omega\subseteq F$.

It is sufficient to show that if $\mathbf{z}=\mathbf{z}(0)\in\Omega$ with $\mathbf{n}(t)=\mathbf{P}_{N_K(\mathbf{z}(t))}(\mathbf{f}(\mathbf{z}(t)))$ then $\mathbf{n}(t)$ is orthogonal to $F$. If $\mathbf{z}(t)\in\relint K$ then $N_K(\mathbf{z}(t))=N_F$ and the orthogonality holds. Otherwise $\mathbf{z}(t)$ lies in the relative boundary of $F$.

As each solution of the differential equation \eqref{eq:projected-pathwise-stable-ODE} holds only for almost all times $t$ and we wish to consider an uncountably infinite family of solutions, we run the risk of taking an uncountable union of sets of measure zero, (which does not necessarily have zero measure). Avoiding this makes the proof technical. To better communicate the idea of the proof, we shall first give the proof that would work if the differential equations held for all times $t$.

\textbf{Step 1.1: Heuristic (unrigorous) proof.}

Let $C=\Conv\Omega$, then, by the definition of a face, $\Omega\subseteq F$ implies that $C\subseteq F$. From \autoref{lem:C-intersects-relint-F} and the minimality of $F$ we deduce that $C$ must intersect $\relint F$. Thus there are $\mathbf{x}(0),\mathbf{y}(0)\in \Omega$ and $\lambda\in(0,1)$ with $\mathbf{w}=\lambda\mathbf{x}(0)+(1-\lambda)\mathbf{y}(0)\in\relint F$. Set $W=\frac12|\mathbf{x}(t)-\mathbf{z}(t)|^2$. By the isometry property of the flow we know that $\dot{W}=0$ at $t$. \icl{We also have,
 \begin{equation}\label{W-dot-in-convex-projection-case}
 \begin{aligned}
 \dot{W}(t)&=(\mathbf{x}(t)-\mathbf{z}(t))^T(\mathbf{\dot{x}}(t)-\mathbf{\dot{z}}(t))\\
 &=(\mathbf{x}(t)-\mathbf{z}(t))^T(\mathbf{f}(\mathbf{x}(t))-\mathbf{f}(\mathbf{z}(t)))+\\
 &\quad-(\mathbf{x}(t)-\mathbf{z}(t))^T\mathbf{P}_{N_K(\mathbf{x}(t))}(\mathbf{f}(\mathbf{x}(t)))+\\
 &\quad+(\mathbf{x}(t)-\mathbf{z}(t))^T\mathbf{P}_{N_K(\mathbf{z}(t))}(\mathbf{f}(\mathbf{z}(t))).
 \end{aligned}
 \end{equation}
 The first term in \eqref{W-dot-in-convex-projection-case} is non-positive due to the assumption that the {ODE satisfies \eqref{eq:projected-pathwise-stable-ODE}.}
 The other two terms are non-positive due to the definition of the normal cone.
Hence $\dot{W}=0$ implies}
that $(\mathbf{x}-\mathbf{z})^T\mathbf{n}=0$. Similarly we obtain $(\mathbf{y}-\mathbf{z})^T\mathbf{n}=0$. Taking a convex combination of these equalities, we obtain
\begin{equation}
  (\mathbf{w}-\mathbf{z})^T\mathbf{n}=\lambda(\mathbf{x}-\mathbf{z})^T\mathbf{n}+(1-\lambda)(\mathbf{y}-\mathbf{z})^T\mathbf{n}=0+0=0
  \end{equation}
  and as $\mathbf{w}$ is in the relative interior of $F$ this implies that $\mathbf{n}$ is orthogonal to $F$. 

\textbf{Step 1.2: Rigorous proof.}
We now give the fully rigorous proof. We must show that the set of times $t$ when $\mathbf{n}(t)$ is not orthogonal to $F$ is of measure zero. Let $\Omega'$ be a countable dense subset of $\Omega$ that contains $\mathbf{z}(0)$. By invariance of $\Omega$ under the flow $\phi$, the set $\phi(t,\Omega')=\{\phi(t,\mathbf{x}):\mathbf{x}\in\Omega'\}$ is also dense in $\Omega$ for any $t\in\mathbb{R}$. Then the set
  \begin{equation}
  \begin{aligned}
    A=\{t&\in[0,\infty):\exists \mathbf{x}(0)\in \Omega' \text{ such that }\\
       &\dot{\mathbf{x}}(t)\ne \mathbf{f}(\mathbf{x}(t))-\mathbf{P}_{N_K(\mathbf{x}(t))}(\mathbf{f}(\mathbf{x}(t)))\}
  \end{aligned}
  \end{equation}
  is the countable union of measure zero sets, and is hence of measure zero. From the isometry property and by considering $W(t)=\frac12|\mathbf{x}(t)-\mathbf{z}(t)|^2$ with $\mathbf{x}(0)\in\Omega'$, it follows that $(\mathbf{x}(t)-\mathbf{z}(t))^T\mathbf{n}(t)=0$ for all $\mathbf{x}(0)\in\Omega'$ and $t\in[0,\infty)\setminus A$. Thus, for $t\in[0,\infty)\setminus A$, $(\mathbf{x}-\mathbf{z}(t))^T\mathbf{n}(t)=0$ for all $\mathbf{x}$ in a dense subset of $\Omega$, and hence for any $\mathbf{x}\in\Omega$. The proof now follows as step 1.1. above.

  \textbf{Step 2: Identification of the limiting face.}
Finally we will show that the face $F$ defined above is in fact the minimal face $F'$ containing the equilibrium points of the semi-flow $(\phi,K,d)$. We argue by contradiction. If $F\ne F'$ then there must be some trajectory $\mathbf{z}(t)$ in $\Omega$ and a time $t_0$ with $\mathbf{z}(t_0)\in F\setminus F'$. For $T>0$ we define $\mathbf{z}(t;T)=\frac1{2T}\int^T_{-T}\mathbf{z}(t+s)\,ds$. For any finite $T$ this is a convex combination of trajectories in $\Omega$, and as $\Omega$ is convex by \autoref{cor:pathwise-stable-plus-equilibrium-point-implies-isometries}, $t\mapsto \mathbf{z}(t;T)$ is a trajectory in $\Omega$. Next, as the semi-flow is uniformly almost periodic due to \autoref{prop:Ellis} the trajectory $\mathbf{z}(t)$ is an almost periodic function. Therefore, the limit $T\to\infty$ of $\mathbf{z}(t;T)$ exists (see e.g. \cite{Ellis-lecture-notes}), and this limit is clearly a constant ($\mathbf{z}'$ say) independent of $t$. As $\Omega$ is closed, $\mathbf{z}'\in \Omega$ and being a constant, is an equilibrium point of the semi-flow.

To obtain a contradiction we argue that $\mathbf{z}'\not\in F'$ which is impossible as $F'$ contains all equilibrium points. Indeed, this follows as the trajectory $\mathbf{z}(t)$, being almost periodic and passing through $\mathbf{z}(t_0)\in F\setminus F'$ spends a positive proportion of its time in $F\setminus F'$. Therefore, there is a $\delta>0$ such that for any sufficiently large $T$, the average $\mathbf{z}(t;T)$ satisfies $d(\mathbf{z}(t;T),F)\ge\delta$ and this property carries over to the limit~$\mathbf{z}'$.
  \end{proof}

\subsection{Subgradient method}\label{sec:subgradient-method}
In this section we give the proofs of the results of \autoref{subsec:Main:subgradient-method}.

\begin{proof}[Proof of \autoref{thm:subgradient-method-faces}]
We apply \autoref{prop:omega-limit-set-in-face}, {\color{black}noting that \icl{$\mathbf{f}(\mathbf{z})$ in \eqref{gradmethod-convex-domain} satisfies the inequality in \eqref{eq:projected-pathwise-stable-ODE} \cite{rockafellar1970convex}, \cite{goebel2017stability}.}

\textbf{Case (i).} 
This follows directly from \autoref{prop:omega-limit-set-in-face}.

\textbf{Case (ii).} 
As $F$ must contain all $K$-restricted saddle points, it must contain a point in the interior of $K$. The only such face is $K$ itself whose affine span is $\mathbb{R}^{n+m}$ (as $K$ has non-empty interior) which has normal cone $\{\mathbf{0}\}$. Therefore in case (ii) \eqref{eq:projected-ODE-on-face} becomes the gradient method \eqref{gradmethod-fullspace} and \eqref{eq:omega-is-S-cap-K} holds.

The convexity and isometry properties of $\Omega$ stated follow from \autoref{cor:pathwise-stable-plus-equilibrium-point-implies-isometries}.
}
 \end{proof}

\subsection{A general convergence criterion}\label{sec:conv_subgradient-method_proof}
In this section we give the proofs of \autoref{subsec:Main:gradmethod}.
\begin{proof}[Proof of \autoref{thm:subgrad-method-final-convergence-criterion}]
By \autoref{thm:subgradient-method-faces}(i) any solution $\mathbf{z}(t)$ in the $\omega$-limit set of the subgradient method on $K$ solves \eqref{eq:subgradient-method-limiting-solutions}. By using $\mathbf{\Pi}$, the orthogonal projection matrix onto the orthogonal complement of $N_V$, the ODE \eqref{eq:subgradient-method-limiting-solutions} can be written as \eqref{eq:projected-gradient-method}. {\color{black}Noting also the isometry property of the $\omega$-limit set, we have by} \autoref{thm:projected-gradient-method-result} (in {\color{black}Appendix \ref{app:projected-gradient-method}}), {\color{black}that} $\mathbf{z}(t)$ satisfies \eqref{eq:projected-gradient-method-linear-ODE} and \eqref{eq:first-thm-kernel-condition-projected-gradient-method} for all $t\in\mathbb{R}$ and $r\in[0,1]$. Therefore, if there are no non-constant trajectories of the subgradient method on $K$ satisfying these conditions then the $\omega$-limit set consists only of equilibrium points and the subgradient method on $K$ is globally convergent.
\end{proof}
{\color{black}
\begin{proof}[Proof of \autoref{cor:subgradient_linear}]
This follows from \autoref{thm:subgradient-method-faces} and  \autoref{thm:projected-gradient-method-result} using the arguments in the proof of \autoref{thm:subgrad-method-final-convergence-criterion}.
\end{proof}}
%

\section{\color{black}Proofs of the results in \autoref{sec:modification-methods}}\label{sec:proofs-of-examples}
 \subsection{Modification methods}
{\em Proof of \autoref{thm:convergence-of-modification-methods}:}\\
 {\color{black}We prove convergence of each modification method in turn.} 
 \subsubsection{Auxiliary variables method}
 {\color{black}
  \begin{proposition}\label{cor:auxiliary-convergence}
 Let \eqref{modified-varphi} hold, and assume that there exists a $K'$-restricted saddle point. Then the subgradient method \eqref{gradmethod-convex-domain} on $K'$ {\color{black}applied to $\varphi'$} is globally convergent.
 \end{proposition}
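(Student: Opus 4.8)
The plan is to verify the hypotheses of \autoref{thm:subgrad-method-final-convergence-criterion} for the modified data $(\varphi',K')$, i.e.\ to show that no non-constant trajectory of the subgradient method on $K'$ satisfies both \eqref{eq:projected-gradient-method-linear-ODE} and \eqref{eq:first-thm-kernel-condition-projected-gradient-method}. First I would translate coordinates so that the $K'$-restricted saddle is at $\mathbf{0}$; this forces the underlying $K$-restricted saddle to the origin and makes the assumption on $M$ read $\ker(M)\cap\ker(\varphi_{xx}(\mathbf{0}))=\{0\}$. Writing $u=Mx-x'$, I would compute the matrices $\mathbf{A}',\mathbf{B}'$ of \eqref{def-of-A-and-B} for $\varphi'$: the term $\psi(u)$ in \eqref{modified-varphi} adds to the $(x',x)$-block of $\mathbf{B}'$ a strictly negative definite contribution built from $\psi_{uu}$, while leaving the coupling matrix $\mathbf{A}'$ unchanged in the relevant block (since $\psi$ does not involve $y$). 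Because the auxiliary coordinates are unconstrained, the affine subspace $V$ of \autoref{thm:subgrad-method-final-convergence-criterion} splits as $V=\mathbb{R}^{n'}\times V_K$, where $V_K=\affinespan(F_K)$ for the minimal face $F_K$ of $K$ containing the $K$-restricted saddles, and $\mathbf{\Pi}$ acts as the identity on the $x'$-coordinates.

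Next I would take a trajectory $\mathbf{z}(t)=(x'(t),x(t),y(t))$ satisfying \eqref{eq:projected-gradient-method-linear-ODE} and \eqref{eq:first-thm-kernel-condition-projected-gradient-method}; by \autoref{thm:subgradient-method-faces} and \autoref{cor:pathwise-stable-plus-equilibrium-point-implies-isometries} such a trajectory lies in $V$ (so $\mathbf{\Pi}\mathbf{z}=\mathbf{z}$) and is bounded on all of $\mathbb{R}$. Reading off the $x'$-block of the kernel condition \eqref{eq:first-thm-kernel-condition-projected-gradient-method}, on which $\mathbf{\Pi}$ is the identity, the strict concavity of $\psi$ (so $\psi_{uu}\prec0$) forces $x'=Mx$; testing the same condition at $r=0$ and using $\varphi_{xx}(\mathbf{0})\preceq0$ gives $x(t)\in\ker(\varphi_{xx}(\mathbf{0}))$. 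Since the $x'$-row of $\mathbf{A}'(\mathbf{0})$ vanishes, \eqref{eq:projected-gradient-method-linear-ODE} yields $\dot{x}'=0$, and with $x'=Mx$ this gives $M\dot{x}=0$. As $x(t)$ stays in the fixed subspace $\ker(\varphi_{xx}(\mathbf{0}))$, also $\dot{x}\in\ker(\varphi_{xx}(\mathbf{0}))$, so the assumption $\ker(M)\cap\ker(\varphi_{xx}(\mathbf{0}))=\{0\}$ forces $\dot{x}=0$. Hence the whole maximising block $w=(x',x)$ is constant along the trajectory.

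The remaining, and hardest, step is to pass from ``the maximising variables are frozen'' to ``the whole trajectory is frozen''. In contrast to the strictly concave-convex setting of \autoref{cor:convergence-of-subgradient-method-in-strict-case}, the modification does not render $\varphi'$ strictly concave-convex, so one cannot conclude $w=0$ and invoke the projection identity of that proof verbatim. Instead I would use $\mathbf{\Pi}\dot{\mathbf{z}}=\dot{\mathbf{z}}$ (the velocity lies in the direction of $V$) together with the block/skew structure of $\mathbf{A}'(\mathbf{0})$: since $\dot{w}=0$, writing $\dot{\mathbf{z}}=(0,0,\dot{y})$ and expanding $|\dot{\mathbf{z}}|^2=\langle\dot{\mathbf{z}},\mathbf{A}'(\mathbf{0})\mathbf{z}\rangle$ produces the identity $|\dot{y}|^2=-\dot{y}^T q$ with $q=\varphi_{yx}(\mathbf{0})x(t)$ a constant vector. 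Because $q$ is constant this reads $|\dot{y}(t)|^2=-\tfrac{d}{dt}\big(y(t)^T q\big)$, so integration over $[-T,T]$ telescopes to $-(y(T)-y(-T))^T q$, which is bounded uniformly in $T$ since $y$ is bounded. Thus $\int_{-\infty}^{\infty}|\dot{y}|^2\,dt<\infty$; but with $w$ frozen, $y$ obeys a bounded linear flow and is therefore almost periodic, so $\dot{y}$ is square-integrable only if it vanishes identically. Hence $y$ is constant, $\mathbf{z}$ is an equilibrium, and \autoref{thm:subgrad-method-final-convergence-criterion} gives global convergence.

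I expect this last paragraph to be the crux. The algebraic reductions producing $x'=Mx$ and $\dot{x}=0$ are routine once the matrices are written out, and the interplay of $\ker(M)$ with $\ker(\varphi_{xx}(\mathbf{0}))$ is exactly what the hypothesis on $M$ is designed for. The genuine obstacle is that non-strict concavity leaves $x$ a possibly nonzero constant, so the concluding argument for $y$ must be analytic (the telescoping energy identity plus boundedness and almost periodicity of the limiting flow) rather than purely algebraic; I would treat the penalty and constraint-modification cases afterwards by the same criterion, exploiting that their product-structured domains make the analogue of the final step considerably simpler.
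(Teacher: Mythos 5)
Your proposal is correct and follows essentially the same route as the paper: apply \autoref{thm:subgrad-method-final-convergence-criterion}, exploit the block structure of $\mathbf{\Pi}$ coming from the unconstrained auxiliary coordinates, use strict concavity of $\psi$ in the kernel condition to force $x'=Mx$ and $x\in\ker(\varphi_{xx}(\mathbf{0}))$, and then use the hypothesis $\ker(M)\cap\ker(\varphi_{xx}(\mathbf{0}))=\{0\}$ to freeze the maximising block. The only divergence is the final step showing $y$ is constant, where your telescoping identity $|\dot{y}|^2=-\tfrac{d}{dt}(y^Tq)$ combined with boundedness and almost periodicity is a more explicit (and perfectly valid) justification of what the paper dispatches tersely via ``the form of $\mathbf{A}(\mathbf{0})$'' and the orthogonality of the vector field to $\ker(\mathbf{\Pi})$.
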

 \begin{proof}
 We prove global convergence to an equilibrium point by making use of Theorem \ref{thm:subgrad-method-final-convergence-criterion}. In particular, we show that the only solutions of the subgradient method applied to $\phi'$, which satisfy both \eqref{eq:projected-gradient-method-linear-ODE} and \eqref{eq:first-thm-kernel-condition-projected-gradient-method}, are equilibrium points.

 Without loss of generality, we assume, by a translation of coordinates, that
 $\mathbf{\bar{z}}'=(M\bar{x},\bar{x},\bar{y})=\mathbf{0}$ is an equilibrium point.
 Since the auxiliary variables are unconstrained the orthogonal complement of $N_V$ in Theorem \ref{thm:subgrad-method-final-convergence-criterion} is a subspace of the form $\mathbb{R}^{n'}\times V'$ where $V'\subseteq\mathbb{R}^{n+m}$ is an affine subspace.

 Let $\mathbf{\Pi}$ be the orthogonal projection matrix onto the subspace $\mathbb{R}^{n'}\times V'$. We decompose $\mathbf{\Pi}$ on $\mathbb{R}^{n'}\times\mathbb{R}^{n+m}$ as
  \begin{equation}
  \mathbf{\Pi}=\begin{bmatrix}
  I&\mathbf{0}\\
  \mathbf{0}&\mathbf{\Pi}'
  \end{bmatrix}.
  \end{equation}
  Now let $\mathbf{z}(t)=(x'(t),x(t),y(t))$ be a solution of the modified subgradient method that satisfies \eqref{eq:projected-gradient-method-linear-ODE} and \eqref{eq:first-thm-kernel-condition-projected-gradient-method}, and let $(\tilde{x}(t),\tilde{y}(t))=\mathbf{\Pi}'(x(t),y(t))$.
The remainder of the proof is carried out in three steps.

  {\bfseries\boldmath Step 1: $x'(t)$ is constant.}
  By the form of $\mathbf{A}(\mathbf{0})$ in \eqref{eq:projected-gradient-method-linear-ODE}
  we deduce that $\dot{x}'(t)=0$.

  {\bfseries\boldmath Step 2: $ \tilde{x}(t)$ and $\tilde{y}(t)$ are constant.}
  From the condition \eqref{eq:first-thm-kernel-condition-projected-gradient-method}
  that $\icl{\mathbf{\Pi}}\mathbf{B}(r\mathbf{z})\mathbf{\Pi}\mathbf{z}=0$ for $r\in[0,1]$, we have that
  \begin{equation}
  0=\mathbf{z}^T\mathbf{\Pi}\mathbf{B}(r\mathbf{z})\mathbf{\Pi}\mathbf{z}=u^T\psi_{uu}u+\tilde{x}^T\varphi_{xx}\tilde{x}-\tilde{y}^T\varphi_{yy}\tilde{y}
  \end{equation}
  where $\psi_{uu}$ is the Hessian matrix of $\psi$ evaluated at $u=M\tilde{x}-x'$. As each term is non-positive and $\psi$ is strictly concave we deduce that $M\tilde{x}-x'=0$ and $\tilde{x}\in\ker(\varphi_{xx}(\mathbf{0}))$.  Thus $M\tilde{x}(t)$ is constant. By the condition that $\ker(M)\cap\ker(\varphi_{xx})=\{0\}$ we deduce that $\tilde{x}(t)$ is constant. Then the form of $\mathbf{A}(\mathbf{0})$ allows us to deduce that $\tilde{y}(t)$ is also constant.

  {\bfseries\boldmath Step 3: $x(t)$ and $y(t)$ are constant.}
  The vector field in \eqref{eq:projected-gradient-method-linear-ODE} is orthogonal to $\ker(\mathbf{\Pi})$, so that $(\tilde{x}(t),\tilde{y}(t))$ being constant implies that $(x(t),y(t))$ are constant.

This completes the proof of convergence to an equilibrium point of the subgradient method applied to $\phi'$ .
\end{proof}
 }

 \subsubsection{Penalty {\color{black}function method}}

 \begin{proposition}\label{cor:penalty-function-method-converges-convex-domain}
 Let $K\subseteq\mathbb{R}^{n+m}$ be non-empty closed and convex \icl{as in \eqref{set:K}}. Let \eqref{lag-conds-fullspace}, \eqref{penalty-function-method} hold, and assume that there exists a $K$-restricted saddle point. Then the subgradient method \eqref{gradmethod-convex-domain} on $K$ {\color{black}applied to $\varphi'$} is globally convergent.
 \end{proposition}
 \begin{proof}
{\color{black} Without loss of generality, we may assume by a translation of coordinates}
 that $\mathbf{0}$ is a $K$-restricted saddle point. We apply \autoref{thm:subgrad-method-final-convergence-criterion} and let $F,V,\mathbf{\Pi}$ be as in \autoref{thm:subgrad-method-final-convergence-criterion} and $\mathbf{z}(t)=(x(t),y(t))$ be a trajectory of the subgradient method on $K$ satisfying \eqref{eq:projected-gradient-method-linear-ODE} and \eqref{eq:first-thm-kernel-condition-projected-gradient-method} for all $t\in\mathbb{R}$ and $r\in[0,1]${\color{black}. Define} $(\tilde{x}(t),\tilde{y}(t))=\tilde{\mathbf{z}}(t)=\mathbf{\Pi}\mathbf{z}(t)$. We compute that
 \begin{equation}
 \mathbf{A}(\mathbf{0})=\begin{bmatrix}
 0&g_x(0)^T\\
 -g_x(0)&0
 \end{bmatrix}.
 \end{equation}

 {\bfseries\boldmath Step 1: $g_x(0)\tilde{x}(t)=0$.}

  The condition \eqref{eq:first-thm-kernel-condition-projected-gradient-method} implies that the following expression is zero for all $s\in[0,1]$,
  \begin{equation}\label{eq:cor:penalty-function-method-converges-convex-domain-1}
  \tilde{\mathbf{z}}^T\mathbf{B}(s\mathbf{z}){\color{black}\tilde{\mathbf{z}}}
  =\tilde{x}^T\varphi_{xx}\tilde{x}+[g_x\tilde{x}]^T\psi_{uu}[g_x\tilde{x}]+\psi_u(\tilde{x}^Tg_{xx}\tilde{x})
  \end{equation}
  where $\varphi_{xx}$ is evaluated at $s\mathbf{z}$, with $g_x,g_{xx}$ at $sx$, and $\psi_{uu},\psi_{u^k}$ at $u=g(sx)$, and where $x^Tg_{xx}x$ is the vector with $i$th component $x^Tg^i_{xx}x$ where $g=[g^1,\dotsc ,g^m]^T$. All the terms are non-positive by the assumptions on $\psi$ and $\varphi$. Strict concavity of $\psi$ and that \eqref{eq:cor:penalty-function-method-converges-convex-domain-1} vanishes for all $s\in[0,1]$ implies that $g_x(sx)\tilde{x}=0$ for all $s\in[0,1]$. In particular $g_x(0)\tilde{x}(t)=0$.

  {\bfseries\boldmath Step 2: $\tilde{x}(t)$ is constant.}


\icl{  Let $\mathbf{\Pi}$ be decomposed on $\mathbb{R}^n\times\mathbb{R}^m$ as
 \begin{equation}\label{eq:Pi-decomposition}
 \mathbf{\Pi}=\begin{bmatrix}
 \Pi_{11}&\Pi_{12}\\
 \Pi_{21}&\Pi_{22}
 \end{bmatrix}.
 \end{equation}}
  Then $\tilde{x},\tilde{y}$ satisfy
  \begin{align}\label{eq:penalty_tilde}
  \dot{\tilde{x}}&=\Pi_{11}g_x(0)^T\tilde{y}&\quad \dot{\tilde{y}}&=-\Pi_{21}g_x(0)^T\tilde{y}.
  \end{align}

  Taking the time derivative of $g_x(0)\tilde{x}=0$ we obtain $g_x(0)\Pi_{11}g_x(0)^T\tilde{y}=0$. As $\Pi_{11}$ is positive semi-definite, $\ker(g_x(0)\Pi_{11}g_x(0)^T)=\ker(\Pi_{11}g_x(0)^T)$, and hence $\dot{\tilde{x}}=\Pi_{11}g_x(0)^T\tilde{y}=0$ and $\tilde{x}(t)$ is constant.

  {\bfseries\boldmath Step 3: $\tilde{y}(t)$ is constant.}

 The relation $\mathbf{\Pi}\dot{\tilde{\mathbf{z}}}=\dot{\tilde{\mathbf{z}}}$ implies that $\Pi_{11}\dot{\tilde{x}}+{\color{black}\Pi_{12}}\dot{\tilde{y}}=\dot{\tilde{x}}=0$ and $0=\Pi_{12}\dot{\tilde{y}}=-\Pi_{12}\Pi_{21}g_x(0)^T\tilde{y}$. Therefore, again, as $\Pi_{12}\Pi_{21}$ is positive semi-definite we have $\tilde{y}^Tg_x(0)\Pi_{12}\Pi_{21}g_x(0)^T\tilde{y}=0$ and $\Pi_{21}g_x(0)^T\tilde{y}=0=-\dot{\tilde{y}}$, which implies $\tilde{y}$ is constant\footnote{\icl{Note that step 3 could also be proved from the fact that the product structure of $K$ implies that $V=\text{aff}(F)$ must also decompose into $V=V_x\times V_y$ with $V_x\subseteq\mathbb{R}^{n}$, $V_y\subseteq\mathbb{R}^m$ affine subspaces, thus implying  $\Pi_{12}=\Pi_{21}=0$ (this structure of $V$ is used in the proof of \autoref{cor:contraint_mod}).}}.

\icl{The fact that $x(t)$, $y(t)$ are constant can be deduced as in Step 3 of the proof of \autoref{cor:auxiliary-convergence}.}
 \end{proof}

 \subsubsection{Constraint modification method}
 We first consider the case without constraints. The proof below shows that the method works by disrupting the linear structure of the oscillating solutions by changing $\mathbf{A}(\mathbf{z})$ to ensure it is not equal to $\mathbf{A}(\mathbf{0})$, (where $\mathbf{0}$ is a saddle).

 \begin{proposition}\label{prop:constraint-modification-method}
 Let \eqref{constraint-modification-method-assumptions} hold and $\bar{\mathcal{S}}\ne\emptyset$. Then $\mathcal{S}=\bar{\mathcal{S}}$ and the gradient method \eqref{gradmethod-fullspace} {\color{black}applied to $\varphi'$} is globally convergent.
 \end{proposition}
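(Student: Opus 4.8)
The plan is to recognise that this is the \emph{unconstrained} gradient method and to invoke the characterisation of $\mathcal{S}$ from Part~I \cite{Holding-Lestas-gradient-method-Part-I} (equivalently, \autoref{thm:subgrad-method-final-convergence-criterion} specialised to $K=\mathbb{R}^{n+m}$, where the projection $\mathbf{\Pi}$ is the identity): a trajectory $\mathbf{z}(t)=(x(t),y(t))$ lies in $\mathcal{S}$ precisely when it solves the linear ODE $\dot{\mathbf{z}}=\mathbf{A}(\mathbf{0})\mathbf{z}$ and satisfies, for all $r\in[0,1]$ and $t\in\mathbb{R}$, the kernel conditions $\mathbf{z}(t)\in\ker\mathbf{B}(r\mathbf{z}(t))\cap\ker(\mathbf{A}(r\mathbf{z}(t))-\mathbf{A}(\mathbf{0}))$. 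Since $\bar{\mathcal{S}}\subseteq\mathcal{S}$ is immediate, it suffices to show every such trajectory is constant. By a translation of coordinates I would place the relevant saddle point at $\mathbf{0}$, so that $g(\mathbf{0})=0$. The first step is to record the second derivatives of $\varphi'$: because $\varphi'_y=\psi(g(x))$ is independent of $y$ one has $\varphi'_{yy}\equiv0$, while the $j$-th column of $\varphi'_{xy}(\mathbf{z})$ equals $\psi^j_u(g^j(x))\nabla g^j(x)$. The dependence on $x$ through the strictly decreasing factor $\psi^j_u(g^j(\cdot))$ is exactly the feature that breaks the degeneracy present in the unmodified problem.

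The crux is to exploit the condition $\mathbf{z}\in\ker(\mathbf{A}(r\mathbf{z})-\mathbf{A}(\mathbf{0}))$. Reading off its lower block gives $(\varphi'_{yx}(r\mathbf{z})-\varphi'_{yx}(\mathbf{0}))x=0$ for all $r\in[0,1]$, whose $j$-th component is $\psi^j_u(g^j(rx))\,g^j_x(rx)x=\psi^j_u(0)\,g^j_x(\mathbf{0})x$. I would recognise the left-hand side as $\tfrac{d}{dr}\psi^j(g^j(rx))$, so the condition states precisely that $r\mapsto\psi^j(g^j(rx))$ is affine on $[0,1]$; hence its second derivative vanishes. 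Expanding,
\begin{equation*}
0=\frac{d^2}{dr^2}\psi^j(g^j(rx))=\psi^j_{uu}\big(g^j_x(rx)x\big)^2+\psi^j_u\,\big(x^Tg^j_{xx}(rx)x\big).
\end{equation*}
Here $\psi^j_{uu}<0$, $\psi^j_u\ge0$, and $g^j_{xx}\preceq0$ by concavity of $g^j$, so the two summands are separately non-positive and must each vanish. Since $\psi^j_{uu}<0$, the first forces $g^j_x(rx)x=0$ for all $r$, and in particular $g_x(\mathbf{0})x=0$. This is the main obstacle of the argument: everything hinges on turning a single scalar kernel equation into the statement that $\psi^j\circ g^j$ is affine along the segment, after which the strict concavity of each $\psi^j$ together with the concavity of each $g^j$ pins down $g_x(\mathbf{0})x=0$. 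Note that only the $\mathbf{A}$-kernel condition is needed here, matching the intuition that the modification works by making $\mathbf{A}(\mathbf{z})$ genuinely position dependent.

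With $g_x(\mathbf{0})x=0$ in hand the conclusion is quick. The $y$-equation of the linear ODE reads $\dot{y}=-\varphi'_{yx}(\mathbf{0})x$, whose $j$-th component is $-\psi^j_u(0)\,g^j_x(\mathbf{0})x=0$, so $y(t)$ is constant. Then $\dot{x}=\varphi'_{xy}(\mathbf{0})y$ is a constant vector, and since every trajectory in $\mathcal{S}$ remains at a fixed distance from the saddle point and is therefore bounded, a non-zero constant velocity is impossible, forcing $\dot{x}=0$. Hence $\mathbf{z}(t)$ is constant, i.e.\ an equilibrium of the gradient method, which for a concave-convex function is exactly a saddle point. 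This yields $\mathcal{S}=\bar{\mathcal{S}}$, and global convergence of the gradient method applied to $\varphi'$ then follows from the results of Part~I \cite{Holding-Lestas-gradient-method-Part-I}.
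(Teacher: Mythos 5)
Your proof is correct and follows essentially the same route as the paper's: both invoke the Part~I characterisation of $\mathcal{S}$, differentiate the kernel condition $\mathbf{z}\in\ker(\mathbf{A}(r\mathbf{z})-\mathbf{A}(\mathbf{0}))$ along the segment $r\mapsto rx$ to obtain a sum of two non-positive terms whose vanishing (via strict concavity of $\psi^j$) forces $g_x(\mathbf{0})x=0$, and then conclude $\dot{y}=0$ and $\dot{x}=0$ from boundedness of trajectories in $\mathcal{S}$. The only cosmetic deviations are your treatment of $\psi^j$ as a function of the scalar $g^j$ rather than the full vector $g$, and the unnecessary (and not fully justified) claim that $g(\mathbf{0})=0$, neither of which affects the argument.
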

 \begin{proof}
 Without loss of generality we may assume that $\mathbf{0}$ is a saddle point of $\varphi$.
 We use the classification of $\mathcal{S}$ given \icl{by \cite[
 Theorem 12]
 {Holding-Lestas-gradient-method-Part-I}} and use the notation therein. We first compute,
 \begin{equation}
  \mathbf{A}(\mathbf{z})=
 \begin{bmatrix}
  0& (\psi_gg_x)^T\\
 -\psi_gg_x&0
 \end{bmatrix}.
 \end{equation}
{\color{black} Let $\mathbf{z}(t)=(x(t),y(t))\in\mathcal{S}$} then we have
 \begin{equation} 
 \begin{aligned}
 0&=\frac{d}{ds}[(\psi_g^i(g(sx))^Tg_x(sx)x]_{s=0}\text{ for }i=1,\dotsc,m
 \end{aligned}
 \end{equation}
 Then by applying the chain rule we obtain
 \begin{equation}
 0=[g_x(0)x]^T\psi^i_{gg}(0)[g_x(0)x]+\psi^i_g(0)^T(x^Tg_{xx}(0)x),
 \end{equation}
 where $x^Tg_{xx}(0)x$ is the vector with components $x^Tg^i_{xx}x$ where $g=[g^1,\dotsc g^m]^T$.
 All the terms are non-positive due to the assumptions on $\psi$ and $g$. As $\psi_{gg}^i<0$ we have $g_x(0)x=0$. Hence $\dot{y}=0$ and {\color{black}therefore $y$} is constant. As $|x|^2+|y|^2$ is also constant this means that $\dot{x}$ is zero. {\color{black}Therefore $\mathcal{S}=\bar{\mathcal{S}}$} and the gradient method is globally convergent.
 \end{proof}
 Now we extend the stability to the subgradient method on sets which have a product structure,
 {\color{black}by making use of \autoref{cor:subgradient-method-face-criterion}.}
 \begin{corollary}\label{cor:contraint_mod}
 \icl{Let $K\subseteq\mathbb{R}^{n+m}$ be non-empty closed and convex \icl{as in \eqref{set:K}}.}
 Let \eqref{constraint-modification-method-assumptions} hold and there be a $K$-restricted saddle point. Then the subgradient method \eqref{gradmethod-convex-domain} on $K$ {\color{black}applied to $\varphi'$} is globally convergent.
 \end{corollary}
 \begin{proof}
 By \autoref{cor:subgradient-method-face-criterion} it suffices to prove that the subgradient method on $\affinespan(F)$ \icl{is globally convergent,} where $F$ is an arbitrary face of $K$ that contains a $K$-restricted saddle point $\mathbf{\bar{z}}$. By translation of coordinates we may assume that $\mathbf{\bar{z}}=\mathbf{0}$. By the product structure of $K$, $V=\affinespan(F)$ must also decompose into $V=V_x\times V_y$ with $V_x\subseteq\mathbb{R}^{n}$ and $V_y\subseteq\mathbb{R}^m$ affine subspaces.
 Let the orthogonal projection matrices onto $V_x,V_y$, which exist as $(0,0)\in V_x\times V_y$, be $P,Q$ respectively. Then the subgradient method on $V$, satisfies, for $(x,y)\in V$,
 \begin{equation}
 \begin{aligned}
 \dot{x}&=P\varphi'_x=\varphi^V_x,&\dot{y}&=-Q\varphi'_y=-\varphi^V_y
 \end{aligned}
 \end{equation}
 where $\varphi^V(x,y):=\varphi(Px,Qy)$.
 By a rotation\footnote{\icl{Note that a rotation of coordinates will transform $\phi'$ in  \eqref{constraint-modification-method-assumptions} to a function that is still of the form specified in  \eqref{constraint-modification-method-assumptions}, i.e. in the new coordinates $\phi'$ can be written in terms of functions $U$, $g$, $\psi$ that satisfy the conditions in  \eqref{constraint-modification-method-assumptions}.}} of coordinate bases we may assume that $V_x=\mathbb{R}^{n'}\times\{0\}$ and $V_y=\mathbb{R}^{m'}\times\{0\}$ for some $n'\le n$ and $m'\le m$. Then $\varphi^V:\mathbb{R}^{n'}\times\mathbb{R}^{m'}\to\mathbb{R}$ is of the form \eqref{constraint-modification-method-assumptions} and \autoref{prop:constraint-modification-method} gives convergence.
 \end{proof}


\icl{
\subsection{Multi-path congestion control}
 \begin{proof} [Proof of \autoref{prop:algebriac-instability}]
 The \emph{if} claim follows directly from the discussion {preceding the proposition}. For the \emph{only if} we explicitly construct a trajectory that does not converge. Let $u$ satisfy \eqref{eq:algebriac-instability-condition}, then it can be directly verified that
 \begin{equation*}
 \mathbf{z}(t)=\bar{\mathbf{z}}+ce^{t\mathbf{A}(\bar{\mathbf{z}})}\begin{bmatrix}
 u\\-Au
 \end{bmatrix}
 \end{equation*}
 is a solution (for any $c>0$) of the unconstrained gradient method \eqref{gradmethod-fullspace} applied to $\varphi$. By taking $c$ small enough using the fact that $\bar{\mathbf{z}}>0$ (and the skew-symmetry of $\mathbf{A}(\bar{\mathbf{z}})$) we can ensure that $\mathbf{z}(t)>0$ for all $t\in\mathbb{R}$, and hence $\mathbf{z}(t)$ is also a solution of the subgradient dynamics \eqref{eq:multi-path-routing-dynamics}.
 \end{proof}
}

\end{document}